\newcommand{\ignore}[1]{}
\DeclareRobustCommand{\stirling}{\genfrac\{\}{0pt}{}}
\newcommand{\ut}[1]{\underaccent{\tilde}{#1}}
\renewcommand{\vec}[1]{\ut{#1}}
\definecolor{armygreen}{rgb}{0.29, 0.8, 0.13}
\definecolor{auburn}{rgb}{0.43, 0.21, 0.1}
\definecolor{burgundy}{rgb}{0.5, 0.0, 0.13}
\definecolor{medium red}{rgb}{.490,.298,.337}
\definecolor{dark red}{rgb}{.235,.141,.161}
\numberwithin{equation}{section}
\newtheorem{theorem}{Theorem}[section]
\newtheorem{lemma}[theorem]{Lemma}
\newtheorem{observation}[theorem]{Observation}
\newtheorem{claim}[theorem]{Claim}
\newcommand\Mycomb[2][^n]{\prescript{#1\mkern-0.5mu}{}C_{#2}}
\newcommand\Item[1][]{%
  \ifx\relax#1\relax  \item \else \item[#1] \fi
  \abovedisplayskip=0pt\abovedisplayshortskip=0pt~\vspace*{-\baselineskip}}
\theoremstyle{definition}
\theoremstyle{definition}
\newtheorem{remark}[theorem]{Remark}
\DeclareMathOperator*{\argmax}{argmax} 
\newcommand{\uti}{\undertilde{v}}
\newcommand{\bt}{\bar{t}}
\newcommand{\h}{\hat{t}}
\newcommand{\balpha}{\bar{\alpha}}
\newcommand{\bbeta}{\bar{\beta}}
\newcommand{\halpha}{\hat{\alpha}}
\newcommand{\hbeta}{\hat{\beta}}
\newcommand{\talpha}{\tilde{\alpha}}
\newcommand{\tbeta}{\tilde{\beta}}
\title[Game Theoretic Epidemic Model]{The spread of an epidemic: a game-theoretic approach}
\date{}
\author{Sayar Karmakar, Moumanti Podder, Souvik Roy, Soumyarup Sadhukhan}
\address{Sayar Karmakar, University of Florida, 230 Newell Drive, Gainesville, Florida 32603, USA.}
\address{Moumanti Podder, Indian Institute of Science Education and Research (IISER) Pune, Dr.\ Homi Bhabha Road, Pashan, Pune 411008, Maharashtra, India.}
\address{Souvik Roy, Indian Statistical Institute, 203 Barrackpore Trunk Road, Kolkata 700108, West Bengal, India.}
\address{Soumyarup Sadhukhan, Indian Institute of Technology, Kalyanpur, Kanpur, Uttar Pradesh 208016, India.}
\email{sayarkarmakar@ufl.edu}
\email{moumanti@iiserpune.ac.in}
\email{souvik.2004@gmail.com}
\email{soumyarup.sadhukhan@gmail.com}
\begin{document}
\bibliographystyle{plainnat}

\begin{abstract}
We introduce and study a model stemming from game theory for the spread of an epidemic throughout a given population. Each agent is allowed to choose an action whose value dictates to what extent they limit their social interactions, if at all. Each of them is endowed with a certain amount of immunity such that if the viral risk/exposure is more than that they get infected. We consider a discrete-time stochastic process where, at the beginning of each epoch, a randomly chosen agent is allowed to update their action, which they do with the aim of maximizing a utility function that is a function of the state in which the process is currently in. The state itself is determined by the subset of infected agents at the beginning of that epoch, and the most recent action profile of all the agents. Our main results are concerned with the limiting distributions of both the cardinality of the subset of infected agents and the action profile as time approaches infinity, considered under various settings (such as the initial action profile we begin with, the value of each agent's immunity etc.). We also provide some simulations to show that the final asymptotic distributions for the cardinality of infected set are almost always achieved within the first few epochs. 
\end{abstract}


\keywords{game theoretic model; spread of infectious diseases in networks; spread of an epidemic; utility functions}

\maketitle

\section{Introduction}\label{sec:intro}
\subsection{Overview of the paper}\label{subsec:overview} The primary motivation that fuels our work in this paper is the need to understand how an infectious disease spreads through a population comprising intelligent, pragmatically thinking individuals who decide upon their actions (such as distancing oneself from possibly infected acquaintances via voluntary confinement to one's home) on a day-to-day basis, with the aim to maximize their respective \emph{utility functions}. The key novelty of our work lies in being able to capture, via our model, the fact that the population we consider is made up of rational beings referred to as \emph{agents} or \emph{players}. We emphasize here the need for investigation in understanding the spread of a contagion through a population whose members are not just helpless entities exposed to the infection at the whim of nature alone (see \S\ref{subsec:lit_review} for a brief discussion of the existing literature on models devised for studying the spread and control of epidemics, based on game theory). 

Our model is firmly based on the premise of game theory, constituting a population $N = \{1, 2, \ldots, n\}$ of $n$ agents, each of whom is allowed to choose from a set $A = [0,1]$ of available \emph{actions}. Choosing action $0$ is equivalent to the agent confining themselves to their home and coming in contact with no other agent, whereas choosing action $1$ is tantamount to the agent going about their day as usual, with no restrictions imposed. An \emph{action profile} $a_N=(a_1,\ldots,a_n)$ is an element of the set $A^n$, with $a_{i}$ indicating the most recent action undertaken by agent $i$, for each $i \in N$. The agents are represented by the vertices of an undirected weighted graph, and the \emph{interaction} between agent $i$ and agent $j$, for distinct $i, j \in N$, is captured by the weight $g_{i,j} \in [0,1]$ of the edge connecting the vertices $i$ and $j$. We further endow agent $i$, for each $i \in N$, with an \emph{immunity power} $\tau(i) \in (0,1)$. We consider a discrete-time stochastic process indexed by $\mathbb{N}_{0}$, the set of all non-negative integers. At the beginning of the $t$-th \emph{epoch} of time, for each $t \in \mathbb{N}_{0}$, an agent $\undertilde{v}_{t}$ is chosen uniformly randomly out of $N$ and permitted to update their action. The chosen agent decides upon their action by taking stock of the \emph{state} the process is in at the beginning of that epoch, and their own \emph{utility function}, both of which are formally defined in \S\ref{sec:model}. We mention here that the state $S_{t}$ of the process, at the start of epoch $t \in \mathbb{N}_{0}$, is made up of two crucial components:
\begin{enumerate*}
\item the set $I(S_{t})$ comprising all the agents who have been infected up to and including epoch $t-1$,
\item and the action profile $a_{N}(S_{t})$ of the agents at the beginning of epoch $t$. 
\end{enumerate*}

The process mentioned above shall, henceforth, be referred to as the \emph{stochastic virus spread  process} (SVSP). In addition, we shall consider, for some of our preliminary investigations of the SVSP, a \emph{deterministic virus spread process} (DVSP) (see \S\ref{sec:model} for a more formal definition) in which the sequence $\undertilde{v} = (\undertilde{v}_{t}: t \in \mathbb{N}_{0})$ of agents is specified fully (i.e.\ the agent $\undertilde{v}_{t}$ chosen to update their action at the start of epoch $t$, for each $t \in \mathbb{N}_{0}$, is predetermined, and \emph{not} random). 

The principal questions we aim to answer in this work are those concerning the limiting distribution of the infected set $I(S_{t})$ and the limiting distribution of the action profile $a_{N}(S_{t})$ of all agents concerned, as $t \rightarrow \infty$, provided such limits exist. Such questions are pertinent not just theoretically, but also from a very practical perspective in that, in any country, the departments under the federal government that are tasked with overseeing the provision of healthcare for the population must be able to reliably predict the approximate proportion of citizens to get infected in the long run (i.e.\ when the epidemic has continued for a considerably long duration). This is necessary because such knowledge can aid in the decision of how much resources (medicines and medical equipment, hospital beds etc.) to set aside for the treatment of infected patients in the long run. The investigation of the limiting behaviour of the action profile $a_{N}(S_{t})$ as $t \rightarrow \infty$ goes on to reveal how, when such a limit exists, individuals in a population typically tend to behave once the epidemic has prevailed for a sufficiently long time. 


\subsection{A brief review of pertinent literature}\label{subsec:lit_review}
The classical \emph{compartmental models} of epidemiology (see \cite{brauer2008compartmental} for a comprehensive survey) date as far back as the early 1900s (see \cite{ross1917application}). Some of the most notable ones out of these are \emph{Susceptible-Infectious-Removed} (SIR) model (see \cite{kermack1927contribution}), the \emph{Susceptible-Infectious-Susceptible} (SIS) model (see \cite{hethcote1973asymptotic}) and the \emph{Susceptible-Exposed-Infectious-Removed} (SEIR) model (see \cite{aron1984seasonality}). In the recent years, \emph{network models} have become more popular, with the vertices or \emph{nodes} of a network representing the individuals of a population under consideration, and the edge between any two distinct nodes denoting the relationship or interaction between the two individuals those nodes represent, in such models (for instance, see \cite{newman2002spread}, \cite{pastor2015epidemic}, \cite{craig2020improving}, \cite{keeling2005networks}, \cite{lang2018analytic}, \cite{maheshwari2020network}, \cite{small2010complex}, \cite{sah2021revealing}, \cite{azizi2020epidemics}, \cite{cui2021network}, \cite{browne2021infection} etc.).

We now begin a discussion of research articles that are closely aligned in flavour with our work in this paper. We begin with \cite{aurell2022finite}, which investigates a game for a continuum of non-identical players evolving on a finite state space, with their heterogeneous interactions with other players represented via a \emph{graphon} (viewed as the limit of a dense random graph). A player's transition rates between the states depend on their control and the strength of their interaction with other players. Sufficient conditions for the existence of Nash equilibria are studied in \cite{aurell2022finite}, and the existence of solutions to a continuum of fully coupled forward-backward ordinary differential equations characterizing the Nash equilibria is proved. In \cite{vizuete2020graphon}, spectral properties of graphons are used to study stability and sensitivity to noise of deterministic SIS epidemics over large networks. In particular, the presence of additive noise in a linearized SIS model is considered and a noise index is derived to quantify the deviation from the disease-free state due to noise. 

In the next couple of paragraphs, we focus on citing a few of the articles out of the vast literature that concerns itself with applying the theory of mean field games to the study of the spread of an epidemic throughout a population. In \cite{aurell2022optimal}, motivated by models of epidemic control in large populations, a \emph{Stackelberg mean field game} model between a principal and a mean field of agents evolving on a finite state space is considered, with the agents playing a non-cooperative game in which they can control their transition rates between states to minimize individual costs. An application is then proposed to an epidemic model of the SIR type in which the agents control their interaction rate and the principal is a regulator acting with non pharmaceutical interventions. In \cite{lee2021controlling}, a mean-field game model in controlling the propagation of epidemics on a spatial domain is introduced, with the control variable being the spatial velocity (introduced at first for the classical disease models, such as SIR), and fast numerical algorithms based on proximal primal-dual methods are provided. In \cite{lee2022mean}, a mean-field variational problem in a spatial domain, controlling the propagation of a pandemic by the optimal transportation strategy of vaccine distribution, is investigated. In \cite{olmez2022modeling}, an agent's decision as to whether to be socially active in the midst of an epidemic is modeled as a mean-field game with health-related costs and activity-related rewards. By considering the fully and partially observed versions of this problem, the role of information in guiding an agent's rational decision is highlighted. In \cite{olmez2022does}, how the evolution of an infectious disease in a large heterogeneous population is governed by the self-interested decisions (to be socially active) of individual agents is studied based on a mean-field type optimal control model. The model is used to investigate the role of partial information on an agent's decision-making, and study the impact of such decisions by a large number of agents on the spread of the virus in the population. 

In \cite{cho2020mean}, a mean-field game model is proposed in which each of the agents chooses a dynamic strategy of making contacts, given the trade-off of gaining utility but also risking infection from additional contacts. Both the \emph{mean-field equilibrium strategy}, which assumes that each agent acts selfishly to maximize their own utility, and the \emph{socially optimal strategy}, which maximizes the total utility of the population, are computed and compared with each other. An additional cost is also included as an incentive to the agents to change their strategies, when computing the socially optimal strategies. The \emph{price of anarchy} of this system is computed to understand the conditions under which large discrepancies between the mean-field equilibrium strategies and the socially optimal strategies arise, which is when intervening public policy would be most effective. In \cite{doncel2022mean}, a mean field game model of SIR dynamics is proposed in which players choose when to get vaccinated. It is shown that this game admits a unique mean-field equilibrium that consists of vaccinating aggressively at a maximal rate for a certain amount of time and then not vaccinating, and it is shown that this equilibrium has the same structure as the vaccination strategy that minimizes the total cost. A very similar problem is studied in \cite{gaujal2021vaccination} that focuses on a virus propagation dynamics in a large population of agents, with each agent being in one of three possible states (namely, susceptible, infected and recovered) and with each agent allowed to choose when to get vaccinated. It is shown that this system admits a unique symmetric equilibrium when the number of agents goes to infinity, and that the vaccination strategy that minimizes the social cost has the same threshold structure as the mean field equilibrium, though the latter has a shorter threshold. In \cite{hubert2018nash}, the newborn, non-compulsory vaccination in an SIR model with vital dynamics is studied, with the evolution of each individual modeled as a Markov chain and their decision to vaccinate aimed at optimizing a criterion depending on the time-dependent aggregate (societal) vaccination rate and the future epidemic dynamics. The existence of a Nash mean field game equilibrium among all individuals in the population is established. In \cite{laguzet2015individual}, techniques from the mean field game theory are used to examine whether, in an SIR model, egocentric individuals (i.e.\ whose actions are driven by self-interest when it comes to getting vaccinated) can reach an equilibrium with the rest of the society, and it is shown that an equilibrium exists. The individual best vaccination strategy (with as well as without discounting) is completely characterized, a comparison is made with a strategy based only on overall societal optimization, and a situation with a non-negative price of anarchy is exhibited. In \cite{laguzet2016equilibrium}, individual optimal vaccination strategies in an SIR model are analyzed. It is assumed that the individuals vaccinate according to a criterion taking into account the risk of infection, the possible side effects of the vaccine and the overall epidemic course, that the vaccination capacity is limited, and that the individual discounts the future at a given positive rate. Under these assumptions, an equilibrium between the individual decisions and the epidemic evolution is shown to exist. In \cite{salvarani2018optimal}, a model of agent-based vaccination campaign against influenza with imperfect vaccine efficacy and durability of protection is considered. The existence of a Nash equilibrium is proved and a novel numerical method is proposed to find said equilibrium. Various aspects of the model are also discussed, such as the dependence of the optimal policy on the imperfections of the vaccine, the best vaccination timing etc. 

In \cite{hubert2022incentives}, a general mathematical formalism is introduced to study the optimal control of an epidemic via incentives to lockdown and testing, and the interplay between the government and the population, while an epidemic is spreading according to the dynamics given by a stochastic SIS model or a stochastic SIR model, is modeled as a principal–agent problem with moral hazard. Although, to limit the spread of the virus, individuals within a given population can choose to reduce interactions among themselves, this cannot be perfectly monitored by the government and it comes with certain social and monetary costs for the population. One way to mitigate such costs and encourage social distancing, lockdown etc.,\ is to put in place an incentive policy in the form of a tax or subsidy.  In addition, the government may also implement a testing policy in order to know more precisely the spread of the epidemic within the country, and to isolate infected individuals. It is verified via numerical results that if a tax policy is implemented, the individuals in the population are encouraged to significantly reduce interactions among themselves, and if the government also adjusts its testing policy, less effort is required on the part of the population to enforce social distancing, lockdown upon itself, and the epidemic is largely contained by the targeted isolation of positively-tested individuals. In \cite{cooney2022social}, a model for the evolution of sociality strategies in the presence of both a beneficial and costly contagion is investigated, and a social dilemma is identified in that the evolutionarily-stable sociality strategy is distinct from the collective optimum (i.e.\ the level of sociality that would be best for all individuals) -- in particular, the level of social interaction in the former is greater (respectively less) than the social optimum when the good contagion spreads more (respectively less) readily than the bad contagion. Finally, we cite \cite{roy2022recent}, which provides a state-of-the-art update on recent advances in the mean field approach that can be used very effectively in analyzing a dynamical modeling framework, known as a continuous time Markov decision process, for epidemic modeling and control. 

 \subsection{Organization of the paper}\label{subsec:org} The model that we investigate in this paper, along with all the pertinent definitions, has been described formally in \S\ref{sec:model}, although we did allude to it briefly in \S\ref{sec:intro}. \S\ref{sec:model} also includes some observations and lemmas concerning the the deterministic virus spread process (also mentioned previously in \S\ref{sec:intro}). The main results of this paper, namely Theorems~\ref{theo_1}, \ref{thm_ac_1}, \ref{theo_1.5}, \ref{thm_ac_1.5}, \ref{theo_2}, \ref{thm_ac_3}, \ref{theo_3}, \ref{thm_ac_4}, \ref{theo_4} and \ref{thm_ac_5} are stated in \S\ref{sec:results}, along with relevant discussions regarding the conclusions drawn from them. Simulations exploring the cardinality of the infected set for the first several epochs of the process, thereby yielding good approximations to the limit that it converges to, are given in \S\ref{sec:simulation}. A summary of what we have been able to achieve in this paper, along with directions of research on this as well as related topics that we wish to pursue in the future, is provided in \S\ref{sec:discussion}. The proofs of the main results (that have been stated in  \S\ref{sec:results}) are deferred to \S\ref{sec:appendix}, the Appendix.
 


\section{Formal description of the model}\label{sec:model}
\noindent Recall, from the second paragraph of \S\ref{subsec:overview}, the brief introduction to the model we consider in this paper. Here, we formalize the model by providing mathematical definitions to the crucial quantities involved in it.

The process described in \S\ref{subsec:overview} is said to be in the \emph{state} $S = (I,a_{N})$ if $I \subseteq N$ denotes the set of infected agents and $a_N$ denotes the action profile at that time. Given a state $S$, we denote by $I(S)$ the corresponding set of infected agents, and by $a_{\widehat{N}}(S) = (a_{i}(S): i \in \widehat{N})$ the tuple in which $a_{i}(S)$ represents the action of the $i$-th agent for all $i \in \widehat{N}$, for any subset $\widehat{N}$ of $N$. In particular, if $\widehat{N} = N \setminus \{j\}$, we abbreviate the notation $a_{N \setminus \{j\}}(S)$ by $a_{-j}(S)$, and for any $a \in A$, we denote by $(a \vee a_{-j}(S))$ the tuple $(a_{1}(S), \ldots, a_{i-1}(S), a, a_{i+1}(S), \ldots, a_{n}(S))$. We denote by $\mathcal{S}$ the set of all possible states. 

The viral exposure $r_i(S)$ that agent $i$ is subjected to, when the process is in state $S$, is defined as
\[
 r_i(S) = 
  \begin{cases} 
   \left(\frac{\sum_{j\in I\setminus \{i\}}g_{ij}a_j(S)}{\sum_{j\in N\setminus \{i\}}g_{ij}a_j(S)}\right) & \text{if } \sum_{j\in N\setminus \{i\}}g_{ij}a_j(S) \neq 0, \\
   0 & \text{if } \sum_{j\in N\setminus \{i\}}g_{ij}a_j(S) = 0.
  \end{cases}
\]
Such a definition can be motivated is as follows: the sum $\sum_{j\in N\setminus \{i\}}g_{ij}a_j(S)$ equals $0$ if and only if, for every $j \in N \setminus \{i\}$, either $g_{i,j} = 0$, thereby indicating that there is no interaction whatsoever between agent $i$ and agent $j$, or else the action $a_{j}(S) = 0$, which indicates that agent $j$ has chosen to socially distance themselves from all other agents -- in either of these two scenarios, agent $i$ runs no risk of being infected by agent $j$. Therefore, the viral exposure agent $i$ is subjected to is $0$ whenever $\sum_{j\in N\setminus \{i\}}g_{ij}a_j(S) = 0$. 

The utility function of agent $i$, when the process is in state $S$, is defined as
\begin{equation}\label{ut}
 u_i(S) = 
  \begin{cases} 
   1+f(a_i(S)) & \text{if } i\notin I(S) \text{ and }  a_i(S) r_i(S)\leqslant \tau(i), \\
   f(a_i(S)) & \text{if either } i\in I(S) \text{ or } a_i(S) r_i(S)> \tau(i)
  \end{cases}
\end{equation}
where $f:[0,1] \to [0,1]$ is a strictly increasing function. Intuitively, if agent $i$ is neither already infected during the current epoch (which is indicated by the condition $i \notin I(S)$) nor runs the risk of being infected in the next epoch (which is indicated by the condition $a_i(S) r_i(S)\leqslant \tau(i)$, i.e.\ their action multiplied by the viral exposure they have been subjected to does not exceed their immunity power), they enjoy a `reward' of amount 1 in addition to the utility $f(a_{i}(S))$ that they receive because of their chosen action (note that the strictly increasing nature of $f$ ensures that the more they go out in society, the more utility they get). Else, they are deprived of such a reward and must settle for the utility value $f(a_{i}(S))$. 

We now formally describe how agent $i$ responds if they are chosen to update their action at the beginning of an epoch when the system is in state $S$. We call this the \emph{best response} by agent $i$ at state $S$, denoted $b_i(S)$, and it is defined as
\begin{equation}
b_i(S)=\argmax_{a \in [0,1]} u_i\left(I(S),(a \vee a_{-i}(S))\right).\label{best_response}
\end{equation}
In words, this is the set of actions $a$ by agent $i$ that allow them to maximize their utility function (note that the utility function, as defined in \eqref{ut}, is a function of the state, and the state here constitutes $I(S)$ as the infected set and $(a \vee a_{-i}(S))$ as the action profile). 

We now summarize the stochastic process we focus on in this paper. We denote by $S_{t} = (I(S_{t}), a_{N}(S_{t}))$ the state of the process at the start of epoch $t$, for $t \in \mathbb{N}_{0}$. Let $\undertilde{v}_{t}$ denote the agent chosen uniformly randomly out of $N$ at the start of epoch $t$, and we define the infinite sequence $\undertilde{v} = (\undertilde{v}_{t}: t \in \mathbb{N}_{0}) \in N^{\mathbb{N}_0}$. Given $\undertilde{v}_{t} = i$, agent $i$ updates her action by playing the best response $b_{i}(S_{t})$ as defined in \eqref{best_response}. We are now at an \emph{intermediate} state $\hat{S}_{t} = \left(I(S_{t}), (b_{i}(S_{t}) \vee a_{-i}(S_{t}))\right)$ that may be imagined to occur at the midpoint of epoch $t$. We now update the infected set to $I(S_{t+1}) = I(S_{t}) \cup \{j: a_{j}(\hat{S}_{t})r_{j}(\hat{S}_{t}) > \tau(j)\}$, and finally, at the end of epoch $t$ and the beginning of epoch $t+1$, the process is in state $S_{t+1} = \left(I(S_{t+1}), (b_{i}(S_{t}) \vee a_{-i}(S_{t}))\right)$ (which tells us that $a_{N}(S_{t+1}) = (b_{i}(S_{t}) \vee a_{-i}(S_{t}))$).

The following lemma summarizes the best response of an agent at a state depending on whether or not they are infected at that state. 
\begin{lemma}\label{rem_0}
	Let $i \in N$ be an agent and $S \in \mathcal{S}$ be a state. Then, 
	 \[
	 b_i(S) = 
  \begin{cases} 
   1 & \text{if } i\in I(S), \\
   1 & \text{if } i\notin I(S) \text{ and } r_i(S)=0, \\
   \min \left\{1,\frac{\tau(i)}{r_i(S)}\right\} & \text{if } i\notin I(S) \text{ and } r_i(S)\neq 0.
  \end{cases} 
	 \]
	 
	
	
\end{lemma}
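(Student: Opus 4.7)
The plan is to do a straightforward case analysis based on the definition of the utility function in \eqref{ut}, exploiting the key observation that $r_i(S)$ depends on the action profile only through $a_{-i}(S)$ (since the ratio defining $r_i(S)$ does not involve $a_i(S)$). Consequently, when we vary $a$ in the expression $u_i(I(S),(a\vee a_{-i}(S)))$, the quantity $r_i$ stays fixed, and the dichotomy in \eqref{ut} is governed solely by whether $i \in I(S)$ and whether $a\, r_i \leqslant \tau(i)$.

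First, I would dispose of Case 1, where $i \in I(S)$. Here the reward of $1$ never accrues to agent $i$ regardless of $a$, so $u_i(I(S),(a\vee a_{-i}(S))) = f(a)$. Since $f$ is strictly increasing on $[0,1]$, the maximum is attained uniquely at $a = 1$, giving $b_i(S) = 1$. Next, in Case 2, where $i \notin I(S)$ and $r_i(S) = 0$, the condition $a\, r_i(S) = 0 \leqslant \tau(i)$ holds for every $a \in [0,1]$, so the utility is $1 + f(a)$ throughout $[0,1]$; strict monotonicity of $f$ again forces $b_i(S) = 1$.

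The substantive case is Case 3, where $i \notin I(S)$ and $r_i(S) \neq 0$. I would split $[0,1]$ at the threshold $a^\ast := \tau(i)/r_i(S)$. On the subinterval where $a \leqslant a^\ast$ (intersected with $[0,1]$) the utility equals $1 + f(a)$, which is strictly increasing and therefore maximized at $a = \min\{1, a^\ast\}$ with value $1 + f(\min\{1, a^\ast\})$. If $a^\ast \geqslant 1$, this exhausts $[0,1]$ and we are done with value $b_i(S) = 1$. Otherwise, on the subinterval $(a^\ast, 1]$ the utility equals $f(a)$, maximized at $a = 1$ with value $f(1)$. Here the main (but mild) obstacle is to compare the two local maxima; I would invoke the hypothesis that $f$ maps $[0,1]$ into $[0,1]$, giving $f(1) \leqslant 1 \leqslant 1 + f(a^\ast)$, so the first branch strictly dominates and $b_i(S) = a^\ast = \tau(i)/r_i(S)$. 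Combining both subcases of Case 3 yields $b_i(S) = \min\bigl\{1, \tau(i)/r_i(S)\bigr\}$, matching the statement.

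Putting the three cases together recovers the piecewise formula in the lemma. The only delicate point in the whole argument is the comparison $f(1) \leqslant 1 + f(a^\ast)$ in Case 3, which relies crucially on the codomain of $f$ being $[0,1]$ rather than an arbitrary range; everything else is a direct consequence of the strict monotonicity of $f$ and the fact that $r_i$ is independent of $a_i$.
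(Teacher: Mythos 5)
Your proposal is correct and follows essentially the same three-case argument as the paper's own proof, including the same splitting of $[0,1]$ at $\tau(i)/r_i(S)$ and the same comparison of the two local maxima in Case 3. The one point the paper handles more carefully is that the dominance $1+f\left(\min\left\{1,\tau(i)/r_i(S)\right\}\right)>f(1)$ must be \emph{strict} (so that the best response is unique): your chain $f(1)\leqslant 1\leqslant 1+f(a^\ast)$ only yields a non-strict inequality, and the missing ingredient is that $\tau(i)>0$ gives $a^\ast>0$, whence $f(a^\ast)>f(0)\geqslant 0$ by strict monotonicity and the codomain assumption.
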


\noindent The proof of this lemma can be found in Appendix \ref{sec:appendix_A}. There are two key messages to take away from Lemma \ref{rem_0}. The first of these is that the best response of an agent $i$ at any state $S$ is unique, which is why we have, henceforth, presented $b_i(S)$ as an element of $A$ (which is more convenient than writing it as a singleton subset of $A$). The second is that once an agent is infected or runs no risk of becoming infected (i.e.\ the viral exposure is $0$), they choose to go out with no restrictions imposed on their movements.

%
		

Although we alluded to it in \S\ref{sec:intro}, we recall here the definition of the deterministic virus spread process (DVSP). Given a (deterministic) agent sequence $\undertilde{v}$, the DVSP $S = (S_{t}: t \in \mathbb{N}_{0})$ induced by $\undertilde{v} = (\uti_{t}: t \in \mathbb{N}_{0})$, with $S_{t}$ indicating the state of the process just before epoch $t$ commences, is defined in a manner identical to the stochastic virus spread process (SVSP) described above, with the only difference being that, instead of choosing an agent randomly at the start of each epoch, the agent $\uti_t$ is chosen at the start of epoch $t$ to update their action, for each $t \in \mathbb{N}_{0}$.
Whenever the agent sequence $\uti$ is not clear from the context, we shall denote the DVSP $S$ (induced by $\undertilde{v}$) by $S(\undertilde{v})$ to emphasize its dependence on $\uti$. In this case, $S_{t}(\undertilde{v})$ will denote the state of the process at the start of epoch $t$. 

In what follows, we make a few observations about the DVSP $S(\uti)$ that we shall use frequently throughout the paper.  
\begin{observation}\label{rem_0.25}
	Let $S$ be the DVSP induced by $\undertilde{v}$. Then, for all $t\in \mathbb{N}_0$, 
	\begin{enumerate}[(i)]
		\item $I(S_t)=I(\hat{S}_t)$ and $a_{N}(\hat{S}_t)=a_{N}(S_{t+1})$, 
		\item if $b_{\uti_t}(S_t)=a_{\uti_t}(S_t)$, then $S_t=\hat{S}_t=S_{t+1}$, 
		\item if $I(S_t)=I(S_{t+1})$, then $\hat{S}_t=S_{t+1}$.
	\end{enumerate}
\end{observation}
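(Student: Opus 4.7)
All three claims in Observation~\ref{rem_0.25} are in essence bookkeeping from the definitions of $\hat{S}_t$ and $S_{t+1}$ given in \S\ref{sec:model}, namely $\hat{S}_t = (I(S_t),\, (b_{\uti_t}(S_t) \vee a_{-\uti_t}(S_t)))$ and $S_{t+1} = (I(S_{t+1}),\, (b_{\uti_t}(S_t) \vee a_{-\uti_t}(S_t)))$. I would handle them in the natural order (i), (iii), (ii), reusing the preceding parts as I go.

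For (i), reading off the two coordinates of $\hat{S}_t$ directly yields $I(\hat{S}_t) = I(S_t)$ and $a_N(\hat{S}_t) = (b_{\uti_t}(S_t) \vee a_{-\uti_t}(S_t))$, and the right-hand side of the second identity coincides with $a_N(S_{t+1})$ by the definition of $S_{t+1}$. For (iii), part (i) already supplies $a_N(\hat{S}_t) = a_N(S_{t+1})$, so the only coordinate left to match is the infected set; the hypothesis $I(S_t) = I(S_{t+1})$ together with $I(\hat{S}_t) = I(S_t)$ forces $I(\hat{S}_t) = I(S_{t+1})$, and hence $\hat{S}_t = S_{t+1}$.

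For (ii), the hypothesis $b_{\uti_t}(S_t) = a_{\uti_t}(S_t)$ lets me collapse the updated action profile $(b_{\uti_t}(S_t) \vee a_{-\uti_t}(S_t))$ to $a_N(S_t)$; combining this with $I(\hat{S}_t) = I(S_t)$ from (i) gives $\hat{S}_t = S_t$ immediately. To also conclude $\hat{S}_t = S_{t+1}$, by (iii) it suffices to verify $I(S_t) = I(S_{t+1})$, which in view of $\hat{S}_t = S_t$ reduces via the update rule to the inclusion
\[
\{j : a_j(S_t)\, r_j(S_t) > \tau(j)\} \subseteq I(S_t).
\]
This inclusion is the only step that is not purely syntactic. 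I would establish it as an invariant of the DVSP propagated from the initial state, arguing by induction on $t$: at each epoch Lemma~\ref{rem_0} forces the moving agent's best response to satisfy $b_{\uti_t}(S_t)\, r_{\uti_t}(S_t) \leq \tau(\uti_t)$, so the moving agent is never newly infected, while any non-moving uninfected agent satisfied the threshold inequality at the previous epoch by the inductive hypothesis.

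The main obstacle I anticipate is precisely in carrying this invariant across an epoch in which the infected set strictly grows, since $r_j(S_{t+1})$ can strictly exceed $r_j(\hat{S}_t)$ when a new infected neighbour $k$ gets added to the numerator of $r_j$ even though the whole action profile is unchanged. I expect the argument to require either a consistency assumption on $S_0$ (that $a_j(S_0) r_j(S_0) \leq \tau(j)$ for $j \notin I(S_0)$) or a careful decomposition of the added contribution $g_{jk} a_k(\hat{S}_t)$ in terms of quantities already bounded by $\tau(j)$; this is where the proof will require the most care.
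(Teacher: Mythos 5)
The paper supplies no proof of this Observation: it is presented as immediate from the definitions $\hat{S}_t=\left(I(S_t),(b_{\uti_t}(S_t)\vee a_{-\uti_t}(S_t))\right)$ and $S_{t+1}=\left(I(S_{t+1}),(b_{\uti_t}(S_t)\vee a_{-\uti_t}(S_t))\right)$. For part (i), part (iii), and the identity $S_t=\hat{S}_t$ in part (ii), your coordinate-by-coordinate bookkeeping is exactly the intended argument and is complete.

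The obstacle you flag in the remaining half of (ii) is, however, not something a more careful decomposition can remove: the inclusion $\{j: a_j(S_t)r_j(S_t)>\tau(j)\}\subseteq I(S_t)$ is genuinely not an invariant of the DVSP, and part (ii) is false as stated. Concretely, take $g_{ij}=1$, $\tau(j)=\tau<\frac{1}{n-1}$ for all $j$, $I(S_0)=\{1\}$, $a_j(S_0)=1$ for all $j$, and $\uti_0=1$. By Lemma~\ref{rem_0} the infected agent's best response is $b_1(S_0)=1=a_1(S_0)$, so the hypothesis of (ii) holds and $\hat{S}_0=S_0$; yet every $j\neq 1$ has $a_j(\hat{S}_0)r_j(\hat{S}_0)=\frac{1}{n-1}>\tau$, whence $I(S_1)=N\neq\{1\}$. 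This is precisely Case 1 of the paper's own proof of Theorem~\ref{theo_1.5}, and the same phenomenon recurs in Case 2.2 of the proof of Theorem~\ref{theo_3}, where the chosen agent is infected with action already equal to $1$ and the infected set nevertheless strictly grows at the next epoch. Neither of your proposed repairs suffices: consistency of $S_0$ does not propagate, because whenever $I(S_t)\subsetneq I(S_{t+1})$ the risks $r_j$ of the surviving uninfected agents jump upward with no change in the action profile, which is exactly the wave-like spread the later proofs rely on. The correct version of (ii) requires the additional hypothesis that $a_j(S_t)r_j(S_t)\leqslant\tau(j)$ for every $j\notin I(S_t)$ --- which holds, for instance, when $t\geqslant 1$ and $I(S_{t-1})=I(S_t)$ (then $S_t=\hat{S}_{t-1}$ by part (iii) and the threshold inequalities defining $I(S_t)$ transfer to $S_t$), or under the standing hypotheses of Lemma~\ref{lem_4}; these are the only situations in which the paper actually invokes (ii). Under that extra hypothesis your induction closes immediately. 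So your proof of (i) and (iii) is correct, and the difficulty you anticipated in (ii) is a defect of the statement, not of your argument.
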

\noindent Note here that $\hat{S}_t$ indicates the intermediate state of the process at the midpoint of epoch $t$, for each $t \in \mathbb{N}_{0}$.

\begin{observation}\label{rem_0.5}
	For any fixed $i\in N$, if  $\uti_t\neq i$ for some $t \in \mathbb{N}_0$, then  $a_i(S_t)=a_i(\hat{S}_t)=a_i(S_{t+1})$. By repeated applications of this observation, we are able to conclude the following: if $\uti_t\neq i$ for all $t \in [t',t'']$ with $t'<t''$, then this yields $a_i(S_{t})=a_i(S_{t'})$ for all $t\in [t',t'']$. 
\end{observation}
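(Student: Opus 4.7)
The plan is to read the claim directly off of the definition of one epoch of the DVSP given in the paragraphs preceding Lemma~\ref{rem_0}, with no extra machinery needed. For the first assertion I would fix $i \in N$ and $t \in \mathbb{N}_0$ with $\uti_t \neq i$, set $j := \uti_t$, and recall that by construction
\[
a_N(\hat{S}_t) \;=\; \bigl(b_j(S_t) \vee a_{-j}(S_t)\bigr),
\]
which replaces only the $j$-th coordinate of $a_N(S_t)$ by $b_j(S_t)$ and leaves every other coordinate untouched. Since $i \neq j$, the $i$-th coordinate is not altered, so $a_i(\hat{S}_t) = a_i(S_t)$. The transition from $\hat{S}_t$ to $S_{t+1}$ only augments the infected set; this is exactly Observation~\ref{rem_0.25}(i), which asserts $a_N(\hat{S}_t) = a_N(S_{t+1})$. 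Consequently $a_i(S_{t+1}) = a_i(\hat{S}_t) = a_i(S_t)$, establishing the first statement.

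For the second assertion I would iterate the first via a short induction on $k = t - t'$, running over $k \in \{0, 1, \ldots, t'' - t'\}$. The base case $k = 0$ is immediate. For the inductive step, the hypothesis $\uti_s \neq i$ for every $s \in [t', t'']$ applies in particular at $s = t' + k$, so the first assertion yields $a_i(S_{t' + k + 1}) = a_i(S_{t' + k})$, which combined with the inductive hypothesis gives $a_i(S_{t' + k + 1}) = a_i(S_{t'})$. Thus $a_i(S_t) = a_i(S_{t'})$ for all $t \in [t', t'']$.

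I do not anticipate any genuine obstacle here: the content of the observation is essentially definitional, namely that each epoch updates the action of exactly one agent (the selected $\uti_t$) and that no further action-profile modification occurs thereafter. The only point worth stating with some care is the appeal to Observation~\ref{rem_0.25}(i), which cleanly decouples the action-profile update (which happens at the midpoint of the epoch) from the infection update (which happens at the end), ensuring that the action of any non-selected agent is preserved across the full epoch rather than merely across the first half of it.
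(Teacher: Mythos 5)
Your argument is correct and is exactly the reasoning the paper relies on: the paper states this as an Observation with no written proof, treating it as immediate from the fact that an epoch modifies only the selected agent's coordinate of the action profile (together with Observation~\ref{rem_0.25}(i) for the second half of the epoch). Your write-up simply makes that definitional argument, plus the trivial induction for the interval version, explicit.
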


\begin{observation}	\label{obs_3}
	Since the best response of an infected agent is always $1$ (see Lemma~\ref{rem_0}), $i \in I(S(t))$ and $\uti_t=i$ together imply that $a_{i}(S_{t'})=1$ for all $t'>t$.
\end{observation}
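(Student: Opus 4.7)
The plan is a short induction on $t' \geq t+1$, leaning on the three tools already in hand: Lemma~\ref{rem_0} (an infected agent's best response is $1$), Observation~\ref{rem_0.25}(i) (the action profile after the intermediate state becomes the action profile of the next state), and Observation~\ref{rem_0.5} (an agent whose index is not chosen retains their action).

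\textbf{Base case $t' = t+1$.} Since $i \in I(S_t)$ and $\uti_t = i$, Lemma~\ref{rem_0} yields $b_i(S_t) = 1$. By the definition of the SVSP/DVSP dynamics, the intermediate state is $\hat{S}_t = \bigl(I(S_t),\, (b_i(S_t) \vee a_{-i}(S_t))\bigr)$, so $a_i(\hat{S}_t) = 1$, and Observation~\ref{rem_0.25}(i) then gives $a_i(S_{t+1}) = a_i(\hat{S}_t) = 1$.

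\textbf{Inductive step.} Assume that $a_i(S_{t'}) = 1$ for some $t' > t$; I will show $a_i(S_{t'+1}) = 1$. First note that the update rule $I(S_{s+1}) = I(\hat{S}_s) \cup \{j : a_j(\hat{S}_s) r_j(\hat{S}_s) > \tau(j)\}$ makes the sequence $(I(S_s))_{s \in \mathbb{N}_0}$ monotonically non-decreasing, so $i \in I(S_t) \subseteq I(S_{t'})$. Now split on which agent is chosen at epoch $t'$. If $\uti_{t'} \neq i$, Observation~\ref{rem_0.5} immediately gives $a_i(S_{t'+1}) = a_i(S_{t'}) = 1$. If $\uti_{t'} = i$, then since $i \in I(S_{t'})$, Lemma~\ref{rem_0} gives $b_i(S_{t'}) = 1$, and arguing exactly as in the base case we conclude $a_i(S_{t'+1}) = 1$.

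There is essentially no obstacle: the only substantive point to verify is the monotonicity of $(I(S_s))_s$, which is immediate from the definition. Note also that the argument is insensitive to whether $\undertilde{v}$ is deterministic or random, so the statement applies verbatim to both the DVSP and the SVSP.
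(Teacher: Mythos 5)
Your proof is correct and takes essentially the same route the paper intends: the paper states Observation~\ref{obs_3} without a formal proof, relying on exactly the ingredients you use (Lemma~\ref{rem_0} for the best response of an infected agent, monotonicity of the infected set, and Observation~\ref{rem_0.5} for epochs where $i$ is not chosen). Your induction simply makes that one-line justification explicit, with no gaps.
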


\begin{observation}\label{obs_4}
		Since the best response of an uninfected agent $i$ is 
				 \[b_i(S)=
			\begin{cases} 
  
   1 & \text{if } r_i(S)=0, \\
   \min \left\{1,\frac{\tau(i)}{r_i(S)}\right\} & \text{if }  r_i(S)\neq 0,
  \end{cases} 
\]
			
				
			by Lemma~\ref{rem_0}, hence $\uti_t=i$ and $i\notin I(S_t)$ together imply that $i\notin I(S_{t+1})$ as well.
\end{observation}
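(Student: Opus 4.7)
The plan is to trace through exactly what happens to agent $i$ when she is selected and updates her action according to the best response formula of Lemma~\ref{rem_0}. The crucial structural observation is that the viral exposure $r_i(S)$ depends only on $\{a_j(S): j \in N \setminus \{i\}\}$ and on $I(S)$, neither of which changes when we pass from $S_t$ to $\hat{S}_t$ (since only agent $i$'s own action is updated, and by Observation~\ref{rem_0.25}(i) we have $I(\hat{S}_t) = I(S_t)$). Consequently $r_i(\hat{S}_t) = r_i(S_t)$.

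Next, I would compute $a_i(\hat{S}_t) \cdot r_i(\hat{S}_t) = b_i(S_t) \cdot r_i(S_t)$ by splitting into the two cases provided by Lemma~\ref{rem_0} for the uninfected agent $i$. In the case $r_i(S_t) = 0$, the product is trivially $0 \leqslant \tau(i)$ since $\tau(i) \in (0,1)$. In the case $r_i(S_t) \neq 0$, the best response is $\min\{1, \tau(i)/r_i(S_t)\}$, so
\begin{equation*}
b_i(S_t) \cdot r_i(S_t) \;=\; \min\bigl\{r_i(S_t),\, \tau(i)\bigr\} \;\leqslant\; \tau(i).
\end{equation*}

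Finally, recall that by definition $I(S_{t+1}) = I(S_t) \cup \{j : a_j(\hat{S}_t) r_j(\hat{S}_t) > \tau(j)\}$. Since $i \notin I(S_t)$ by hypothesis and we have just shown $a_i(\hat{S}_t) r_i(\hat{S}_t) \leqslant \tau(i)$, agent $i$ does not belong to either set in the union, giving $i \notin I(S_{t+1})$ as required. There is no real obstacle here; the only subtlety worth spelling out is the independence of $r_i$ from $a_i$, which is what makes the best response $b_i(S_t)$ (computed at state $S_t$) exactly the right quantity to plug into the infection criterion evaluated at $\hat{S}_t$.
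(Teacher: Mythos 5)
Your proof is correct and matches the paper's (implicit) justification exactly: the same computation $b_i(S_t)\,r_i(S_t)=\min\{r_i(S_t),\tau(i)\}\leqslant\tau(i)$, together with $r_i(\hat{S}_t)=r_i(S_t)$, is what the paper spells out in the proofs of Lemmas~\ref{lem_2} and \ref{lem_3}. No issues.
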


\noindent Recall that our main goal in this paper is to explore the limiting behaviours of both the cardinality of the infected set of agents and the action profile of all the agents in our population. We now show that such limits are well-defined, at the very least, for a deterministic sequence of agents: 
\begin{lemma}\label{lem_1}
The DVSP $S(\uti)$ converges for each agent sequence $\uti \in N^{\mathbb{N}_0}$. In other words, both $\lim_{t \to \infty} I(S_t(\uti))$ and $\lim_{t \to \infty} a_{N}(S_t(\uti))$ exist.
\end{lemma}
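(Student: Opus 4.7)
My plan treats the two pieces of the claim separately.

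First, $\lim_t I(S_t(\uti))$ exists because the update rule gives $I(S_{t+1}) \supseteq I(S_t)$ for every $t$: agents can only be added to the infected set, never removed. Thus $|I(S_t)|$ is a non-decreasing integer sequence bounded by $|N|=n$, hence eventually constant at some $I_\infty \subseteq N$, attained by a finite time $T_0$.

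Next, for the action profile, I would split $N = N_\infty \sqcup N_{\text{fin}}$ according to whether each agent appears infinitely often in $\uti$. For $i \in N_{\text{fin}}$, Observation \ref{rem_0.5} makes $a_i(S_t)$ constant after $i$'s last selection. For $i \in I_\infty \cap N_\infty$, Observation \ref{obs_3} forces $a_i(S_t) = 1$ after $i$ is first chosen at a time $\geq T_0$. Pick $T_2 \geq T_0$ past which every $i \in N_{\text{fin}}$ has had its final update and every $i \in I_\infty \cap N_\infty$ has been chosen at least once. Then for $t \geq T_2$ only the actions of $M := (N\setminus I_\infty) \cap N_\infty$ can still change, and for each $i \in M$ the quantity $A_i := \sum_{j \in I_\infty \setminus \{i\}} g_{ij}\, a_j(S_t)$ is constant in $t$.

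If $A_i = 0$, Lemma \ref{rem_0} gives $b_i(S_t)=1$, so $a_i=1$ after the next selection of $i$. For $i$ in $M' := \{i \in M : A_i>0\}$, the best response simplifies to $b_i(S_t)=\min\{1,\, \tau(i) + \tau(i) B_i(t)/A_i\}$ with $B_i(t) = \sum_{j \in (N\setminus I_\infty)\setminus\{i\}} g_{ij}\, a_j(S_t)$, which is continuous and non-decreasing in each $a_j(S_t)$. So on $M'$ the updates form an asynchronous best-response dynamics in a supermodular game. Setting $\overline{a}_i := \limsup_t a_i(S_t)$ and $\underline{a}_i := \liminf_t a_i(S_t)$, continuity and monotonicity of $b_i$ together with the fact that each $i \in M'$ is chosen infinitely often yield the sub- and super-fixed-point inequalities $\overline{a}_i \leq b_i(\overline{a}_{-i})$ and $\underline{a}_i \geq b_i(\underline{a}_{-i})$.

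The hard part is to upgrade these inequalities to the equality $\overline{a}_i = \underline{a}_i$, which is precisely convergence. I expect this to be the technical heart of the proof: one must exploit the specific affine-plus-clipping form of $b_i$ to rule out a persistent gap between liminf and limsup. One natural attack is to sandwich the orbit between the monotone best-response trajectories started from $\overline{a}$ and $\underline{a}$; a cleaner alternative is to show the restricted game on $M'$ admits a unique Nash equilibrium via a contraction/invertibility argument on the affine part in the unclipped regime, combined with a separate treatment of coordinates that eventually saturate at $1$. Either way, this last step is where the bulk of the technical work lies.
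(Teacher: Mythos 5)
Your treatment of the infected set matches the paper's and is fine. For the action profile, however, you stop exactly where the proof actually has to happen: after deriving the inequalities $\overline{a}_i \leq b_i(\overline{a}_{-i})$ and $\underline{a}_i \geq b_i(\underline{a}_{-i})$ you concede that upgrading them to $\overline{a}_i=\underline{a}_i$ is ``the technical heart'' and offer two possible attacks without carrying out either. That step cannot be left open: asynchronous best-response dynamics in a supermodular game started from an arbitrary point of the lattice need not converge (it can oscillate), so the two one-sided fixed-point inequalities by themselves do not force the liminf and limsup to coincide, and neither the sandwiching argument nor the claimed uniqueness of the equilibrium on $M'$ is actually established in your write-up. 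As it stands, the second half of the lemma is not proved.

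The idea you are missing is that the orbit itself is eventually coordinatewise non-decreasing; this is precisely Claim~\ref{cl_3} in the paper's proof. Let $t_0$ be such that $I(S_t)=I(S_{t_0})$ for all $t\geq t_0$, take $\bar{t}\geq t_0+1$, and let $j=\uti_{\bar{t}}$. If $j\in I(S_{\bar{t}})$ its new action is $1\geq a_j(S_{\bar{t}})$. If $j\notin I(S_{\bar{t}})$, then the fact that $j$ did \emph{not} get infected at step $\bar{t}$ gives $a_j(\hat{S}_{\bar{t}-1})\,r_j(\hat{S}_{\bar{t}-1})\leq \tau(j)$; since the infected set did not change between $\bar{t}-1$ and $\bar{t}$, Observation~\ref{rem_0.25}(iii) gives $\hat{S}_{\bar{t}-1}=S_{\bar{t}}$, hence $a_j(S_{\bar{t}})\leq \tau(j)/r_j(S_{\bar{t}})$. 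In other words, an uninfected agent's current action already sits weakly below its own best response $\min\{1,\tau(j)/r_j(S_{\bar{t}})\}$, so each update can only move it up. Every coordinate of $a_N(S_t)$ is therefore non-decreasing and bounded by $1$ for $t\geq t_0+1$, and convergence is immediate --- with no decomposition into finitely and infinitely updating agents, no limsup/liminf bookkeeping, and no equilibrium-uniqueness or contraction argument. This survival-implies-sub-best-response observation is exactly the structural fact your supermodularity framing fails to capture.
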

\noindent The proof of this lemma can be found in Appendix \ref{sec:appendix_A}. In view of Lemma \ref{lem_1}, we set $S_\infty(\uti) = \lim_{t \rightarrow \infty}S_{t}(\uti)$. 

The set ${N}^{\mathbb{N}_0}$ is the set  of all agent-sequences indexed by $\mathbb{N}_0$. We consider the probability space $(N^{\mathbb{N}_0}, \mathcal{F}, \mathbb{P})$ where $\mathcal{F}$ is the sigma-field generated by the cylindrical sets of ${N}^{\mathbb{N}_0}$ and $\mathbb{P}$ is the uniform probability distribution.

\begin{remark}\label{rem_3}
  \sloppy Let $N_{\infty}$ be the subset of ${N}^{\mathbb{N}_0}$ consisting of the agent-sequences where each agent moves an infinite number of times. In other words, ${N}_{\infty}= \{\uti \in {N}^{\mathbb{N}_0}: \uti_t=i \text{ for infinitely many } t, \text{ for all } i \in N\}$. It is straightforward to see that the set $N_\infty$ has probability 1 under $\mathbb{P}$, since the probability of the set ${N}^{\mathbb{N}_0} \setminus N_\infty$ is $0$.
\end{remark}

In view of Remark~\ref{rem_3}, for the rest of the paper, we shall work with the probability space $(N_\infty,\mathcal{F},\mathbb{P})$. Recall that in the stochastic virus spread  process (SVSP), before each epoch commences, an agent is chosen randomly, following the discrete uniform distribution on the set $N$, and they are allowed to update their action by playing their best response (see \eqref{best_response}) to the current state. Consequently, the SVSP is a random variable $S$ supported on the probability space $(N_\infty, \mathcal{F}, \mathbb{P})$.

For an agent $i \in N$, the random variable $t_i$ is defined as follows with respect to $(N_\infty, \mathcal{F}, \mathbb{P})$: for $\undertilde{v}\in N_\infty$, we set $t_i(\uti)= l$ if $l \in \mathbb{N}_0$ is such that $\uti_l = i$ and $\uti_k \neq i$ for all $k<l$. Note that for any $\uti\in N_\infty$, $i\in N$, and $t\in \mathbb{N}_0$ with $t\leqslant t_i(\uti)$, we have $a_i(S_0)=a_i(S_{t}(\uti))$. Let $N_1$ be the measurable function on $(N_\infty, \mathcal{F}, \mathbb{P})$ that describes the (random) set of agents who had been chosen before agent 1 was chosen for the first time, that is, $N_1(\uti)= \{i\in N \mid  t_i(\uti)<t_1(\uti)\}$. 


We now establish that $|N_1|$ follows the uniform distribution on $\{0,1,\ldots,n-1\}$. Lemma~\ref{lem_5} will be used in the proofs of the main results of this paper.
\begin{lemma}\label{lem_5}
$\mathbb{P}(|N_1|=l) = \frac{1}{n}$ for all $l \in \{0,1,\ldots,n-1\}$.
\end{lemma}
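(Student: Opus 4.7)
The plan is to exploit the symmetry of the uniform distribution $\mathbb{P}$ on $N^{\mathbb{N}_0}$ in order to show that the random variables $t_1, t_2, \ldots, t_n$ are exchangeable, and to then conclude the uniform distribution of $|N_1|$ via a standard rank argument. First I would recall that, for each $i \in N$, the first-appearance time $t_i(\uti)$ was defined in the paragraph preceding the statement of Lemma~\ref{lem_5}, and observe that since each epoch selects exactly one agent $\uti_t$, the values $t_1(\uti), \ldots, t_n(\uti)$ are pairwise distinct for every $\uti \in N_\infty$.

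Next I would establish the exchangeability of $(t_1, \ldots, t_n)$ under $\mathbb{P}$. Given any permutation $\sigma$ of $N$, consider the map $\Phi_\sigma : N^{\mathbb{N}_0} \to N^{\mathbb{N}_0}$ defined by $(\Phi_\sigma \uti)_t = \sigma(\uti_t)$ for each $t \in \mathbb{N}_0$. Since each coordinate of $\uti$ is an independent uniform draw from $N$ and $\sigma$ is a bijection on $N$, $\Phi_\sigma$ is a measure-preserving bijection of $(N^{\mathbb{N}_0}, \mathcal{F}, \mathbb{P})$ that preserves the full-measure subset $N_\infty$. A direct verification from the definition gives $t_i(\Phi_\sigma \uti) = t_{\sigma^{-1}(i)}(\uti)$, and hence $(t_{\sigma^{-1}(1)}, \ldots, t_{\sigma^{-1}(n)})$ has the same joint distribution as $(t_1, \ldots, t_n)$ for every such $\sigma$, which is precisely exchangeability.

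Finally, combining exchangeability with the almost-sure pairwise distinctness of the $t_i$'s, the rank $R := |\{i \in N : t_i \leqslant t_1\}|$ of $t_1$ among $t_1, \ldots, t_n$ is uniformly distributed on $\{1, 2, \ldots, n\}$. Since $|N_1(\uti)| = |\{i \in N : t_i(\uti) < t_1(\uti)\}| = R(\uti) - 1$, this forces $|N_1|$ to be uniform on $\{0, 1, \ldots, n-1\}$, and so $\mathbb{P}(|N_1| = l) = 1/n$ for each such $l$. No substantive obstacle is anticipated; the only slightly delicate point is the measure-preservation claim for $\Phi_\sigma$, which is immediate from the product structure of $\mathbb{P}$ together with the coordinatewise uniform distribution on $N$.
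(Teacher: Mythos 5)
Your proof is correct and follows essentially the same route as the paper's: both rest on the observation that, by the symmetry of $\mathbb{P}$ under relabelling of agents, all $n!$ orderings of the distinct first-appearance times $t_1,\ldots,t_n$ are equally likely, so the rank of $t_1$ is uniform. The only difference is cosmetic: the paper asserts this symmetry directly and finishes with an explicit count of orderings, whereas you justify the exchangeability via the measure-preserving map $\Phi_\sigma$ and then invoke the standard rank argument, which is a slightly more rigorous treatment of the step the paper takes for granted.
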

\begin{proof}
	 Since $\mathbb{P}$ is uniform and there are $n!$ possible orderings of the random times $t_1,\ldots, t_n$, each ordering of $t_1,\ldots,t_n$ has an equal probability of $\frac{1}{n!}$ to occur. We can choose $m-1$ random variables from the set $\{t_2,\ldots,t_{n}\}$ of $n-1$ random times in $\Mycomb[n-1]{m-1}$ ways. Therefore, the number of orderings that correspond to the event $| \{i \in N \mid t_i < t_1\}| = m-1$  is $\Mycomb[n-1]{m-1} \times (m-1)! \times (n-m)! $, and hence, the probability of the said event  is $\frac{\Mycomb[n-1]{m-1} \times (m-1)! \times (n-m)! }{n!}$, which is $\frac{1}{n}$. This completes the proof of the lemma.
 \end{proof}

\section{Main Results }\label{sec:results} Throughout this section, we adhere to the following assumptions, and all results stated henceforth are true when these assumptions are imposed:
\begin{enumerate}
  		\item We assume that the process starts from the initial state $S_0$ in which $a_i(S_0)=a$ for all $i\in N$, and that the initial infected set is a singleton. In particular, we set $I(S_0)=\{1\}$,
		\item We assume that each agent in $N$ is endowed with the same immunity power, i.e.\ $\tau(i)=\tau$ for all $i \in N$.
	    \item Finally, we assume that the interaction between every pair of agents is the same. In particular, we set $g_{ij}=1$ for all $(i,j)\in N^2$ with $i \neq j$.
\end{enumerate}
\begin{remark}
We emphasize here that all results in \S\ref{sec:results} hold if we assume $g_{ij}=c$ for all $(i,j) \in N^2$ with $i\neq j$, for some $c \in \mathbb{R}$.
\end{remark}

We adopt the following notations to state our results. For $a\in \mathbb{R}$, we let $\lceil a\rceil=\min\{k \in \mathbb{Z}: a\leqslant k\}$ and $\lfloor a\rfloor=\max\{k \in \mathbb{Z}: a\geqslant k\}$. For $a,b\in \mathbb{N}_0$, $[a,b]$ denotes the set $\{a,a+1,\ldots,b\}$ if $a\leqslant b$, and denotes the null set if $a>b$. For $m\geqslant \lfloor \tau(n-1)\rfloor +1$, we define the set $A_{m}$ to be the set of all ordered tuples $\vec{x}=(x_1,\ldots,x_n)$ that satisfy the following properties:
\begin{enumerate}
    \item $x_1=1$,
    \item there are precisely $m-1$ coordinates $i \in \{2, \ldots, n\}$ such that $x_{i} = 1$,
    \item each of the remaining coordinates equals $\frac{\tau m}{(1+\tau)m-\tau(n-1)}$.\footnote{As $m>(n-1)\tau$, $\frac{\tau m}{(1+\tau)m-\tau(n-1)}$ is strictly less than 1.}
\end{enumerate}
 A couple of facts follow immediately from the above definition. The first is that $A_{n}$ is the singleton set $\{\vec{1}\}$, where $\vec{1}$ is the $n$-dimensional tuple in which each coordinate equals $1$. The second is that $|A_{m}| = \Mycomb[n-1]{m-1}$ for each $m$ for which $A_{m}$ is well-defined, since we need only choose the $m-1$ coordinates out of $2, \ldots, n$ that equal $1$.


\subsection{Results when $a=0$}\label{subsec:a=0}
Here, we consider the situation where the (common) initial action $a$ equals $0$. Theorem \ref{theo_1} provides the limiting distribution of the infected set for arbitrary values of $\tau$. Let  
\begin{equation}\label{alpha}
\alpha=\min\left\{\left \lceil\frac{1}{\tau}\right \rceil,n\right\}.
\end{equation}

\begin{theorem}\label{theo_1} Suppose $a=0$. Then the limiting distribution of the infected set is as follows, with $\alpha$ as defined in \eqref{alpha}:

	\[
	\mathbb{P}(I(S_\infty)=J)= 
	\begin{cases} 
		1-\frac{\alpha-1}{n} & \text{if } J=\{1\} , \\\\
		\frac{1}{n \times \Mycomb[n-1]{m-1}} & \text{if } 1\in J \text{ and } |J|=m \text{ where } m\in[2,\alpha], 
		\\\\ 
		0 & \text{ otherwise }.
	\end{cases}
	\]





	
\end{theorem}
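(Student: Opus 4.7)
The plan is to use Lemma~\ref{lem_1} to define $I(S_\infty)$ pathwise on $N_\infty$ and then compute its law by analyzing the DVSP dynamics given the agent sequence $\uti$. The key observation I want to exploit is that every infection occurs at a single \emph{critical} epoch $t_1(\uti)$, namely the first epoch at which agent $1$ is selected. Writing $N_1 = N_1(\uti)$ and $U = N \setminus (N_1 \cup \{1\})$, I will show that before the critical epoch no infection occurs, at the critical epoch all of $N_1$ becomes infected (and nobody else) precisely when $|N_1| < 1/\tau$, and no further infection occurs afterwards. Once this is in hand, the distribution of $I(S_\infty)$ will drop out of Lemma~\ref{lem_5} and the permutation symmetry across $\{2, \ldots, n\}$.

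For $t < t_1(\uti)$, agent $1$ still has $a_1 = 0$; hence for any uninfected $j$ chosen in this phase, the numerator $\sum_{i \in I(S_t) \setminus \{j\}} g_{ji}\, a_i(S_t)$ of $r_j(S_t)$ vanishes, so $r_j(S_t) = 0$ and $b_j(S_t) = 1$ by Lemma~\ref{rem_0}. Thus no infection happens in this pre-critical phase, and by the time the critical epoch starts each agent in $N_1$ has action $1$ while each agent in $U$ has action $0$. At the critical epoch agent $1$ (being infected) updates to $a_1 = 1$; for $j \in N_1$ we then have $a_j(\hat{S}_{t_1}) = 1$ and $r_j(\hat{S}_{t_1}) = 1/|N_1|$, so $j$ becomes infected iff $1/|N_1| > \tau$, which by the definition of $\alpha$ is equivalent to $|N_1| \leq \alpha - 1$; for $j \in U$, $a_j = 0$ gives $a_j r_j = 0$, so no infection. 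Therefore, at the end of the critical epoch, $I = \{1\} \cup N_1$ if $|N_1| \leq \alpha - 1$, and $I = \{1\}$ otherwise.

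The main obstacle is to verify that no further infection arises at any epoch $t > t_1(\uti)$. I plan to prove by induction the invariant that $a_j(S_t) r_j(S_t) \leq \tau$ for every uninfected $j$. This rests on two monotonicity facts: (i) actions are weakly non-decreasing in $t$ past the critical epoch (infected agents have action $1$ throughout, and for an uninfected agent at its own pick epoch the best response $\min(1, \tau/r_j)$ is weakly larger than its previous action, because $r_j$ has only decreased in the meantime); and (ii) for uninfected $j$, the numerator of $r_j$ equals $|I|$ (constant under the induction hypothesis, since all infected agents have action $1$) while the denominator is weakly increasing, so $r_j$ weakly decreases. At a pick epoch of $j$ itself, $a_j(\hat{S}_t)\, r_j(\hat{S}_t) = \min(r_j(\hat{S}_t), \tau) \leq \tau$, and between picks $a_j$ is constant while $r_j$ only decreases; both preserve the invariant and forbid any new infection.

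Given the above, $I(S_\infty)(\uti) = \{1\} \cup N_1(\uti)$ when $|N_1(\uti)| \leq \alpha - 1$, and $I(S_\infty)(\uti) = \{1\}$ otherwise. By Lemma~\ref{lem_5}, $|N_1|$ is uniform on $\{0, 1, \ldots, n-1\}$; by the symmetry among $\{2, \ldots, n\}$, $N_1$ conditioned on $|N_1| = m-1$ is uniform over the $(m-1)$-subsets of $\{2, \ldots, n\}$. Hence, for any $J$ with $1 \in J$ and $|J| = m \in [2, \alpha]$, $\mathbb{P}(I(S_\infty) = J) = (1/n) \cdot 1/\binom{n-1}{m-1}$, while $\mathbb{P}(I(S_\infty) = \{1\}) = \mathbb{P}(|N_1| = 0) + \mathbb{P}(|N_1| \geq \alpha) = 1/n + (n - \alpha)/n = 1 - (\alpha - 1)/n$; all other $J$ have probability zero since $I(S_\infty) \supseteq \{1\}$ and $|I(S_\infty)| \leq \alpha$.
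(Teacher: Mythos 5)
Your proposal is correct and follows essentially the same route as the paper: condition on the agent sequence, show that all infection is decided at the critical epoch $t_1$ by comparing $|N_1|$ with $\alpha$, and then apply Lemma~\ref{lem_5} together with the symmetry over $\{2,\dots,n\}$. The only cosmetic difference is that you stabilize the process after the critical epoch via a direct monotonicity invariant where the paper invokes its Lemmas~\ref{lem_2} and~\ref{lem_4}; just note that for an uninfected $j\in N_1$ in the case $|N_1|\geqslant\alpha$, the exposure $r_j$ actually jumps up (from $0$ to $1/|N_1|$) at the critical epoch, so your ``$r_j$ has only decreased in the meantime'' justification needs the extra observation that $1/|N_1|\leqslant\tau$ keeps the best response equal to $1$ there.
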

The proof of the theorem can be found in Appendix \ref{sec:appendix_C}. Next, we proceed to explore the limiting distribution of the action profile, and this is found to be dependent on the value of $\tau$. Accordingly, the statement of Theorem~\ref{thm_ac_1} is split into two parts on the basis of whether $\tau$ exceeds $(n-1)^{-1}$ or not. We introduce the quantity
\begin{equation}\label{beta}
\beta=\min \{\lfloor(n-1)\tau \rfloor+1,\alpha+1\}.
\end{equation}

\begin{theorem}\label{thm_ac_1}
 Suppose $a=0$. For $\tau\geqslant \frac{1}{n-1}$, the limiting distribution of the action profile may be described as follows: 
\[
 \mathbb{P}(a_{N}(S_\infty)=\vec{x})= 
  \begin{cases} 
   1-\frac{\alpha-\beta+1}{n} & \text{if } \vec{x}=\vec{1}  , \\ \\
   \frac{1}{n \times \Mycomb[n-1]{m-1}} & \text{if } 
   \vec{x}\in A_m \text{ for some } m\in [\beta,\alpha],\\ \\
   0 & \text{ otherwise, } 
  \end{cases}
\]





	whereas for $\tau< \frac{1}{n-1}$, we have 
\[
 \mathbb{P}(a_{N}(S_\infty)=\vec{x})= 
  \begin{cases} 
   \frac{1}{n \times \Mycomb[n-1]{m-1}} & \text{if } 
   \vec{x}\in A_m \text{ for some } m\in [1,n],\\ \\
   0 & \text{ otherwise, } 
  \end{cases}
\]
with $\alpha$ and $\beta$ as defined in \eqref{alpha} and \eqref{beta} respectively.
\end{theorem}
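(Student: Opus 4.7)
My plan is to derive Theorem~\ref{thm_ac_1} from Theorem~\ref{theo_1} by showing that the limiting action profile $a_{N}(S_\infty)$ is a \emph{deterministic} function of the limiting infected set $I(S_\infty)$, and then pushing forward the distribution from Theorem~\ref{theo_1} along this map. The first step is to establish that $S_\infty$ is a fixed point of the best-response dynamics, in the sense that $a_i(S_\infty) = b_i(S_\infty)$ for every $i \in N$: this follows from Lemma~\ref{lem_1} (convergence of both $I(S_t)$ and $a_{N}(S_t)$), Remark~\ref{rem_3} (every agent is chosen infinitely often, almost surely), and the continuity of $b_i(\cdot)$ at $S_\infty$. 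By Lemma~\ref{rem_0} this immediately yields $a_i(S_\infty) = 1$ for every $i \in I(S_\infty)$.

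The crux of the argument is the analysis of the uninfected agents. For any uninfected $i$, since $1 \in I(S_\infty)$ and $g_{ij} = 1$, the numerator of $r_i(S_\infty)$ is at least $a_1(S_\infty) = 1$, so $r_i(S_\infty) > 0$; hence Lemma~\ref{rem_0} gives $a_i(S_\infty) = \min\{1, \tau/r_i(S_\infty)\}$. Writing $m = |I(S_\infty)|$ and $s = \sum_{k \notin I(S_\infty)} a_k(S_\infty)$, the uniformity $g_{ij}=1$ simplifies $r_i(S_\infty)$ to $m/(m+s-a_i(S_\infty))$. I would then rule out the possibility that two uninfected agents sit in different regimes: if $a_i = 1$ (so $r_i \leqslant \tau$) and $a_j = \tau/r_j < 1$ (so $r_j > \tau$), then $m+s-1 \geqslant m/\tau > m+s-a_j$ would force $a_j > 1$, a contradiction. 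So either every uninfected agent plays $x=1$ (consistent iff $m/(n-1) \leqslant \tau$, equivalently $m \leqslant \lfloor(n-1)\tau\rfloor$), or every uninfected agent plays the common value $x$ solving $x = \tau(m + (n-m-1)x)/m$, yielding $x = \tau m/((1+\tau)m - \tau(n-1))$ and requiring $m > (n-1)\tau$. In either case $a_{N}(S_\infty)$ is determined by $I(S_\infty)$: it equals $\vec{1}$ if $m < \beta$, and it is the unique element of $A_m$ whose unit coordinates are indexed by $I(S_\infty)$ if $m \geqslant \beta$.

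Finally I would transfer the distribution of $I(S_\infty)$ supplied by Theorem~\ref{theo_1}. When $\tau \geqslant 1/(n-1)$, $\beta \geqslant 2$, so the outcome $\vec{x} = \vec{1}$ collects the mass from $\{|I(S_\infty)| = 1\}$, namely $1 - (\alpha-1)/n$, together with the mass $1/n$ from each $m \in [2, \beta-1]$ (when this interval is non-empty), giving $1 - (\alpha - \beta + 1)/n$; and for each $\vec{x} \in A_m$ with $m \in [\beta, \alpha]$ the single preimage is the set $J$ indexing the unit coordinates of $\vec{x}$, of probability $1/(n\Mycomb[n-1]{m-1})$. When $\tau < 1/(n-1)$ one has $\lfloor(n-1)\tau\rfloor = 0$ and $\alpha = n$, so $\beta = 1$: the $\vec{1}$ regime collapses and all mass passes into the $A_m$'s for $m \in [1,n]$, with the same per-profile probability $1/(n\Mycomb[n-1]{m-1})$. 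The chief obstacle is the regime-separation step establishing the common value for all uninfected agents; the remaining edge cases (in particular $\alpha < \beta$, so $[\beta,\alpha]$ is empty and only $\vec{1}$ survives, as happens for $\tau = 1/2$, $n = 10$) follow by unwinding the definitions of $\alpha$ and $\beta$.
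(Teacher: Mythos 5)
Your proposal is correct, and it reaches the key structural fact --- that $a_N(S_\infty)$ is a deterministic function of $I(S_\infty)$, equal to $\vec{1}$ when $|I(S_\infty)|\leqslant (n-1)\tau$ and to the element of $A_{|I(S_\infty)|}$ with unit coordinates on $I(S_\infty)$ otherwise --- by a genuinely different route from the paper. The paper gets there in three stages: Lemma~\ref{ac_1} (an induction showing that among uninfected agents the most recently updated one has the weakly largest action), Lemma~\ref{ac_2} (an $\epsilon$-argument deducing that all uninfected agents share a common action limit $\gamma$), and Lemma~\ref{2} (passing to the limit in the best-response identity to compute $\gamma$, split on whether $(n-1)\tau<|I(S_\infty)|$). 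You instead characterize $S_\infty$ directly as a fixed point of the best-response map, using that each agent updates infinitely often and that $b_i$ is continuous at the limit (which needs $r_i(S_\infty)>0$; your observation that agent $1$ is infected and eventually plays $1$ supplies this, though a full writeup should also note that the denominator of $r_i(S_{t})$ stays bounded away from $0$ along the tail of the update times), and then you solve the resulting system of equations, with the regime-separation inequality $m+s-1\geqslant m/\tau> m+s-a_j$ replacing the paper's monotonicity machinery; one should also record the one-line subtraction showing two interior-regime agents must coincide, which your phrasing elides. The final push-forward from Theorem~\ref{theo_1} is identical to the paper's Step 2, including the bookkeeping for $[\beta,\alpha]$ empty and for $\beta=1$, $\alpha=n$ when $\tau<1/(n-1)$. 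What each approach buys: yours is shorter and self-contained for this theorem and makes the equilibrium structure transparent; the paper's Lemmas~\ref{ac_1}, \ref{ac_2} and \ref{2} are reused verbatim in the proofs of Theorems~\ref{thm_ac_1.5}, \ref{thm_ac_3}, \ref{thm_ac_4} and \ref{thm_ac_5}, so its heavier investment amortizes across the remaining action-profile results.
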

\noindent The proof of this theorem can be found in Appendix \ref{sec:appendix_E}.


A brief discussion is in order regarding some of the startling findings that may be deduced from the two theorems of \S\ref{subsec:a=0}. Theorem~\ref{theo_1} reveals that, if we consider \emph{any} two \emph{different} subsets $J_{1}$ and $J_{2}$ of $N$, the probabilities $\mathbb{P}(I(S_{\infty}) = J_{1})$ and $\mathbb{P}(I(S_{\infty}) = J_{2})$ are the same as long as $J_{1}$ and $J_{2}$ have the same cardinality and either both of them contain agent $1$ or neither contains agent $1$. We note that the number of subsets $J$ of $N$ with $1 \in J$ and $|J| = m$ is given by $\Mycomb[n-1]{m-1}$, so that summing $\mathbb{P}(I(S_{\infty}) = J)$ over all such $J$ yields $\mathbb{P}\left(\left|I(S_{\infty})\right| = m\right) = n^{-1}$ for \emph{each} $m \in [2,\alpha]$. These observations suggest a rather close resemblance that the limiting distribution of the infected set, as well as the limiting distribution of its cardinality, bears with suitably defined discrete uniform distributions. In fact, for $\tau \leqslant n^{-1}$, we have $\alpha = n$, reducing the distribution of $|I(S_{\infty})|$ to precisely the discrete uniform distribution on $\{ 1,2,\ldots, n \}$. This uniform structure is somewhat marred when $\tau > n^{-1}$. For example, when $n=5$, $a=0$ and $\tau= 0.25$, we have
\begin{align}
 &\mathbb{P}(|I(S_{\infty})|=1)=\frac{2}{5},\ \mathbb{P}(|I(S_{\infty})|=2)=\frac{1}{5},\ \mathbb{P}(|I(S_{\infty})|=3)=\frac{1}{5},\nonumber\\& \mathbb{P}(|I(S_{\infty})|=4)=\frac{1}{5},\ \mathbb{P}(|I(S_{\infty})|=5)=0,   
\end{align}
whereas if $\tau=0.4$, the probability distribution changes to 
\begin{align}
&\mathbb{P}(|I(S_{\infty})|=1)=\frac{3}{5},\ \mathbb{P}(|I(S_{\infty})|=2)=\frac{1}{5},\ \mathbb{P}(|I(S_{\infty})|=3)=\frac{1}{5},\nonumber\\& \mathbb{P}(|I(S_{\infty})|=4)=0,\ \mathbb{P}(|I(S_{\infty})|=5)=0.    
\end{align}
An intuitive explanation for this phenomenon is that with higher immunity, i.e.\ higher value of $\tau$, the disease is less likely to spread to the entire community, instead having a higher probability of remaining confined to the initial infected set.

Conclusions of a similar flavour can be drawn as a consequence of Theorem~\ref{thm_ac_1}. For \emph{any} two \emph{different} ordered tuples $\vec{x}$ and $\vec{y}$ that belong to the same $A_{m}$, the probabilities $\mathbb{P}(a_{N}(S_{\infty}) = \vec{x})$ and $\mathbb{P}(a_{N}(S_{\infty}) = \vec{y})$ are equal, for both the cases $\tau \geqslant (n-1)^{-1}$ and $\tau < (n-1)^{-1}$. Moreover, since $|A_{m}| = \Mycomb[n-1]{m-1}$, we obtain $\mathbb{P}(a_{N}(S_{\infty}) \in A_{m}) = n^{-1}$ for \emph{every} $m \in [\beta,\alpha]$ when $\tau \geqslant (n-1)^{-1}$ and for every $m \in [1,n]$ when $\tau < (n-1)^{-1}$. These are, once again, reminiscent of suitably defined discrete uniform distributions. 

A connection may be established between Theorem~\ref{theo_1} and Theorem~\ref{thm_ac_1}, for the case where $\tau \geqslant (n-1)^{-1}$, via the following fact whose justification has been included in the proof of Theorem~\ref{thm_ac_1} in \S\ref{subsec:proof_theo_1} of the Appendix (\S\ref{sec:appendix_C}): for any DVSP $S(\uti)$, if the limiting infected set has cardinality $m\in [\beta,\alpha]$ (note that $\beta\geqslant 2$), the limiting action profile will be a tuple in $A_m$, with all infected agents choosing action $1$ and all uninfected agents choosing action $\tau m[(1+\tau)m-\tau(n-1)]^{-1}$. On the other hand, if the limiting infected set for the DVSP $S(\uti)$ has cardinality strictly less than $\beta$, the final action profile becomes $\vec{1}$, signifying that \emph{all} agents choose action $1$ in the long run.

	

\subsection{Results when $a=1$}\label{subsec:a=1}
Here, we consider the situation where the (common) initial action $a$ equals $1$. The following theorem provides the limiting distribution of the set of infected agents: 

\begin{theorem}\label{theo_1.5} Suppose $a=1$. If $\tau\geqslant \frac{1}{n-1}$, the limiting distribution of the infected set is given by
\begin{center}
    $\mathbb{P}(I(S_\infty)=\{1\})= 
   1$,
 \end{center} 
whereas if $\tau < \frac{1}{n-1}$, the limiting distribution is given by
\[
 \mathbb{P}(I(S_\infty)=J)= 
  \begin{cases} 
   \frac{1}{n^2} & \text{if } 1\in J \text{ and } |J|=n-1,
   \\ \\
    1-\frac{n-1}{n^2} & \text{if } |J|=n, \text{ i.e., } J=N, \\ \\
   0 & \text{ otherwise. } 
  \end{cases}
\]    

\end{theorem}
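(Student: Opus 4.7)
The plan is to split into the cases $\tau\geqslant (n-1)^{-1}$ and $\tau<(n-1)^{-1}$, and in each branch show that the infected set stabilises after at most two epochs; once this is done the probabilities for $I(S_\infty)$ drop out immediately from the independence (across $t$) of the uniform choices of $\uti_t$, exactly as in the kind of counting already used for Lemma~\ref{lem_5}.

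For the case $\tau\geqslant (n-1)^{-1}$, I would argue that $S_0$ is already a fixed point of the dynamics. A direct computation using $a_i(S_0)=1$ for all $i$ and $I(S_0)=\{1\}$ gives $r_i(S_0) = 1/(n-1) \leqslant \tau$ for every $i \in \{2,\ldots,n\}$, so by Lemma~\ref{rem_0} the best response of each agent (infected or not) at $S_0$ equals $1 = a_i(S_0)$. Hence the intermediate action profile at $\hat{S}_0$ coincides with $a_N(S_0)$, and the infection check $a_j(\hat{S}_0)r_j(\hat{S}_0) = 1/(n-1) \leqslant \tau$ is not strictly violated for any $j\neq 1$. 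By induction $S_t = S_0$ for every $t$, and so $\mathbb{P}(I(S_\infty)=\{1\}) = 1$.

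For the case $\tau<(n-1)^{-1}$ I would condition on $\uti_0$. If $\uti_0 = 1$, the action profile is unchanged at $\hat{S}_0$ but now $a_j(\hat{S}_0) r_j(\hat{S}_0) = 1/(n-1) > \tau$ for every $j\neq 1$, so $I(S_1)=N$. If instead $\uti_0 = i$ for some $i\in\{2,\ldots,n\}$, then Lemma~\ref{rem_0} yields $b_i(S_0) = \tau(n-1) < 1$, and two checks at $\hat{S}_0$ are needed: first, agent $i$ has $a_i r_i = \tau(n-1)\cdot (n-1)^{-1} = \tau$, so $i$ stays uninfected; second, every other $j\neq 1$ satisfies $r_j(\hat{S}_0) = [\tau(n-1) + (n-2)]^{-1}$, and one must verify the strict inequality $\tau[\tau(n-1) + (n-2)] < 1$, which follows from $\tau(n-1) < 1$ by bounding $\tau(n-1) + (n-2) < n-1$ and then multiplying by $\tau < (n-1)^{-1}$. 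Thus $I(S_1) = N\setminus\{i\}$, with $a_i(S_1) = \tau(n-1)$ and $a_k(S_1) = 1$ for $k\neq i$. This algebraic bound, together with the careful bookkeeping of the denominator of $r_j$ at $\hat{S}_0$, is the main obstacle in the proof.

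Finally I would analyse epoch $1$ conditional on $\uti_0 = i \neq 1$. One computes $r_i(S_1) = (n-1)/(n-1) = 1$, since every agent other than $i$ is in $I(S_1)$. If $\uti_1 = i$ (probability $1/n$), then $b_i(S_1) = \min\{1,\tau\} = \tau$, so $a_i$ updates from $\tau(n-1)$ to $\tau$, and the infection check gives $a_i r_i = \tau$, keeping $i$ safe. A short induction then shows that the resulting state is an absolute fixed point (any later infected agent has best response $1$ with no action change; any later choice $\uti_t = i$ gives $b_i=\tau$ with no action change), so $I(S_\infty) = N\setminus\{i\}$ on this event. If instead $\uti_1 \neq i$ (probability $(n-1)/n$), the chosen agent is infected and plays $1$, the profile is unchanged, but the check now reads $a_i r_i = \tau(n-1) > \tau$ and $i$ gets infected, giving $I(S_\infty) = N$. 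Combining, $\mathbb{P}(I(S_\infty) = N\setminus\{i\}) = \mathbb{P}(\uti_0 = i,\,\uti_1 = i) = n^{-2}$ for each $i \in \{2,\ldots,n\}$, and the residual probability $1 - (n-1)/n^2$ is concentrated on $I(S_\infty) = N$, matching the claim of the theorem.
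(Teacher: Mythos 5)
Your proposal is correct and follows essentially the same route as the paper: condition on $\uti_0$ (and, when $\uti_0=i\neq 1$, on whether $\uti_1=i$), track the dynamics explicitly through the first two epochs, show the state then stabilises, and read off the probabilities $1/n^2$ per set of size $n-1$. The only cosmetic difference is that in the case $\tau\geqslant(n-1)^{-1}$ you observe directly that $S_0$ is a fixed point for every choice of agent, whereas the paper still splits on $\uti_0$ and invokes its Lemma~\ref{lem_2} for the stabilisation step; the substance is identical.
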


\noindent The proof of this theorem can be found in Appendix \ref{sec:appendix_C}. Note that since there are $n-1$ many sets $J$ such that $1\in J \text{ and } |J|=n-1$, the above display exhibits a valid probability distribution.

\begin{theorem}\label{thm_ac_1.5}
Suppose $a=1$. If $\tau\geqslant \frac{1}{n-1}$, the limiting distribution of the action profile is given by
 \begin{center}
    $\mathbb{P}(a_{N}(S_\infty)=\vec{1})= 
   1$,
 \end{center} 
	whereas if $\tau< \frac{1}{n-1}$, the limiting distribution becomes
		\[ 
  		\mathbb{P}(a_{N}(S_\infty)=\vec{x})=
  	\begin{cases}
   1-\frac{n-1}{n^2} &\text{ if } \vec{x}=\vec{1}, \\ \\
  \frac{1}{n^2}& \text{ if }  \vec{x}\in A_{n-1}, \\ \\
  	0 & \text{ otherwise. }
  	
  	\end{cases}
\] 
\end{theorem}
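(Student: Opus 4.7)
The plan is to carry out a direct case analysis based on which agents $\uti_0, \uti_1$ are chosen in the first two epochs, repeatedly applying Lemma~\ref{rem_0} to compute best responses and tracking how the action profile and infected set evolve. At the initial state $S_0 = (\{1\}, \vec{1})$, every uninfected agent $i \neq 1$ has $r_i(S_0) = \frac{1}{n-1}$ (one infected contributor in the numerator, $n-1$ action-$1$ contributors in the denominator).

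For the regime $\tau \geq \frac{1}{n-1}$: Lemma~\ref{rem_0} gives $b_i(S_0) = \min\{1, \tau(n-1)\} = 1$ for every uninfected $i$, and the infection condition $a_j r_j = \frac{1}{n-1} \leq \tau$ is never strict. Hence $\hat{S}_0 = S_0 = S_1$, and an easy induction on $t$ yields $S_t = S_0$ for all $t$, so $a_{N}(S_\infty) = \vec{1}$ almost surely.

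For the regime $\tau < \frac{1}{n-1}$ (I focus on $n \geq 3$; the case $n=2$ is handled by a direct check), I condition on $\uti_0$. If $\uti_0 = 1$, agent $1$'s best response is $1$, so the profile is unchanged, but $\frac{1}{n-1} > \tau$ forces every $j \neq 1$ to be infected at epoch $0$; hence $I(S_1) = N$, no further action changes are possible, and $a_N(S_\infty) = \vec{1}$. If $\uti_0 = i \neq 1$, then $b_i(S_0) = \tau(n-1) < 1$; a short computation using $\tau(n-1) < 1$ shows $a_j r_j = \frac{1}{(n-2) + \tau(n-1)} > \tau$ for every $j \not\in \{1,i\}$ while $a_i r_i = \tau$, so $I(S_1) = N \setminus \{i\}$ with $a_i(S_1) = \tau(n-1)$ and all other coordinates equal to $1$.

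I then condition on $\uti_1$. If $\uti_1 = i$, the exposure $r_i(S_1) = 1$ (all other agents are infected with action $1$), so $b_i = \tau$; this produces the $A_{n-1}$-tuple with $a_i = \tau$, and since $a_i r_i = \tau$ no new infection occurs. The resulting state is a fixed point: infected agents always play $1$, agent $i$ keeps playing $\tau$, and the strict infection inequality is never met. If $\uti_1 = j \neq i$, then $j$ is infected so $b_j = 1$ leaves the profile unchanged; but $a_i r_i = \tau(n-1) > \tau$ (using $n \geq 3$) infects agent $i$. Thereafter $a_i$ stays at $\tau(n-1)$ until agent $i$ is next selected -- which, by Remark~\ref{rem_3}, happens almost surely -- and then Observation~\ref{obs_3} drives $a_i$ to $1$, giving $a_N(S_\infty) = \vec{1}$. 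Aggregating, $\mathbb{P}(\uti_0 = i, \uti_1 = i) = \frac{1}{n^2}$ for each $i \neq 1$ yields the claimed $A_{n-1}$ mass, and everything else contributes to $\vec{1}$ with probability $1 - (n-1)/n^2$. The main obstacle is the careful bookkeeping in the subcase $\uti_1 = j \neq i$: one must verify both that agent $i$ is necessarily infected at epoch $1$ (requiring $n \geq 3$ to ensure the strict inequality $\tau(n-1) > \tau$) and that her action eventually returns to $1$ once she is infected.
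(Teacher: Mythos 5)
Your argument is correct and yields the stated distribution, but it is organized differently from the paper's proof. The probabilistic decomposition is the same --- conditioning on $\uti_0$ and $\uti_1$, exactly as in the paper's proof of Theorem~\ref{theo_1.5} --- but where you re-derive the dynamics from scratch and identify the limiting action profiles by exhibiting explicit absorbing states (the $A_{n-1}$ fixed point when $\uti_0=\uti_1=i\neq 1$, and $\vec{1}$ in every other case), the paper's proof of Theorem~\ref{thm_ac_1.5} is modular: it quotes Theorem~\ref{theo_1.5} for the law of $I(S_\infty)$ and then reads off the actions from Lemma~\ref{ac_2} (infected agents have action limit $1$; all uninfected agents share a common limit $\gamma$) together with Lemma~\ref{2} (which gives $\gamma=\tau m/((1+\tau)m-\tau(n-1))=\tau$ when $|I(S_\infty)|=m=n-1>(n-1)\tau$). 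Your route is more elementary and self-contained; the paper's buys brevity and reuses the same two lemmas across Theorems~\ref{thm_ac_1}, \ref{thm_ac_3}, \ref{thm_ac_4} and \ref{thm_ac_5}. Your explicit observation that the infection step $a_ir_i=\tau(n-1)>\tau$ requires $n\geqslant 3$ is a point the paper leaves implicit (its proof of Theorem~\ref{theo_1.5}, Case~2.2, relies on the same strict inequality); note, however, that the ``direct check'' you defer for $n=2$ would not actually confirm the stated formula --- for $n=2$ the single event $\uti_0=2$ already traps the process at $(1,\tau)\in A_{1}=A_{n-1}$, giving that set mass $\tfrac{1}{2}$ rather than $\tfrac{1}{n^2}=\tfrac{1}{4}$ --- so both the theorem and the paper's own proof tacitly assume $n\geqslant 3$.
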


\noindent The proof of this theorem can be found in Appendix \ref{sec:appendix_E}.

We draw the reader's attention to the fact that the results of \S\ref{subsec:a=1} differ quite a bit in appearance from those in \S\ref{subsec:a=0}. While the limiting distribution of the infected set, for $a=0$, is supported on \emph{all} subsets of $N$ that contain $1$ and that have sizes bounded above by $\alpha$ (Theorem~\ref{theo_1}), the infected set, for $a = 1$, converges to the singleton $\{1\}$ when $\tau \geqslant (n-1)^{-1}$, and its limiting distribution is supported on \emph{only} those subsets of $N$ that contain $1$ and have cardinality at least $n-1$ when $\tau < (n-1)^{-1}$ (Theorem~\ref{theo_1.5}). In some sense, for $a=0$, the limiting distribution is ``spread out" over a wider support, while for $a=1$, it is more ``concentrated".

Likewise, for $a = 0$, the limiting distribution of the action profile is supported on \emph{all} $A_{m}$ with $m \in \{n\} \cup [\beta,\alpha]$ when $\tau \geqslant (n-1)^{-1}$, and it is supported on \emph{all} $A_{m}$ with $m \in \{1, \ldots, n\}$ when $\tau < (n-1)^{-1}$ (Theorem~\ref{thm_ac_1}). In contrast, for $a=1$, the action profile converges to $\vec{1}$ when $\tau \geqslant (n-1)^{-1}$, and the limiting distribution of the action profile is supported on \emph{just} $A_{n} \cup A_{n-1}$ when $\tau < (n-1)^{-1}$ (Theorem~\ref{thm_ac_1.5}).

\subsection{Results when $0<a\leqslant \tau$ and $a\neq 1$}\label{sec:a<t}

In this subsection, we consider the case where the (common) initial action $a$ lies strictly between $0$ and $1$, and is bounded above by $\tau$. Let 
\begin{equation}\label{halpha}
\halpha= \max\left\{1,\left\lceil \frac{\frac{1}{\tau}-(n-1)a}{1-a}\right\rceil\right\}.
\end{equation}

\begin{theorem}\label{theo_2}
Suppose  $0<a\leqslant \tau$ and $a\neq 1$. Further, suppose $\tau\geqslant \frac{1}{n-1}$. Then  the limiting distribution of the infected set is given by	 
 	\[
	\mathbb{P}(I(S_\infty)=J)= 
	\begin{cases} 
		1-\frac{\hat{\alpha}-1}{n} & \text{if } J=\{1\}  , \\ \\
		\frac{1}{n \times \Mycomb[n-1]{m-1}} & \text{if }  1\in J \text{ and } |J|=m \text{ where } m\in[2, \halpha],
		\\ \\
		0 & \text{ otherwise.}
	\end{cases}
	\]

 


	\end{theorem}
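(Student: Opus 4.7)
The plan is to analyse the DVSP $S(\uti)$ for a generic $\uti \in N_\infty$, identify $I(S_\infty(\uti))$ as a deterministic function of $N_1(\uti)$, and then invoke Lemma~\ref{lem_5} together with a short symmetry argument to read off the probabilities. Throughout, write $t_1 = t_1(\uti)$, $k = |N_1(\uti)|$, and $x = (1/\tau - (n-1)a)/(1-a)$, so that $\halpha = \max\{1, \lceil x \rceil\}$ and the strict inequality $k < x$ is equivalent to $k \in \{0, 1, \ldots, \halpha-1\}$.

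First I would prove by induction on epochs $t < t_1$ that $I(S_t) = \{1\}$, every agent in $N_1$ who has already moved holds action $1$, and every other agent still holds action $a$. When an uninfected agent is chosen for the first time at such an epoch, her viral exposure evaluates to $a/((l-1) + (n-l)a)$ for a suitable $l \in \{1,\ldots,k\}$; the hypotheses $\tau \geqslant 1/(n-1)$ and $a \leqslant \tau$ together give $\tau/r \geqslant 1$, and Lemma~\ref{rem_0} then yields that her best response is $1$. An analogous direct computation at the post-move state shows that every uninfected agent's infection product remains $\leqslant \tau$, so no infection occurs before epoch $t_1$.

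At epoch $t_1$, agent $1$ plays action $1$, producing $\hat{S}_{t_1}$ with $\{1\} \cup N_1$ at action $1$ and the remaining $n - k - 1$ agents at action $a$. For $j \in N_1$, a short calculation gives $a_j(\hat{S}_{t_1}) r_j(\hat{S}_{t_1}) = 1/(k + (n-1-k)a)$, which exceeds $\tau$ iff $k(1-a) + (n-1)a < 1/\tau$, that is, $k < x$. For $j \in N \setminus N_1 \setminus \{1\}$, the corresponding product is $a/(k+1 + (n-k-2)a)$, and since the denominator is at least $k+1 \geqslant 1$ while $a \leqslant \tau$, the product is at most $\tau$, so no such $j$ becomes infected. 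Consequently,
\[
I(\hat{S}_{t_1}) = \begin{cases} \{1\} \cup N_1, & k \in \{1,\ldots,\halpha-1\}, \\ \{1\}, & k = 0 \text{ or } k \geqslant \halpha. \end{cases}
\]

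The key remaining step, and the main obstacle, is to prove $I(S_t) = I(\hat{S}_{t_1})$ for every $t > t_1$. I would argue this by induction on $t$, maintaining the invariant that $a_j(S_t) r_j(S_t) \leqslant \tau$ for every $j \notin I(\hat{S}_{t_1})$ and that every infected agent continues to hold action $1$. The inductive step rests on the monotonicity observation that, for an uninfected chosen agent $\uti_t$, the invariant $a_{\uti_t}(S_t) r_{\uti_t}(S_t) \leqslant \tau$ combined with $a_{\uti_t}(S_t) \leqslant 1$ forces $a_{\uti_t}(S_t) \leqslant \min\{1, \tau/r_{\uti_t}(S_t)\} = b_{\uti_t}(S_t)$; hence $\uti_t$'s action does not decrease at her move, so for any other uninfected $l$ the denominator of $r_l$ does not decrease, $r_l$ does not increase, and $a_l r_l$ stays $\leqslant \tau$. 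Once this invariance is proved, $I(S_\infty(\uti)) = \{1\} \cup N_1(\uti)$ when $k \in \{1,\ldots,\halpha-1\}$ and $I(S_\infty(\uti)) = \{1\}$ otherwise. By Lemma~\ref{lem_5}, $|N_1|$ is uniform on $\{0, 1, \ldots, n-1\}$, yielding $\mathbb{P}(I(S_\infty) = \{1\}) = 1/n + (n-\halpha)/n = 1 - (\halpha-1)/n$; the same counting of the $n!$ equally likely orderings of $(t_1,\ldots,t_n)$ used in the proof of Lemma~\ref{lem_5} also gives $\mathbb{P}(N_1 = J \setminus \{1\}) = 1/[n \Mycomb[n-1]{m-1}]$ for every $J$ containing $1$ with $|J| = m \in [2, \halpha]$, which completes the distribution.
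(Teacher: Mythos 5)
Your proposal is correct and follows essentially the same route as the paper: a deterministic analysis of the DVSP keyed on $|N_1(\uti)|$ versus $\halpha$ (agents moving before $t_1$ adopt action $1$, the only possible infection event occurs at epoch $t_1+1$, and the process then stabilizes), followed by Lemma~\ref{lem_5} and the ordering-symmetry count. The only cosmetic difference is that you establish the post-$t_1$ stabilization via an inline monotonicity invariant ($a_j r_j \leqslant \tau$ for every uninfected $j$), which reproduces in one unified argument the content of the paper's Lemmas~\ref{lem_2} and~\ref{lem_4}.
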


\noindent The proof of this theorem can be found in Appendix \ref{sec:appendix_C}. Next, we proceed to describe the limiting distribution of the action profile. We introduce the following notation in order to state our next result:
\begin{equation}\label{hbeta}
\hbeta=\min \{\lfloor(n-1)\tau \rfloor+1,\halpha+1\}.
\end{equation}

\begin{theorem}\label{thm_ac_3}
Suppose  $0<a\leqslant \tau$ and $a\neq 1$. Further, suppose $\tau\geqslant \frac{1}{n-1}$.  Then the limiting distribution of the action profile is given by	 
		\[
 \mathbb{P}(a_{N}(S_\infty)=\vec{x})= 
  \begin{cases} 
  1-\frac{\hat{\alpha}-\hat{\beta}+1}{n} & \text{if } \vec{x}=\vec{1}  , \\ \\
   \frac{1}{n \times \Mycomb[n-1]{m-1}} & \text{if } 
   \vec{x}\in A_m \text{ for some } m\in [\hat{\beta},\hat{\alpha}],\\ \\
   0 & \text{ otherwise. } 
  \end{cases}
\]
 			

 
\end{theorem}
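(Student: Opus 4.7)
The plan is to deduce Theorem~\ref{thm_ac_3} from Theorem~\ref{theo_2} by establishing that, for every $\uti \in N_\infty$, the limiting action profile $a_N(S_\infty(\uti))$ is a deterministic function of the limiting infected set $I(S_\infty(\uti))$, and then aggregating the probabilities given by Theorem~\ref{theo_2} over the subsets of $N$ that induce each action profile. This mirrors the connection between the ``infected-set'' and ``action-profile'' theorems already flagged by the authors in the discussion following Theorem~\ref{thm_ac_1}.

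The central step is the following structural claim: if $I = I(S_\infty(\uti))$ and $m = |I|$, then $a_i(S_\infty(\uti)) = 1$ for every $i \in I$, while $a_j(S_\infty(\uti)) = b_m$ for every $j \in N \setminus I$, where $b_m = 1$ when $m \leqslant \lfloor (n-1)\tau \rfloor$ and $b_m = \tau m/[m(1+\tau) - \tau(n-1)]$ when $m \geqslant \hbeta$. For $i \in I$, the conclusion is immediate from Observation~\ref{obs_3} together with Lemma~\ref{rem_0}, since each such $i$ is chosen at infinitely many epochs after their infection and plays $1$ at each. For uninfected $j$, the plan is to invoke Lemma~\ref{lem_1} and the continuity of the best-response map -- the denominator of $r_j$ is bounded below by $m \geqslant 1$ once all infected agents have settled at $1$ -- to conclude that the limiting values satisfy the fixed-point system
\[
a_j^* = \min\left\{1, \frac{\tau\bigl(m + \sum_{k \in N \setminus (I \cup \{j\})} a_k^*\bigr)}{m}\right\}, \qquad j \in N \setminus I.
\]
A short case analysis then rules out asymmetric solutions: writing $S = \sum_{\ell \in N \setminus I} a_\ell^*$, the existence of some uninfected $j$ with $a_j^* = 1$ forces $S \geqslant m/\tau + 1 - m$, whereas the existence of some uninfected $k$ with $a_k^* < 1$ forces $S < m/\tau + 1 - m$, a contradiction. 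Only the two symmetric solutions remain, and a direct computation identifies them as the two regimes above; admissibility of each is governed precisely by the stated inequality on $m$.

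Granting the structural claim, Theorem~\ref{thm_ac_3} follows by bookkeeping against Theorem~\ref{theo_2}. A one-line estimate using $\tau \geqslant (n-1)^{-1}$ and $a < 1$ gives $\halpha \leqslant n-1$, so the support of $|I(S_\infty)|$ lies in $\{1, \ldots, \halpha\}$ per Theorem~\ref{theo_2}. For each $m \in [\hbeta, \halpha]$ and each $\vec{x} \in A_m$, the structural claim identifies $\{a_N(S_\infty) = \vec{x}\}$ with $\{I(S_\infty) = J_{\vec{x}}\}$ for the unique subset $J_{\vec{x}} = \{i : x_i = 1\}$, giving $\mathbb{P}(a_N(S_\infty) = \vec{x}) = 1/(n \Mycomb[n-1]{m-1})$. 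The event $\{a_N(S_\infty) = \vec{1}\}$ is the disjoint union of $\{I(S_\infty) = \{1\}\}$ and $\{|I(S_\infty)| = m\}$ for $m \in [2, \hbeta - 1]$, and summing the corresponding probabilities from Theorem~\ref{theo_2} yields $1 - (\halpha-1)/n + (\hbeta-2)/n = 1 - (\halpha - \hbeta + 1)/n$, matching the stated value. The main obstacle is the uniqueness part of the structural claim -- ruling out asymmetric fixed points of the uninfected-agent best-response system -- since the existence/convergence side is handled cleanly by Lemma~\ref{lem_1} and the continuity of the best-response map near the limit.
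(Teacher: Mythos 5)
Your proposal is correct, and the final aggregation against Theorem~\ref{theo_2} is exactly the bookkeeping the paper performs; the genuine difference lies in how you establish the structural claim that all uninfected agents share a common action limit. The paper gets this in three stages: Lemma~\ref{ac_1} is a time-induction showing that, among uninfected agents, the one who updated most recently has the weakly largest action; Lemma~\ref{ac_2} converts this into an $\epsilon$-argument proving the limits coincide; and Lemma~\ref{2} then computes the common value $\gamma$ by passing to the limit in the best response, splitting on whether $(n-1)\tau$ exceeds $|I(S_\infty)|$. You replace all three with a single static argument: since the full profile converges (Lemma~\ref{lem_1}), each uninfected agent is selected infinitely often, and the best response is continuous in the relevant range (the denominator of $r_j$ being bounded below by $m\geqslant 1$ once the infected agents have settled at $1$), the limit vector must solve the fixed-point system $a_j^*=\min\{1,\tau(m+S-a_j^*)/m\}$, and your dichotomy on $S=\sum_{\ell\notin I}a_\ell^*$ correctly excludes mixed solutions, leaving only the two symmetric ones with admissibility governed by $m\lessgtr (n-1)\tau$ --- which matches the paper's $\gamma$. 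Your route is more self-contained and avoids the somewhat delicate induction of Lemma~\ref{ac_1}, at the cost of leaning entirely on the prior convergence result; the paper's monotonicity lemma yields finer finite-time information (an ordering of uninfected agents' actions by last update time) that your argument does not recover but that is not needed for the theorem. Both approaches then identify each $\vec{x}\in A_m$ with the unique infected set $J_{\vec{x}}$ and sum the $\vec{1}$-cases over $\{1\}$ and $m\in[2,\hbeta-1]$, and your arithmetic $1-(\halpha-1)/n+(\hbeta-2)/n=1-(\halpha-\hbeta+1)/n$ is the same as the paper's.
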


\noindent The proof of the theorem can be found in Appendix \ref{sec:appendix_E}.

\begin{remark}
If one sets $a=0$ in the conclusion of Theorem \ref{theo_2}, one gets back the conclusion of Theorem \ref{theo_1} for $\tau\geqslant (n-1)^{-1}$. However, Theorem \ref{theo_1} is more general in terms of its coverage of the values of $\tau$. In a similar manner, setting $a=0$ in Theorem \ref{thm_ac_3} yields Theorem \ref{thm_ac_1} for the case of $\tau\geqslant (n-1)^{-1}$.


\end{remark}

Discussions of findings of a flavour similar to those in \S\ref{subsec:a=0} can be included here as well. Even if $J_{1}$ and $J_{2}$ are two \emph{different} subsets of $N$, Theorem~\ref{theo_2} shows that the probabilities $\mathbb{P}(I(S_{\infty})=J_{1})$ and $\mathbb{P}(I(S_{\infty})=J_{2})$ are the same as long as $J_{1}$ and $J_{2}$ have the same cardinality and either both contain $1$ or neither does. Summing over all subsets of $N$ that contain $1$ and are of cardinality $m$, we obtain $\mathbb{P}\left(\left|I(S_{\infty})\right|=m\right) = n^{-1}$ for \emph{each} $2 \leqslant m \leqslant \halpha$. Likewise, for any two \emph{different} ordered tuples $\vec{x}$ and $\vec{y}$, Theorem~\ref{thm_ac_3} shows that the probabilities $\mathbb{P}(a_{N}(S_\infty)=\vec{x})$ and $\mathbb{P}(a_{N}(S_\infty)=\vec{y})$ are the same as long as both $\vec{x}$ and $\vec{y}$ belong to the same $A_{m}$. Summing over all members of an $A_{m}$ yields $\mathbb{P}(a_{N}(S_\infty)\in A_{m}) = n^{-1}$ for \emph{every} $\hbeta \leqslant m \leqslant \halpha$.

\subsection{Results when $\tau<a<1$.}\label{sec_t<a} 
In this subsection, we consider the scenario where the (common) initial action $a$ is strictly greater than $\tau$. We introduce the following notations to facilitate the stating the results that follow: 
Let 
\begin{equation}\label{talpha_balpha}
\talpha=\max\left\{1, \left\lceil \frac{1-(n-1)a\tau}{\tau(1-a)}\right\rceil\right\} \quad\mbox{ and }\quad \balpha =\left\lfloor \frac{(n-1)a\tau}{a-\tau(1-a)}\right\rfloor+1.
\end{equation}
Note that by our assumption on $\tau$, we have $\balpha \geqslant 2$.
 
\begin{theorem}\label{theo_3}
Suppose $\frac{1}{n-1}\leqslant \tau<a<1$. If $\talpha+1 \leqslant \balpha$, the limiting distribution of the infected set is given by	 	 
\[
		\mathbb{P}(I(S_\infty)=J)= 
		\begin{cases} 
			1-\frac{\tilde{\alpha}-1}{n} & \text{if } J=\{1\}  , \\ \\
			\frac{1}{n \times \Mycomb[n-1]{m-1}} & \text{if } 
			1\in J \text{ and } |J|=m \text{ where } m\in [2,\talpha], \\ \\
			0 &  \text{ otherwise, }
		\end{cases}
		\]


 
whereas if $2\leqslant \balpha< \talpha+1$, the limiting distribution is given by
	\[
	\mathbb{P}(I(S_\infty)=J)= 
	\begin{cases} 
		1-\frac{\tilde{\alpha}-1}{n} & \text{if } J=\{1\}  , \\ \\
		\frac{1}{n \times \Mycomb[n-1]{m-1}} & \text{if } 
		1\in J \text{ and } |J|=m \text{ where } m\in[2, \balpha-1] \\ \\
		\frac{\eta(\talpha,\balpha,n)}{n-1} & \text{if } 1\in J \text{ and } |J|=n-1\\ \\
		\frac{\talpha-(\balpha-1)}{n}-\eta(\talpha,\balpha,n) & \text{if } |J|=n, \text{ i.e., } J=N,\\ \\
		0 &  \text{ otherwise,}
	\end{cases}
	\]

\noindent where $\eta(\talpha,\balpha,n)=\frac{(n-1)!}{n^3}\sum_{w=\balpha-1}^{\talpha-1}\frac{1}{(n-w-2)!}\sum_{t= w+1}^{\infty}  \left(\frac{\stirling{t-1}{w}}{n^{t-1}}\right)$, and $\stirling{p}{q}$ is the Stirling number of the second kind with parameters $p$ and $q$.  
\end{theorem}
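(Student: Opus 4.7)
The plan is to decompose the agent sequence $\uti$ into three regimes based on the first epoch $t_1$ at which agent~$1$ is chosen: ``Phase~A'' ($t \in [0, t_1 - 1]$), ``Phase~B'' ($t = t_1$), and ``Phase~C'' ($t > t_1$). For Phase~A, I first show that every chosen agent $i \in \{2, \ldots, n\}$ (necessarily uninfected, since agent~$1$ has not been chosen yet) has best response $b_i = 1$; this essentially follows from the bound $\tau(n-1) \geq 1$ dominating every risk ratio that appears. A direct computation on every Phase-A intermediate state then shows that no infection check ever fires: for the uninfected $j$ still at action $a$, the exposure $a \cdot r_j$ is bounded by a root-analysis of the quadratic $a^2 - \tau a (n - 1 - k) - \tau k$ (with $k$ the number of $\{2,\ldots,n\}$-agents playing $1$), which is non-positive exactly under our standing assumption $\tau \geq (n-1)^{-1}$; for the uninfected $j$ already at $1$, the analogous inequality reduces to $a \leq 1$. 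Consequently, just before agent~$1$ is first chosen, the state is entirely determined by $N_1$: the agents of $N_1$ play $1$, all remaining agents (including agent~$1$) play $a$, and $I = \{1\}$.

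For Phase~B, agent~$1$ updates to $1$, and at the intermediate state $\hat{S}_{t_1}$ the exposures split into two types: each uninfected agent in $N_1$ (playing $1$) has exposure $[|N_1| + (n - 1 - |N_1|)a]^{-1}$, which exceeds $\tau$ precisely when $|N_1| < \talpha$; each uninfected agent outside $N_1$ (playing $a$) has exposure $a/[1 + |N_1| + (n - 2 - |N_1|)a]$, which a short calculation using $\tau \geq (n-1)^{-1}$ and $a < 1$ shows is always at most $\tau$. Hence $I(S_{t_1 + 1}) = \{1\} \cup N_1$ whenever $1 \leq |N_1| \leq \talpha - 1$, and $I(S_{t_1 + 1}) = \{1\}$ otherwise. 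Set $k = |I(S_{t_1 + 1})|$. The exposure of any remaining uninfected at $S_{t_1 + 1}$ equals $ak/[k + (n - k - 1)a]$, which exceeds $\tau$ iff $k \geq \balpha$. In the subcritical regime $k < \balpha$, every subsequent uninfected best response satisfies $b \geq a$ (since $\tau/r_j \geq a$ is precisely the subcritical inequality), so no later action update ever lowers a denominator and no infection ever fires; therefore $|I(S_\infty)| = k$. This disposes of all of Case~1 (since $\talpha + 1 \leq \balpha$ forces $k \leq \talpha < \balpha$) and the range $l \in [1, \balpha - 2]$ of Case~2.

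The supercritical regime $k \geq \balpha$, relevant only in Case~2 for $l \in [\balpha - 1, \talpha - 1]$, requires a short two-epoch analysis. At epoch $t_1 + 1$, either an infected agent is chosen (probability $k/n$), in which case no action changes but the infection check at $\hat{S} = S$ fires on every uninfected and $|I|$ jumps to $n$; or an uninfected $j_1$ is chosen (probability $(n - k)/n$), in which case $j_1$ updates to $b_{j_1} = \tau/r_{j_1} < a$, strictly decreasing the denominators of $r_j$ for every other uninfected $j$ and pushing their already-supercritical exposures even higher, so all of them are infected while $j_1$ itself remains safe and $|I| = n - 1$. At $S_{t_1 + 2}$ one has $r_{j_1} = 1$ (every other agent is infected) and $a_{j_1} r_{j_1} = b_{j_1} > \tau$; a simple case check gives that $j_1$ survives iff it is chosen at $t_1 + 2$ (probability $1/n$), since otherwise either the chosen agent's action does not change (the check fires) or a newly-infected agent updates from $a$ to $1$ (both numerator and denominator of $r_{j_1}$ increase equally, so $r_{j_1}$ remains $1$ and the check fires again). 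Hence $\mathbb{P}(|I(S_\infty)| = n - 1 \mid |N_1| = l) = (n - 1 - l)/n^2$ for $l \in [\balpha - 1, \talpha - 1]$. Finally, Lemma~\ref{lem_5} together with symmetry among $\{2, \ldots, n\}$ assembles the joint distribution of $I(S_\infty)$; re-expressing each conditional probability via the surjection count $\mathbb{P}(|N_1| = l, t_1 = t) = \frac{(n-1)!}{(n-1-l)!}\stirling{t}{l}/n^{t+1}$ and summing over $t$ produces exactly the Stirling-sum form of $\eta(\talpha, \balpha, n)$ stated in the theorem (the identity $\sum_{s \geq l}\stirling{s}{l}/n^s = (n-l-1)!/(n-1)!$ also confirms the alternative closed form $\sum_{l=\balpha-1}^{\talpha-1}(n-1-l)/n^3$). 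I expect the main obstacle to be the Phase-C bookkeeping: checking case-by-case on whether the chosen agent's action changes or not that the dichotomy ``all uninfected but $j_1$ infected'' at $t_1 + 1$ and ``$j_1$ saved iff chosen'' at $t_1 + 2$ is truly exhaustive, and that no later epoch can perturb these outcomes.
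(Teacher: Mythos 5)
Your proposal is correct and follows essentially the same route as the paper's proof: condition on $N_1$ and on the identities of $\uti_{t_1+1}$ and $\uti_{t_1+2}$, carry out the same deterministic case analysis (all pre-$t_1$ updaters switch to action $1$, the threshold $\talpha$ governs what happens at epoch $t_1$, the threshold $\balpha$ governs epoch $t_1+1$, and a two-epoch endgame decides whether the lone uninfected survivor is saved by being chosen again), and then assemble the distribution via Lemma~\ref{lem_5} and symmetry over $\{2,\ldots,n\}$. The one place you go beyond the paper is the closed form $\eta(\talpha,\balpha,n)=\sum_{l=\balpha-1}^{\talpha-1}(n-1-l)/n^{3}$, obtained either directly from $\mathbb{P}\left(|I(S_\infty)|=n-1\,\middle|\,|N_1|=l\right)=(n-1-l)/n^{2}$ or from the Stirling generating function; this is a correct simplification of the infinite sum that the paper leaves unevaluated.
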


\noindent The proof of this theorem can be found in Appendix \ref{sec:appendix_C}. As in the previous subsections, we now proceed to explore the limiting distribution of the action profile.  The following notations will be helpful in presenting the results:
\begin{equation}\label{tbeta_bbeta}
\tbeta=\min \{\lfloor(n-1)\tau \rfloor+1,\talpha+1\} \mbox{ and } \bbeta=\min \{\lfloor(n-1)\tau \rfloor+1,\balpha\}.
\end{equation}
 
\begin{theorem}\label{thm_ac_4}
	Suppose $\frac{1}{n-1}\leqslant \tau<a<1$. If $\talpha+1 \leqslant \balpha$, the limiting distribution of the action profile is given by

\[
 \mathbb{P}(a_{N}(S_\infty)=\vec{x})= 
  \begin{cases} 
  1-\frac{\tilde{\alpha}-\tilde{\beta}+1}{n} & \text{if } \vec{x}=\vec{1}  , \\ \\
   \frac{1}{n \times \Mycomb[n-1]{m-1}} & \text{if } 
   \vec{x}\in A_m \text{ for some } m\in [\tilde{\beta},\tilde{\alpha}],\\ \\
   0 & \text{ otherwise, } 
  \end{cases}
\]

whereas if $2\leqslant \balpha< \talpha+1$, the limiting distribution is given by
\[
 \mathbb{P}(a_{N}(S_\infty)=\vec{x})= 
  \begin{cases} 
 1+\frac{\bbeta-\balpha}{n}-\eta(\talpha,\balpha,n) & \text{if } \vec{x}=\vec{1}  , \\ \\
   \frac{1}{n \times \Mycomb[n-1]{m-1}} & \text{if } 
   \vec{x}\in A_m \text{ for some } m\in [\bbeta,\balpha-1],\\ \\
   \frac{\eta(\talpha,\balpha,n)}{n-1}& \text{if }  \vec{x}\in A_{n-1}, \\ \\ 
   0 & \text{ otherwise,} 
  \end{cases}
\]
\noindent where $\eta(\talpha,\balpha,n)=\frac{(n-1)!}{n^3}\sum_{w=\balpha-1}^{\talpha-1}\frac{1}{(n-w-2)!}\sum_{t= w+1}^{\infty}  \left(\frac{\stirling{t-1}{w}}{n^{t-1}}\right)$, and $\stirling{p}{q}$ is the Stirling number of the second kind with parameters $p$ and $q$.  

\end{theorem}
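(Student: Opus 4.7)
My plan is to reduce Theorem~\ref{thm_ac_4} to Theorem~\ref{theo_3} by establishing that, deterministically in the agent sequence $\uti \in N_\infty$, the limiting action profile $a_{N}(S_{\infty})$ is a function of the limiting infected set $I(S_{\infty})$. Since $I(S_{t})$ is monotonically non-decreasing (Observation~\ref{obs_4}) and bounded by $N$, it stabilises at some $I$ with $|I| = m$ in finite time; since each agent is chosen infinitely often, Observation~\ref{obs_3} then forces every $i \in I$ to play $1$ in the limit. The remaining task is to identify the common limiting action of the uninfected agents.

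By continuity of the best-response map $b_{i}$ of Lemma~\ref{rem_0} in the state variable, and since each uninfected $i$ is chosen infinitely often, the limit $a_{N}(S_{\infty})$ must be a fixed point of the best-response dynamics restricted to the uninfected agents. I first exhibit the symmetric fixed point by substituting $a_{i}(S_{\infty}) = x$ for all uninfected $i$; solving $x = \min\{1, \tau(m + (n-m-1)x)/m\}$ yields
\[
x_{m} = \begin{cases} 1 & \text{if } m \leqslant \lfloor (n-1)\tau \rfloor, \\ \tau m/[(1+\tau)m - \tau(n-1)] & \text{if } m \geqslant \lfloor (n-1)\tau \rfloor + 1. \end{cases}
\]
To rule out asymmetric fixed points in the regime $m \geqslant \lfloor (n-1)\tau \rfloor + 1$, I would suppose $k \geqslant 1$ uninfected agents play $1$ while $\ell = n - m - k$ play a value $y < 1$; the fixed-point equation for the latter gives $y = \tau(m + k)/(m - \tau(\ell - 1))$ and hence the identity $m + k + \ell y = y(m+\tau)/\tau$, which upon substitution into the condition $(k-1) + \ell y \geqslant m(1-\tau)/\tau$ (required for agents playing $1$) collapses to $y \geqslant 1$, contradicting $y < 1$. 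Hence $a_{N}(S_{\infty})$ is the unique symmetric fixed point: it equals $\vec{1}$ when $m \leqslant \lfloor (n-1)\tau \rfloor$, and equals the unique element of $A_{m}$ with $1$'s at the positions of the infected agents otherwise.

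Combining this deterministic map with Theorem~\ref{theo_3} yields the stated formulas. In the first case ($\talpha + 1 \leqslant \balpha$), $|I(S_{\infty})| \in \{1\} \cup [2, \talpha]$ and the threshold $\tbeta = \min\{\lfloor (n-1)\tau \rfloor + 1, \talpha + 1\}$ separates the $\vec{1}$-regime ($m < \tbeta$) from the $A_{m}$-regime ($m \geqslant \tbeta$); summing $\mathbb{P}(I(S_{\infty}) = \{1\}) + \sum_{m=2}^{\tbeta-1} \mathbb{P}(|I(S_{\infty})| = m) = 1 - (\talpha - \tbeta + 1)/n$ gives the mass at $\vec{1}$, while for $\vec{x} \in A_m$ with $m \in [\tbeta, \talpha]$ the unique inducing infected set has probability $1/(n \Mycomb[n-1]{m-1})$. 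In the second case ($2 \leqslant \balpha < \talpha + 1$), $|I(S_{\infty})| \in \{1\} \cup [2, \balpha - 1] \cup \{n-1, n\}$ and the analogous threshold is $\bbeta$; the value $m = n$ produces $\vec{1}$, the value $m = n-1$ produces an element of $A_{n-1}$ (since $\tau < 1$ forces $n-1 > \tau(n-1)$), and values in $[\bbeta, \balpha-1]$ produce elements of the corresponding $A_m$, while values in $[2, \bbeta - 1]$ contribute to $\vec{1}$. Summing the appropriate masses from Theorem~\ref{theo_3} gives $1 + (\bbeta - \balpha)/n - \eta(\talpha, \balpha, n)$ at $\vec{1}$, $\eta(\talpha, \balpha, n)/(n-1)$ on each of the $n-1$ elements of $A_{n-1}$, and $1/(n \Mycomb[n-1]{m-1})$ on each element of $A_m$ for $m \in [\bbeta, \balpha - 1]$.

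The main obstacle is the uniqueness of the best-response fixed point; without it the limit guaranteed by Lemma~\ref{lem_1} could in principle be an asymmetric configuration inaccessible to the above probability computation. The algebraic collapse to $y \geqslant 1$, together with the continuity argument promoting the limit to a fixed point of the best-response map, seals this uniqueness and hence the theorem.
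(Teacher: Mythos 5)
Your proposal is correct, and the final bookkeeping step --- pushing the distribution of $I(S_\infty)$ from Theorem~\ref{theo_3} through the map $m \mapsto$ (limiting profile) with the thresholds $\tbeta$ and $\bbeta$ --- matches the paper's computation exactly, including the edge cases where $[\tbeta,\talpha]$ or $[\bbeta,\balpha-1]$ is empty. Where you genuinely diverge is in the structural step that all uninfected agents converge to a common action. The paper proves this dynamically: Lemma~\ref{ac_1} is an inductive comparison showing that, once the infected set has stabilised, the most recently updated uninfected agent always holds the weakly largest action; Lemma~\ref{ac_2} then combines this with the monotonicity of actions (Claim~\ref{cl_3}) and an $\epsilon$-argument to force a common limit $\gamma$; and Lemma~\ref{2} identifies $\gamma$ by passing to the limit in the best-response relation. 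You instead note that the limit guaranteed by Lemma~\ref{lem_1} must be a fixed point of the (continuous) best-response map for each uninfected agent --- continuity holds at the limit because the infected set is nonempty and its members' actions tend to $1$, so $r_i(S_\infty)>0$ --- and then prove by hand that the only fixed point is the symmetric one. Your algebraic collapse of the mixed configuration to $y\geqslant 1$ checks out, and the implicit assumption that all uninfected agents playing below $1$ at a fixed point share a common value is harmless, since the fixed-point equation forces $a_i=\tau T/(m+\tau)$ with $T$ the total action, independently of $i$. Your route is arguably cleaner in that it replaces the delicate induction of Lemma~\ref{ac_1} by a static uniqueness argument; the paper's route yields the slightly stronger dynamic information that the ordering of the uninfected agents' actions tracks the order in which they update, which it reuses elsewhere.
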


\noindent The proof of this theorem can be found in Appendix \ref{sec:appendix_E}. Once again, observations similar to those made in \S\ref{subsec:a=0} and \S\ref{sec:a<t} can be noted here for Theorem~\ref{theo_3} and Theorem~\ref{thm_ac_4} as well, but we do not elaborate upon them to avoid the possibility of sounding repetitive. 

The next two theorems deal with situation when $\tau$ is less than or equal to $\frac{a}{(n-1)}$. Theorem \ref{theo_4} characterizes the limiting distribution of the infected set and Theorem \ref{thm_ac_5} characterizes the limiting distribution of the action profile.

\begin{theorem}\label{theo_4} Suppose $\tau<a<1$. If $\tau= \frac{a}{n-1}$, the limiting distribution of the infected set is given by
\begin{center}
    $\mathbb{P}(I(S_\infty)=N)= 
   1$,
 \end{center} 
whereas if $\tau < \frac{a}{n-1}$, the limiting distribution is given by
\[
 \mathbb{P}(I(S_\infty)=J)= 
  \begin{cases} 
   \frac{1}{n^2} & \text{if } 1\in J \text{ and } |J|=n-1,
   \\ \\
    1-\frac{n-1}{n^2} & \text{if } |J|=n, \text{ i.e., } J=N, \\ \\
   0 & \text{ otherwise. } 
  \end{cases}
\]    

\end{theorem}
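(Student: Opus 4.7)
The plan is to read off $I(S_\infty)$ directly from an analysis of at most the first two epochs of the SVSP, using Lemma~\ref{rem_0} to compute best responses and the viral-exposure formula to decide which agents get newly infected in each intermediate state. The two regimes $\tau = a/(n-1)$ and $\tau < a/(n-1)$ will be handled separately but share the same skeleton: the process either explodes to $I = N$ within one or two steps, or else gets trapped in a state of the form $N \setminus \{i\}$ in which agent $i$ is permanently protected because the strict inequality defining infection fails with equality.

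For the boundary case $\tau = a/(n-1)$, I would first observe that at $S_0$ every uninfected agent $j$ satisfies $r_j(S_0) = 1/(n-1)$ and $a_j(S_0) r_j(S_0) = a/(n-1) = \tau$, so no one becomes infected from $S_0$ alone. Moreover, such an uninfected $j$, if chosen, plays $b_j = \min\{1,\tau(n-1)\} = a$ by Lemma~\ref{rem_0}, leaving the state unchanged. The only productive event is $v_t = 1$: agent $1$ then updates to action $1$, and for every $j \neq 1$ the new exposure is $a/(1 + (n-2)a) > a/(n-1) = \tau$ (the inequality uses $a < 1$, so $1+(n-2)a < n-1$), infecting every other agent and stabilizing $I$ at $N$. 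Since $\{t : v_t = 1\} \neq \emptyset$ almost surely by Remark~\ref{rem_3}, this gives $\mathbb{P}(I(S_\infty) = N) = 1$.

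For $\tau < a/(n-1)$, I would condition on $v_0$. If $v_0 = 1$ (probability $1/n$), the same computation as above gives $I(S_1) = N$ and we are done. If $v_0 = i \in \{2,\ldots,n\}$ (probability $1/n$ each), agent $i$ plays $b_i = \tau(n-1) < a$; a short algebraic check using $\tau(n-1) < a$ shows that in the intermediate state every other uninfected $j$ has
\[
a_j r_j = \frac{a^2}{(n-2)a + \tau(n-1)} > \tau,
\]
while agent $i$ itself has $a_i r_i = \tau(n-1) \cdot 1/(n-1) = \tau$, which is not strictly greater. Hence $I(S_1) = N \setminus \{i\}$ with $a_i(S_1) = \tau(n-1)$. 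Conditioning next on $v_1$: if $v_1 = i$ (probability $1/n$), then because $I(S_1) \setminus \{i\} = N \setminus \{i\}$ the numerator and denominator in $r_i(S_1)$ are sums over the same index set, so $r_i(S_1) = 1$ and $b_i = \tau$; the exposure on $i$ remains exactly $\tau$ and never strictly exceeds it thereafter. If $v_1 \neq i$ (probability $(n-1)/n$), the chosen (infected) agent updates to $1$, $r_i$ is still $1$, and $a_i r_i = \tau(n-1) > \tau$ infects $i$, yielding $I(S_2) = N$. Summing up, $\mathbb{P}(I(S_\infty) = N \setminus \{i\}) = 1/n^2$ for each $i \in \{2,\ldots,n\}$, contributing total mass $(n-1)/n^2$, and the remainder $1 - (n-1)/n^2$ is assigned to $I(S_\infty) = N$.

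The main obstacle is the ``trap'' argument verifying the second bullet above: once the system reaches a configuration with $I = N \setminus \{i\}$ and $a_i = \tau$, no subsequent update can ever infect $i$. The key observation is that because $I \setminus \{i\} = N \setminus \{i\}$ at every future epoch, the numerator and denominator of $r_i$ are identical sums indexed over the same set regardless of how the infected agents' actions evolve, forcing $r_i \equiv 1$; together with $a_i \equiv \tau$ from epoch $2$ onward this pins $a_i r_i$ at exactly $\tau$, and the strict inequality required for infection can never hold. The remaining computations are routine verifications of the two inequalities $a/[1+(n-2)a] > \tau$ and $a^2/[(n-2)a + \tau(n-1)] > \tau$, each a one-line manipulation using $\tau(n-1) < a < 1$.
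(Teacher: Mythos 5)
Your proposal is correct and follows essentially the same route as the paper's proof: condition on whether agent $1$ moves first (in which case the whole population is infected in one step), and otherwise on whether the first mover $i\neq 1$ is also the second mover (trapping $I(S_\infty)=N\setminus\{i\}$ via $r_i\equiv 1$, $a_i\equiv\tau$) or not (in which case $a_i=\tau(n-1)>\tau$ forces $i$'s infection), yielding the same $\frac{1}{n^2}$ per excluded agent. The only cosmetic difference is that you verify the stabilization and trap claims by direct computation where the paper invokes its Observations and Lemma~\ref{lem_2}.
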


\noindent The proof of this theorem can be found in Appendix \ref{sec:appendix_C}.

\begin{theorem}\label{thm_ac_5}
Suppose $\tau<a<1$. If $\tau= \frac{a}{n-1}$, the limiting distribution of the action profile is given by
 \begin{center}
    $\mathbb{P}(a_{N}(S_\infty)=\vec{1})= 
   1$,
 \end{center} 
	whereas if $\tau< \frac{a}{n-1}$, the limiting distribution becomes
		\[ 
  		\mathbb{P}(a_{N}(S_\infty)=\vec{x})=
  	\begin{cases}
   1-\frac{n-1}{n^2} &\text{ if } \vec{x}=\vec{1}, \\ \\
  \frac{1}{n^2}& \text{ if }  \vec{x}\in A_{n-1}, \\ \\
  	0 & \text{ otherwise. }
  	
  	\end{cases}
\] 
\end{theorem}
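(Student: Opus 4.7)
The plan is to derive Theorem \ref{thm_ac_5} directly from Theorem \ref{theo_4} by showing that, on each event of the form $\{I(S_\infty) = J\}$, the limiting action profile $a_N(S_\infty)$ is in fact a deterministic function of $J$; the probabilities will then transfer across. I would first handle the simpler case $\tau = a/(n-1)$. Here Theorem \ref{theo_4} gives $I(S_\infty) = N$ almost surely. Once every agent is infected, Lemma \ref{rem_0} ensures that each agent's best response equals $1$; since each $i \in N$ is chosen infinitely often on $N_\infty$, every agent eventually updates to $1$ and, by Observation \ref{obs_3}, retains action $1$ for all subsequent epochs. Hence $a_N(S_\infty) = \vec{1}$ almost surely.

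For the case $\tau < a/(n-1)$, I would condition on $I(S_\infty)$ as described by Theorem \ref{theo_4}. On the event $\{I(S_\infty) = N\}$ (of probability $1 - (n-1)/n^2$), the same argument as above yields $a_N(S_\infty) = \vec{1}$. On the event $\{I(S_\infty) = J\}$ with $1 \in J$ and $|J| = n-1$, let $j^\ast$ denote the unique agent in $N \setminus J$. By Observation \ref{obs_3}, there is a finite random epoch $T$ after which every agent in $J$ plays action $1$ forever, so for all $t \geqslant T$ the viral exposure of $j^\ast$ equals $(n-1)/(n-1) = 1$. By Lemma \ref{rem_0}, the best response of $j^\ast$ is then $\min\{1,\tau\} = \tau$. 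Because $j^\ast$ is chosen infinitely often on $N_\infty$, it will eventually update to $\tau$; since $j^\ast$'s own coordinate does not enter $r_{j^\ast}$, any subsequent update (of $j^\ast$ itself, or of an infected agent whose best response is $1$) leaves the profile unchanged, so the profile stabilizes at the tuple in which $j^\ast$ plays $\tau$ and all others play $1$. Noting that for $m = n-1$ the quantity $\tau m/[(1+\tau)m - \tau(n-1)]$ simplifies to $\tau$, this limit belongs to $A_{n-1}$.

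Finally, the map $J \mapsto a_N(S_\infty)$ defined on the events $\{I(S_\infty) = J\}$ with $1 \in J$ and $|J| = n-1$ is a bijection onto $A_{n-1}$: each such $J$ determines a unique $j^\ast = N \setminus J \in \{2, \ldots, n\}$, and conversely each $\vec{x} \in A_{n-1}$ has a unique coordinate different from $1$, whose position identifies $j^\ast$. Transferring the probabilities given by Theorem \ref{theo_4} yields $\mathbb{P}(a_N(S_\infty) = \vec{x}) = 1/n^2$ for every $\vec{x} \in A_{n-1}$, while $\mathbb{P}(a_N(S_\infty) = \vec{1}) = \mathbb{P}(I(S_\infty) = N) = 1 - (n-1)/n^2$. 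The main (mild) obstacle is ensuring that the action profile is genuinely \emph{absorbing} at the claimed limit, i.e.\ that no subsequent update by any agent perturbs it; this is exactly what the stabilization argument above establishes, using that each agent either already plays their best response or will next update to a value equal to what they currently play.
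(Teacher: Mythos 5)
Your proof is correct, and its overall architecture matches the paper's: both reduce Theorem \ref{thm_ac_5} to Theorem \ref{theo_4} by showing that on each event $\{I(S_\infty)=J\}$ the limiting action profile is a deterministic function of $J$, and then transfer the probabilities across the resulting bijection between the sets $J$ of size $n-1$ containing $1$ and the tuples in $A_{n-1}$. Where you differ is in how the limiting action of the uninfected agent is pinned down. The paper invokes its general machinery from Appendix D --- Lemma \ref{ac_2} (all uninfected agents share a common action limit $\gamma$, all infected agents have limit $1$) and Lemma \ref{2} (which computes $\gamma=\frac{\tau m}{(1+\tau)m-\tau(n-1)}$, equal to $\tau$ when $m=n-1$) --- lemmas whose proofs require a genuine limiting argument because in general several uninfected agents approach $\gamma$ only asymptotically. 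You instead exploit the special structure of this case, namely that at most one agent $j^\ast$ remains uninfected: once all of $J$ has locked in action $1$ (Observation \ref{obs_3} plus the fact that each agent is chosen infinitely often on $N_\infty$), the exposure $r_{j^\ast}$ is exactly $1$, the best response is exactly $\tau$, and since $a_{j^\ast}$ does not enter $r_{j^\ast}$ the profile is absorbed after $j^\ast$'s next update. This is more elementary and self-contained (no appeal to Lemmas \ref{ac_1}, \ref{ac_2}, \ref{2}), and it yields the slightly stronger conclusion that the limit profile is attained in finite time; the paper's route buys uniformity with the proofs of Theorems \ref{thm_ac_1}--\ref{thm_ac_4}, where the multi-agent limit lemmas are genuinely needed.
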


\noindent The proof of this theorem can be found in Appendix \ref{sec:appendix_E}. The results in these final set of theorems are also consistent with our intuition. It says that if $\tau$ is small enough then the eventual infected set is, either of size $n-1$ or $n$ with probability 1. 

\section{Simulation Studies}\label{sec:simulation}
\ignore{
$n=4, a=0, \tau= 0.3, \alpha= min \{\lceil 1/\tau \rceil , n \}=4$

0.3063135 0.2499990 0.2485371 0.1951504 
 
1/4 1/4 1/4 1/4
 
$n=4, a=0, \tau= 0.4, \alpha= min \{\lceil 1/\tau \rceil , n \}=3$
 0.5116577 0.2499390 0.2384033 0.0000000
 
1/2 1/4 1/4 0

Theorem 3 sudden jump in the distribution- Describing this interesting phenomenon with simulation and some intuitive justification

Question: What else should we give as simulations? 

Sayar: I think non negligible initial contagion could invite unnecessary criticism that point out our shortcoming 
Response: ok agreed

Sayar: Should we give some result where a's are all random. Why does it always end spreading everywhere? 

Response: Thanks

How is this sounding?

Simulations 
 -a. When all a' are random it is spreading
 -b. For the regions we did not cover
 -c. What happens if taus are also random but maybe not the entire region of 0 to 1 with a fixed a?
}

\noindent So far, we have provided the asymptotic distribution of the cardinality of the final infected set but it remains to understand how fast such distributions or its close approximations are reached. In this section, we provide a very thorough exploration for the cardinality of the infected set up to a few epochs of time.  Note that, this required an exact enumeration of all possible sequences as just a random sampling would fail to ensure that all such sequences are representative enough and thus yield wrong empirical distributions. We set $n=5$ and various values of $a$ and $\tau$ in Table \ref{tab:table1}.

\begin{table}[!htbp]
\centering
\resizebox{\columnwidth}{!}{
\begin{tabular}{|l|l|l|l|l|}
 \hline
$a$ & Range of $\tau$ & $\tau$ & Theoretical distribution  & Empirical distribution               \\ \hline
0     & (0,0.25)                         & 0.12                    & (0.2,0.2,0.2,0.2,0.2)     & (0.3340,0.2,0.199,0.181,0.085)        \\
      & {[}0.25,0.334)                   & 0.3                     & (0.4,0.2,0.2,0.2,0)       & (0.42,0.2,0.199,0.181,0)             \\
      & {[}0.334,0.5)                    & 0.4                     & (0.6,0.2,0.2,0,0)         & (0.601,0.2,0.199.0,0)                \\
      & {[}0.5,1)                        & 0.6                     & (0.8,0.2,0,0,0)           & (0.8,0.2,0,0,0)                       \\\hline
0.2   & (0,0.05)                         & 0.02                    & (0,0,0,0.16,0.84)         & (0,0,0,0.16,0.84)                    \\
      & 0.05                             & 0.05                    & (0,0,0,0,1)               & (0,0,0,0,1)                          \\
      & (0.05,0.25)                      &                *         & *                         &                                      \\
      & {[}0.25,0.3125]                  & 0.3                     & (0.4,0.2,0.2,0.2,0)       & (0.42,0.2,0.199,0.181,0)             \\
      & (0.3125,0.4166]                  & 0.35                    & (0.6,0.2,0.2,0,0)         & (0.601,0.2,0.199,0,0)                \\
      & (0.4166,0.6249]                  & 0.5                     & (0.8,0.2,0,0,0)           & (0.8,0.2,0.0,0)                      \\
      & (0.6249,1)                       & 0.7                     & (1,0,0,0,0)               & (1,0,0,0,0)                        \\\hline  
0.35  & (0,0.0875)                       & 0.05                    & (0,0,0,0.16,0.84)         & (0,0,0,0.16,0.84)                    \\
      & 0.0875                           & 0.0875                  & (0,0,0,0,1)               & (0,0,0,0,1)                          \\
      & (0.0875,0.25)                    & *                       & *                         & *                                    \\
      & {[}0.25-0.2592]                  & 0.255                   & (0.4,0,0,0.048,0.552)     & (0.420,0,0.001, 0.086, 0.493)        \\
      & (0.2592,0.2985]                  & 0.27                    & (0.4, 0.2,0, 0.024,0.376) & (0.420, 0.200, 0.001, 0.062, 0.317)  \\
      & (0.2985,0.3134]                  & 0.3                     & (0.6, 0.2, 0.016,0.184)   & (0.601, 0.200, 0.001, 0.017, 0.182)  \\
      & (0.3134,0.3704)                  & 0.36                    & (0.5,0.2,0.2,0,0)         & (0.601,0.2,0.199,0,0)                \\
      & {[}0.3704.0.4878]                & 0.4                     & (0.8,0.2,0,0,0)           & (0.8,0.2,0,0,0)                      \\
      & (0.4878,1]                       & 0.5                     & (1,0,0,0,0)               & (1,0,0,0,0)                         \\ \hline 
0.45  & (0,0.1125)                       & 0.1                     & (0,0,0,0.16,0.84)         & (0,0,0,0.16,0.84)                    \\
      & 0.1125                           & 0.1125                  & (0,0,0,0,1)               & (0,0,0,0,1)                          \\
      & (0.1125,0.25)                    & *                       & *                         & *                                    \\
      & {[}0.25,0.2898]                  & 0.27                    & (0.4,0,0,0.048,0.552)     & (0.420, 0.000, 0.001, 0.086, 0.493)  \\
      & (0.2898,0.3103]                  & 0.3                     & (0.6,0,0, 0.04,0.360)     & (0.601, 0.000, 0.001, 0.041, 0.358)  \\
      & (0.3103,0.3448]                  & 0.32                    & (0.6,0.2,0, 0.016,0.184)  & (0.601, 0.200, 0.001, 0.017, 0.182)  \\
      & (0.3448,0.4225]                  & 0.4                     & (0.8,0.2,0,0,0)           & (0.8,0.2,0,0,0)                      \\
      & (0.4255,1)                       & 0.5                     & (1,0,0,0,0)               & (1,0,0,0,0)                        \\ \hline  
0.6   & (0,0.15)                         & 0.1                     & (0,0,0,0.16,0.84)         & (0,0,0,0.16,0.84)                    \\
      & 0.15                             & 0.15                    & (0,0,0,0,1)               & (0,0,0,0,1)                          \\
      & (0.15,0.25)                      & *                       & *                         & *                                    \\
      & {[}0.25,0.2777]                  & 0.26                    & (0.4,0,0, 0.048,0.552)    & (0.420, 0.000, 0.001, 0.086, 0.493)  \\
      & (0.2777,0.3124]                  & 0.3                     & (0.6,0,0, 0.04,0.360)     & (0.601, 0.000, 0.001, 0.041, 0.358)  \\
      & (0.3124,0.3571]                  & 0.34                    & (0.8,0,0,0.024,0.176)     & (0.800, 0.000, 0.000, 0.024, 0.176)  \\
      & (0.3571,1)                       & 0.4                     & (1,0,0,0,0)               & (1,0,0,0,0)                          \\ \hline
0.8   & (0,0.2)                          & 0.1                     & (0,0,0,0.16,0.84)         & (0,0,0,0.16,0.84)                    \\
      & 0.2                              & 0.2                     & (0,0,0,0,1)               & (0,0,0,0,1)                          \\
      & (0.2,0.25)                       & *                       & *                         & *                                    \\
      & {[}0.25,0.2631]                  & 0.26                    & (0.4,0,0, 0.024,0.576)    & (0.420, 0.000, 0.001, 0.086, 0.493)  \\
      & (0.2631,0.2777]                  & 0.27                    & (0.6,0,0, 0.04,0.360)     & (0.601, 0.000, 0.001, 0.041, 0.358)  \\
      & (0.2777,0.2941]                  & 0.28                    & (0.8,0,0,0.024,0.176)     & (0.800, 0.000, 0.000, 0.024, 0.176)  \\  \hline
1     & (0,0.25)                         & 0.12                    & (0,0,0,0.16,0.84)         & (0,0,0,0.16,0.84)                    \\
      & {[}0.25,1)                       & 0.5                     & (1,0,0,0,0)               & (1,0,0,0,0)                        \\ \hline
\end{tabular}}
\caption{Exact enumeration of empirical distribution after 10 epochs. Here $(p_1,\ldots,p_5)$ denotes $(\mathbb{P}(|I(S_{\infty})|=1),\ldots,\mathbb{P}(|I(S_{\infty})|=5)) $ and $*$ denotes that those regions are not covered by our theoretical results.}\label{tab:table1}
\end{table}

In particular, we see that after only 10 epochs for $n=5$, we are able to reach very good approximations to the final distributions. Moreover, the number of epochs to get a close approximation is much smaller. One can see that for the very first case of $a=0, \tau=0.12$ , we have pretty good convergence to the actual distribution in 10 steps. Generally speaking, for smaller $a$ and $\tau$ it takes longer to approximately reach close to the asymptotic distribution.

\section{Conclusion}\label{sec:discussion}

\noindent  In this article, we propose a graph-theoretic model to describe the spread of a contagious disease allowing for rational interventions from agents sitting at the nodes of the graph. The agents act based on a reasonable utility function and they may (or may not) get affected if their exposure increases. We obtain the asymptotic distribution of the cardinality of the infected set as well as that of the action profile. The results reveal several interesting patterns that exhibit proximity to uniformity, as well as results that are intuitively justifiable (such as if everyone's immunity ($\tau$ value) is low to begin with, then eventually the whole population gets affected). We have given an almost complete picture of how the values of $\tau$ and $a$ impact the final distribution of the infected set and the action profile. We also observe several fascinating phase transition phenomena in our results. Through exact enumeration of all possible sequences in which the agents are picked randomly, we also show that the empirical distributions obtained mimic the corresponding empirical distributions rather closely after only around $10$ epochs from the start of the process. 

However, there are a number of questions that remain to be addressed that seem to be beyond the scope of this paper. We give the readers a brief overview of the questions we intend to pursue for similar or related models in the future. So far, we have been unable to obtain, theoretically at least, the limiting distributions for the case where $a/(n-1)< \tau < 1/(n-1)$. While this is negligible for large $a$ (keeping in mind that $a \in [0,1]$, so that a large value of $a$ indicates its closeness to $1$) or for large $n$, our simulation studies show that there are possibly only two different distributions that could lie in this space. Second, we want to relax the restriction that all agents start with the same initial action $a$ or the same initial immunity $\tau$. Again, some numerical explorations revealed that for fixed $a$ and uniform $\tau$ or for fixed $\tau$ and uniform $a$, the contagion tends to spread throughout the population, yielding a rather interesting phenomenon. However, the current mathematical tools will fail to encompass such levels of randomness and proving the occurrence of the phenomena mentioned above rigorously may require completely different mathematical machinery. Finally, we would also like to explore the situation where $g_{ij}$ is allowed to change over time or have its own model of evolution. That would also bring significant changes to our computations and thus left for future persuasion.

\ignore{
Finding 1: The limiting distribution of the cardinality of the final set

Finding 2: Dynamic g has similar properties

Finding 3: Action set convergence 

Finding 4: Simulations 
 -a. When all a' are random it is spreading
 -b. For the regions we did not cover
 -c. What happens if taus are also random but maybe not the entire region of 0 to 1 with a fixed a?
 
}

\setcitestyle{numbers}
\newpage
\bibliography{diseasegraph}

\newpage
\begin{appendix}\label{sec:appendix}
\section{Proof of Lemma \ref{rem_0} and Lemma \ref{lem_1}} \label{sec:appendix_A}
\subsection{Proof of Lemma \ref{rem_0}}
\begin{proof} We provide the proof by distinguishing three cases as considered in the statement of the lemma. \\
	\textbf{Case 1.} $i \in I(S)$.\\
	By (\ref{ut}), $u_i\left(I(S),(a_i,a_{-i}(S))\right)=f(a_i)$. As $f$ is an increasing function, her best response is 
	$$b_i(S)= \argmax_{a_i\in [0,1]}f(a_i)=1.$$ 
	
	\noindent \textbf{Case 2.}  $i \notin S$ and $r_i(S)=0$. \\
	Since $r_i(S)=0$,  $a_ir_i(S) \leqslant \tau(i)$ for all $a_i\in [0,1]$, and hence by (\ref{ut}), $u_i\left(I(S),(a_i,a_{-i}(S))\right)=1+f(a_i)$ for all $a_i\in [0,1]$. This implies $b_i(S)=1$. \\
	\textbf{Case 3.}  $i \notin S$ and $r_i(S)>0$.\\
	Consider the quantity $\frac{\tau(i)}{r_i(S)}$. It follows from (\ref{ut}) that $u_i\left(I(S),(a_i,a_{-i}(S))\right)$ is increasing in $a_i$ in both the regions $\left[0,\min\left\{1,\frac{\tau(i)}{r_i(S)}\right\}\right]$ and $\left[\min\left\{1,\frac{\tau(i)}{r_i(S)}\right\},1\right]$. The maximum value of $u_i\left(I(S),(a_i,a_{-i}(S))\right)$ when $a_i$ lies in the region $\left[0,\min\left\{1,\frac{\tau(i)}{r_i(S)}\right\}\right]$  is $1+f\left(\min\left\{1,\frac{\tau(i)}{r_i(S)}\right\}\right)$ and that when $a_i$ lies  in the region $\left[\min\left\{1,\frac{\tau(i)}{r_i(S)}\right\},1\right]$ is $f(1)$. Because  $\tau(i) > 0$, we have $\frac{\tau(i)}{r_i(S)}> 0$. This, together with the fact that $f$ is strictly increasing, implies $f\left(\min\left\{1,\frac{\tau(i)}{r_i(S)}\right\}\right)> 0$. Hence, $1+f\left(\min\left\{1,\frac{\tau(i)}{r_i(S)}\right\}\right)>1$. Additionally, as $f(1)\leqslant 1$, we have $1+f\left(\min\left\{1,\frac{\tau(i)}{r_i(S)}\right\}\right)>f(1)$. Therefore, $u_i\left(I(S),(a_i,a_{-i}(S))\right)$ will be uniquely maximum at $\min\left\{1,\frac{\tau(i)}{r_i(S)}\right\}$ implying $b_i(S)=\min\left\{1,\frac{\tau(i)}{r_i(S)}\right\}$. Combining all these, we have the following form of $b_i(S)$.
	
	\[
	b_i(S)= 
	\begin{cases}
		1 & \text{if }  i \in I(S),\\
		1 & \text{if }  i \notin I(S) \text{ and } r_i(S)=0, \\
		\min \left\{1,\frac{\tau(i)}{r_i(S)}\right\} & \text{if }  i \notin I(S) \text{ and } r_i(S) \neq 0. \\
	\end{cases}
	\]
	
%

\end{proof}

\subsection{Proof of Lemma \ref{lem_1}}

\begin{proof}
 Let $\uti$ be a DVSP. It follows from the definition of $
 S(\uti)$ that $I(S_1(\uti))\subseteq I(S_2(\uti))\subseteq \cdots \subseteq I(S_t(\uti))$ for any $t\in \mathbb{N}_0$. As $I(S_t)\subseteq N$ for all $t$, this means $\lim_{t \to \infty} I(S_t(\uti))$  exists. Also, as $|N|$ is finite, there exists $t_0$ such that $I(S_{t_0}(\uti))=I(S_t(\uti))$ for all $t\geqslant t_0$. Consider $\bar{t}\geqslant t_0+1$. In the next claim, we show that for all $i\in N$, $a_i(S_{\bar{t}+1})\geqslant a_i(S_{\bar{t}})$.
 
 \begin{claim}\label{cl_3}
 	$a_i(S_{\bar{t}+1})\geqslant a_i(S_{\bar{t}})$ for all $i\in N$.
 \end{claim}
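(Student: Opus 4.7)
The plan is to split into cases based on whether agent $i$ was the chosen agent at epoch $\bar{t}$, and then, in the subcase where $i$ is chosen and uninfected, to leverage the stability of the infected set from time $t_0$ onwards to bound $a_i(S_{\bar{t}})$ above by the best response $b_i(S_{\bar{t}})$.

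First, if $\uti_{\bar{t}} \neq i$, then by Observation \ref{rem_0.5} we immediately have $a_i(S_{\bar{t}+1}) = a_i(S_{\bar{t}})$, so equality holds. Henceforth assume $\uti_{\bar{t}} = i$, so that $a_i(S_{\bar{t}+1}) = b_i(S_{\bar{t}})$ by the dynamics of the DVSP. Using the explicit form of the best response from Lemma \ref{rem_0}, I will distinguish three sub-cases. If $i \in I(S_{\bar{t}})$, then $b_i(S_{\bar{t}}) = 1 \geqslant a_i(S_{\bar{t}})$ because every action lies in $[0,1]$. Likewise, if $i \notin I(S_{\bar{t}})$ but $r_i(S_{\bar{t}}) = 0$, then $b_i(S_{\bar{t}}) = 1 \geqslant a_i(S_{\bar{t}})$.

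The main case, and the only one that requires actual work, is when $i \notin I(S_{\bar{t}})$ and $r_i(S_{\bar{t}}) > 0$. Here $b_i(S_{\bar{t}}) = \min\{1, \tau/r_i(S_{\bar{t}})\}$, and the only nontrivial situation is $\tau/r_i(S_{\bar{t}}) < 1$, in which case the desired inequality reduces to $a_i(S_{\bar{t}}) r_i(S_{\bar{t}}) \leqslant \tau$. To establish this, I will use the stability $I(S_{\bar{t}-1}) = I(S_{\bar{t}})$, which is valid since $\bar{t} - 1 \geqslant t_0$. Because no new agent became infected during epoch $\bar{t}-1$, and $i$ is uninfected at $S_{\bar{t}-1}$, the update rule forces $a_i(\hat{S}_{\bar{t}-1}) \, r_i(\hat{S}_{\bar{t}-1}) \leqslant \tau$. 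By Observation \ref{rem_0.25}(i) together with the assumed stability of the infected set, $I(\hat{S}_{\bar{t}-1}) = I(S_{\bar{t}-1}) = I(S_{\bar{t}})$ and $a_N(\hat{S}_{\bar{t}-1}) = a_N(S_{\bar{t}})$, and since $r_i$ depends only on these two pieces of data, I conclude $r_i(\hat{S}_{\bar{t}-1}) = r_i(S_{\bar{t}})$ and $a_i(\hat{S}_{\bar{t}-1}) = a_i(S_{\bar{t}})$. Substituting, $a_i(S_{\bar{t}}) \, r_i(S_{\bar{t}}) \leqslant \tau$, hence $a_i(S_{\bar{t}}) \leqslant \tau / r_i(S_{\bar{t}}) = b_i(S_{\bar{t}})$, which is what we needed.

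The main obstacle is not any single computation but the bookkeeping required to transfer the non-infection condition at the intermediate state $\hat{S}_{\bar{t}-1}$ to a statement about the state $S_{\bar{t}}$; this is handled cleanly by Observation \ref{rem_0.25}(i) once one notes that the infected set has already stabilized by epoch $\bar{t}-1 \geqslant t_0$.
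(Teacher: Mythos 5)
Your proof is correct and follows essentially the same route as the paper's: reduce to the chosen agent, dispose of the infected and zero-exposure cases via $b_i=1$, and in the remaining case use the stability $I(S_{\bar t-1})=I(S_{\bar t})$ to transfer the non-infection condition $a_i(\hat S_{\bar t-1})r_i(\hat S_{\bar t-1})\leqslant\tau(i)$ into $a_i(S_{\bar t})r_i(S_{\bar t})\leqslant\tau(i)$, hence $a_i(S_{\bar t})\leqslant b_i(S_{\bar t})$. The only cosmetic difference is that you invoke part (i) of Observation \ref{rem_0.25} where the paper uses part (iii) (i.e.\ $\hat S_{\bar t-1}=S_{\bar t}$), which are interchangeable here.
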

\noindent \textbf{Proof of the claim:} Let $\uti_{\bar{t}}=j$. By the definition of the process, for any other agent $i$ we have  $a_i(S_{\bar{t}+1})= a_i(S_{\bar{t}})$, and hence  the claim holds for them. We proceed  show that the claim holds for agent $j$. Recall that $a_j(\hat{S}_{{\bar{t}}})=a_j(S_{{\bar{t}+1}})$ (see Observation \ref{rem_0.25}). If $j\in I(S_{\bar{t}})$ then $a_j(\hat{S}_{\bar{t}})=1$ (see Observation \ref{obs_3}), and hence $a_j(\hat{S}_{\bar{t}})\geqslant a_j(S_{\bar{t}})$. 
 As $a_j(\hat{S}_{\bar{t}})=a_j(S_{\bar{t}+1})$, this means $a_j(S_{\bar{t}+1})\geqslant a_j(S_{\bar{t}})$. If $j\notin I(S_{\bar{t}})$, by the definition of the process, $j$ will choose her action as $a_j(\hat{S}_{\bar{t}})=b_j(S_{\bar{t}})$. If $b_j(S_{\bar{t}})=1$ then there is nothing to show. Assume $b_j(S_{\bar{t}})<1$. This implies  $a_j(\hat{S}_{\bar{t}})=\frac{\tau(j)}{r_j(S_{\bar{t}})}$. As $j\notin I(S_{\bar{t}})$, we have $a_j(\hat{S}_{{\bar{t}-1}})r_j(\hat{S}_{\bar{t}-1})\leqslant \tau(j)$. Also, as $\bar{t}\geqslant t_0+1$, it follows that $I(S_{\bar{t}-1})=I(S_{\bar{t}})$, which implies  $\hat{S}_{\bar{t}-1}=S_{\bar{t}}$ (see (iii) of Remark \ref{rem_0.25}) and hence $r_j(\hat{S}_{\bar{t}-1})=r_j(S_{\bar{t}})$. Combining this with the fact that $a_j(S_{{\bar{t}}})=a_j(\hat{S}_{{\bar{t}-1}})$, we obtain $a_j(S_{{\bar{t}}})\leqslant \frac{\tau(j)}{r_j(S_{\bar{t}})}$. Since $a_j(\hat{S}_{\bar{t}})=\frac{\tau(j)}{r_j(S_{\bar{t}})}$ and $a_j(S_{{\bar{t}}})\leqslant \frac{\tau(j)}{r_j(S_{\bar{t}})}$, we have  $a_j(\hat{S}_{{\bar{t}}})\geqslant a_j(S_{{\bar{t}}})$. This completes the proof of the claim. \hfill$\square$
 
 Since $a_i(S_t)\leqslant 1$ for all $i\in N$, by Claim \ref{cl_3}, we have the convergence of $a_{N}(S_t(\uti))$.
\end{proof}

\section{A few important lemmas}\label{sec:appendix_B}

\begin{lemma}\label{lem_2}
	Suppose $\uti\in N_\infty$ and let  $\bar{t}\in \mathbb{N}_0$ be such that either
	\[\uti_{\bt}\in I(S_{\bt}) \mbox{ and }a_{\uti_{\bt}}(S_{\bt})=1,\] or, 
		 \[\uti_{\bt}\notin I(S_{\bt}) \mbox{ and }I(S_{\bt-1})=I(S_{\bt}).\]
				 Then $I(S_{\bt})=I(S_{\bt+1})$.
\end{lemma}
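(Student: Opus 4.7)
The plan is to split into the two cases given in the hypothesis and verify separately that no agent uninfected at $S_{\bt}$ can enter the infected set in the transition to $S_{\bt+1}$.

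Case~(a), namely $\uti_{\bt}\in I(S_{\bt})$ with $a_{\uti_{\bt}}(S_{\bt})=1$, admits a one-line argument: Lemma~\ref{rem_0} gives $b_{\uti_{\bt}}(S_{\bt})=1=a_{\uti_{\bt}}(S_{\bt})$, and Observation~\ref{rem_0.25}(ii) then yields $S_{\bt}=\hat{S}_{\bt}=S_{\bt+1}$, so in particular $I(S_{\bt})=I(S_{\bt+1})$.

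For Case~(b), write $i=\uti_{\bt}$. First I would invoke Observation~\ref{rem_0.25}(iii), whose hypothesis $I(S_{\bt-1})=I(S_{\bt})$ is given, to deduce $\hat{S}_{\bt-1}=S_{\bt}$. The virus update rule from epoch $\bt-1$ to $\bt$ then tells me that for every $j\notin I(S_{\bt})$ one has $a_j(\hat{S}_{\bt-1})\,r_j(\hat{S}_{\bt-1})\leqslant \tau(j)$, and the identification $\hat{S}_{\bt-1}=S_{\bt}$ converts this into $a_j(S_{\bt})\,r_j(S_{\bt})\leqslant \tau(j)$ for every such $j$. Specializing at $j=i$ and invoking Lemma~\ref{rem_0} will show $b_i(S_{\bt})\geqslant a_i(S_{\bt})$; i.e., the update only weakly increases $i$'s action.

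It then remains to check that no agent uninfected at $S_{\bt}$ is infected in going from $\hat{S}_{\bt}$ to $S_{\bt+1}$. For $j=i$ this is Observation~\ref{obs_4}. For $j\neq i$ with $j\notin I(S_{\bt})$, note that $S_{\bt}$ and $\hat{S}_{\bt}$ differ only in the coordinate $a_i$, which has weakly increased; since $i\notin I(S_{\bt})=I(\hat{S}_{\bt})$ and $i\neq j$, $a_i$ enters the denominator of $r_j$ but not its numerator, so $r_j(\hat{S}_{\bt})\leqslant r_j(S_{\bt})$ (the degenerate case of a vanishing denominator at $S_{\bt}$ forces the numerator to vanish as well, giving $r_j\equiv 0$). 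Combining with $a_j(\hat{S}_{\bt})=a_j(S_{\bt})$ yields
\[
a_j(\hat{S}_{\bt})\,r_j(\hat{S}_{\bt})\leqslant a_j(S_{\bt})\,r_j(S_{\bt})\leqslant \tau(j),
\]
so $j$ is not added to the infected set. The hard part of the argument is the monotonicity bookkeeping in Case~(b); the key insight is that an uninfected agent's action appears only in the denominator of other agents' exposure ratios, so updating such an agent (who moreover moves upward) can only lower everyone else's viral exposure.
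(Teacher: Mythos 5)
Your proposal is correct and follows essentially the same route as the paper's proof: the same two-case split, the same use of Observation~\ref{rem_0.25} to identify $\hat{S}_{\bt-1}$ with $S_{\bt}$ and extract $a_j(S_{\bt})r_j(S_{\bt})\leqslant\tau(j)$ for uninfected $j$, the same deduction that the updating agent's action weakly increases, and the same monotonicity argument that this increase only lowers the other uninfected agents' exposures. The only cosmetic differences are that you cite Observation~\ref{obs_4} to dispose of the updating agent directly and note the vanishing-denominator degeneracy explicitly, which the paper spells out or glosses over respectively.
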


\begin{proof}
	  First assume that $I(S_{\bt-1})=I(S_{\bt})$ and $\uti_{\bar{t}}=i$ with $i\notin I(S_{\bt})$. Since $i\notin I(S_{\bt})$, $i$ will choose her action as
	  \[
	   b_i(S_{\bt})= 
	   \begin{cases}
	    1 & \text{if }  r_i(S_{\bt})=0,\\
	    	\min \left\{1,\frac{\tau(i)}{r_i(S_{\bt})}\right\} & \text{if }  r_i(S_{\bt}) \neq 0.
	   \end{cases}
	  \]
	  
	  	
	 
	   If $r_i(S_{\bt})=0$ then $r_i(\hat{S}_{\bt})=r_i(S_{\bt})=0$, and agent $i$ will not get infected as $1\times r_i(\hat{S}_{\bt})=0\leqslant \tau(i)$. Suppose $r_i(S_{\bt})>0$. Since $a_i(\hat{S}_{{\bar{t}}})=b_i(S_{{\bar{t}}})$ and $r_i(S_{\bar{t}})=r_i(\hat{S}_{\bar{t}})$, this means agent $i$ will not get infected at $\bar{t}+1$. To show that  any other agent $j\notin I(S_{\bt})$ will not get infected at $\bar{t}+1$, we first claim that $a_i(\hat{S}_{{\bar{t}}})\geqslant a_i(S_{{\bar{t}}})$. If $a_i(\hat{S}_{{\bar{t}}})=1$ then there is nothing to show, so, assume $a_i(\hat{S}_{{\bar{t}}})=\frac{\tau(i)}{r_{i}(S_{\bar{t}})}$.  As $i\notin I(S_{\bar{t}})$, we have $a_i(\hat{S}_{{\bar{t}-1}})r_i(\hat{S}_{\bar{t}-1})\leqslant \tau(i)$.  Moreover, as $I(S_{\bar{t}-1})=I(S_{\bar{t}})$, it follows that $\hat{S}_{\bar{t}-1}=S_{\bar{t}}$ (see (iii) of Remark \ref{rem_0.25}) and hence, $r_i(\hat{S}_{\bar{t}-1})=r_i(S_{\bar{t}})$. Combining it with $a_i(S_{{\bar{t}}})=a_i(\hat{S}_{{\bar{t}-1}})$, we get $a_i(S_{{\bar{t}}})r_i(S_{\bar{t}})\leqslant \tau(i)$. So, $a_i(\hat{S}_{{\bar{t}}})\geqslant a_i(S_{{\bar{t}}})$.
	  
	  Take $j\in I(S_{\bt})$ with $j\neq i$. Since $j\notin I(S_{\bar{t}})$, it means $a_j(\hat{S}_{\bar{t}-1})r_j(\hat{S}_{\bar{t}-1})\leqslant \tau(j)$. Additionally, $j\neq i$ implies $a_j(\hat{S}_{\bar{t}-1})=a_j(S_{\bar{t}})=a_j(\hat{S}_{\bar{t}})$. Therefore, to show that $a_j(\hat{S}_{\bar{t}})r_j(\hat{S}_{\bar{t}})\leqslant \tau(j)$, it is enough to show $r_j(\hat{S}_{\bar{t}-1})\geqslant r_j(\hat{S}_{\bar{t}})$. Note that
	  \begin{align*}
	  	r_j(\hat{S}_{\bar{t}-1})&=\frac{\sum_{k\in I(\hat{S}_{\bar{t}-1})\setminus j}a_k(\hat{S}_{\bar{t}-1})g_{jk}}{\sum_{k\in N\setminus j}a_k(\hat{S}_{\bar{t}-1})g_{jk}} \\
	  	&=\frac{\sum_{k\in I(\hat{S}_{\bar{t}-1})\setminus j}a_k(\hat{S}_{\bar{t}})g_{jk}}{a_i(\hat{S}_{\bar{t}-1})g_{ji}+\sum_{k\in N\setminus \{i,j\}}a_k(\hat{S}_{\bar{t}})g_{jk}}  \text{ (as }i\notin I(\hat{S}_{\bar{t}-1}) \text{ and } a_k(\hat{S}_{\bar{t}-1})=a_k(\hat{S}_{\bar{t}}) \forall k\neq i)\\
	  	&=\frac{\sum_{k\in I(\hat{S}_{\bar{t}})\setminus j}a_k(\hat{S}_{\bar{t}})g_{jk}}{a_i(\hat{S}_{\bar{t}-1})g_{ji}+\sum_{k\in N\setminus \{i,j\}}a_k(\hat{S}_{\bar{t}})g_{jk}}  \text{ (as } I(\hat{S}_{\bar{t}-1})=I(S_{\bt-1})=I(S_{\bt})=I(\hat{S}_{\bar{t}}))\\
	  	&\geqslant \frac{\sum_{k\in I(\hat{S}_{\bar{t}})\setminus j}a_k(\hat{S}_{\bar{t}})g_{jk}}{a_i(\hat{S}_{\bar{t}})g_{ji}+\sum_{k\in N\setminus \{i,j\}}a_k(\hat{S}_{\bar{t}})g_{jk}}  \text{ (as } a_i(\hat{S}_{\bar{t}})\geqslant  a_i(S_{{\bar{t}}})=a_i(\hat{S}_{\bar{t}-1}))\\
	  	&=r_j(\hat{S}_{\bar{t}}).
	  \end{align*}
	  So, agent $j$ will not get infected at $\bar{t}+1$ and hence, $I(S_{\bar{t}+1})=I(S_{\bar{t}})$. 
	  
	  Now assume $i\in I(S_{\bt})$ with $a_{i}(S_{\bt})=1$. This means $a_i(\hat{S}_{\bt})=b_i(S_{\bt})=1$. As $b_i(S_{\bt})=a_{i}(S_{\bt})$ and $\uti_{\bt}=i$, we have $S_{\bt}=S_{\bt+1}$ (see Observation \ref{rem_0.25}). Hence, $I(S_{\bt})=I(S_{\bt+1})$. This completes the proof of the lemma.
	\end{proof}

\begin{lemma}\label{lem_3}
	Suppose that  $I(S_0)=\{1\}$ and $a_i(S_0)\leqslant \tau(i)$ for all $i\in N$. Let $\uti\in N_\infty$ and $\hat{t}\in \mathbb{N}_0$ be such that $\uti_t\neq 1$ for all $t<\hat{t}$. Then, $I(S_t)=\{1\}$ for all $t\leqslant \hat{t}$.
\end{lemma}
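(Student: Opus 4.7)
The plan is to prove this by strong induction on $t \in \{0, 1, \ldots, \hat{t}\}$, but with a strengthened inductive hypothesis. For each such $t$ I will simultaneously establish: (i) $I(S_t) = \{1\}$, and (ii) $a_k(S_t) \, r_k(S_t) \leqslant \tau(k)$ for every $k \in N \setminus \{1\}$. Both hold at $t = 0$: (i) is given; and for (ii), note that the numerator in the definition of $r_k(S_0)$ is a partial sum of its denominator, so $r_k(S_0) \leqslant 1$, which combined with $a_k(S_0) \leqslant \tau(k)$ yields $a_k(S_0) r_k(S_0) \leqslant \tau(k)$.

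For the inductive step, suppose (i) and (ii) hold at some $t < \hat{t}$, and let $j = \uti_t$, which differs from $1$ by hypothesis. By (i), $j \notin I(S_t)$, so she plays the best response $b_j(S_t)$. The key sub-claim I would establish first is the monotonicity $b_j(S_t) \geqslant a_j(S_t)$: if $r_j(S_t) = 0$ this is immediate from Lemma~\ref{rem_0}; otherwise, Lemma~\ref{rem_0} gives $b_j(S_t) = \min\{1, \tau(j)/r_j(S_t)\}$, while (ii) yields $a_j(S_t) \leqslant \tau(j)/r_j(S_t)$, and $a_j(S_t) \leqslant 1$ trivially, so $a_j(S_t) \leqslant b_j(S_t)$. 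Now to check that no new infection occurs at epoch $t$: for $k = j$, best response gives $a_j(\hat{S}_t) r_j(\hat{S}_t) = b_j(S_t) r_j(S_t) \leqslant \tau(j)$, since $r_j$ is independent of $a_j$'s own value; for $k \in N \setminus \{1, j\}$, only $a_j$ changes between $S_t$ and $\hat{S}_t$, so the numerator of $r_k$ (which equals $g_{k,1} a_1(S_0)$, because agent~$1$ has never been chosen by Observation~\ref{rem_0.5} and $I(\hat{S}_t) = I(S_t) = \{1\}$ by Observation~\ref{rem_0.25}) is unchanged, while the denominator weakly increases by the sub-claim, hence $r_k(\hat{S}_t) \leqslant r_k(S_t)$; combining with $a_k(\hat{S}_t) = a_k(S_t)$ and (ii) gives $a_k(\hat{S}_t) r_k(\hat{S}_t) \leqslant \tau(k)$. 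Thus no agent $k \ne 1$ is infected, establishing (i) at $t+1$; and since $a_N(S_{t+1}) = a_N(\hat{S}_t)$ by Observation~\ref{rem_0.25}, the very same inequalities produce (ii) at $t+1$.

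The main obstacle, I believe, is pinning down the correct strengthening of the inductive hypothesis. A naive induction on (i) alone stumbles because a previously-acting uninfected agent~$k$ may well have chosen an action exceeding $\tau(k)$ (whenever her viral exposure $r_k$ was strictly less than $1$ at the time of update), so any attempt to maintain the simpler invariant $a_k \leqslant \tau(k)$ would break down. The joint bound in (ii) captures exactly the invariant that survives along the process: it is precisely the statement that $k$ escaped infection at the previous epoch, it propagates forward via the monotonicity $b_j(S_t) \geqslant a_j(S_t)$, which itself is derived from (ii), and it is immediate at $t = 0$ from the initial assumption $a_i(S_0) \leqslant \tau(i)$.
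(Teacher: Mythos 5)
Your proof is correct and follows essentially the same route as the paper: the paper proves the lemma by induction on $t$, delegating the inductive step to its Lemma~\ref{lem_2}, whose internal argument is exactly your monotonicity sub-claim $b_j(S_t)\geqslant a_j(S_t)$ (derived from the fact that $j$ escaped infection, i.e.\ your invariant (ii)) followed by the observation that the other agents' exposures $r_k$ can only decrease. The only difference is organizational — you carry the invariant $a_k(S_t)r_k(S_t)\leqslant\tau(k)$ explicitly in a strengthened induction hypothesis, whereas the paper recovers the same inequality at each step from ``the infected set did not change at the previous epoch'' via Lemma~\ref{lem_2} — so the mathematical content coincides.
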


\begin{proof}
	Note that if $\hat{t}=0$ then there is nothing to show. So, assume $\hat{t}\geqslant 1$. We use induction to prove this. As the base case, we show that $I(S_1)=\{1\}$. Let $\uti_0=i$. Since $\hat{t}\geqslant 1$, $i\neq 1$. Agent $i$ will choose her action as
	  \[
	   b_i(S_0)= 
	   \begin{cases}
	    1 & \text{if }  r_i(S_0)=0,\\
	    	\min \left\{1,\frac{\tau(i)}{r_i(S_{0})}\right\} & \text{if }  r_i(S_0) \neq 0.
	   \end{cases}
	  \]	
		
	 If $r_i(S_0)=0$ then $r_i(\hat{S}_0)=r_i(S_0)=0$, and agent $i$ will not get infected as $1\times r_i(\hat{S}_0)=0\leqslant \tau(i)$. Suppose $r_i(S_0)>0$. Since $a_i(\hat{S}_0)=b_i(S_0)$, $r_i(\hat{S}_0)=r_i(S_0)$, and $b_i(S_0)\leqslant \frac{\tau(i)}{r_i(S_0)}$, this means agent $i$ will not get infected at $t=1$. For any $j\notin \{1,i\}$, $a_j(\hat{S}_0)=a_j(S_0)\leqslant \tau(j)$, so, agent $j$ will also not get infected at $t=1$. Thus, $I(S_1)=\{1\}$. Next we introduce an induction hypothesis.
	
	\noindent\textit{Induction Hypothesis:} Given $\bar{t}\in \mathbb{N}_0$ with $\hat{t}\geqslant \bar{t}>1$, we have $I(S_1)=\cdots=I(S_{\bar{t}-1})=\{1\}$.
	
	  We show that $I(S_{\bar{t}})=\{1\}$. Let $\uti_{\bar{t}-1}=i$. Since $\hat{t}\geqslant \bar{t}$, this means $i\neq 1$. Hence, $i\notin I(S_{\bt-1})$. As $\bt>1$, we have $I(S_{\bar{t}-2})=I(S_{\bt-1})$. This together with Lemma \ref{lem_2}, implies $I(S_{\bt})=I(S_{\bt-1})=\{1\}$. Thus, by induction, we have $I(S_{\hat{t}})=\{1\}$. This completes the proof of the lemma.  \end{proof}

     \begin{remark}\label{rem_1}
  	It follows from Lemma \ref{lem_3} that  $I(S_{t_1(\uti)})=\{1\}$ for all $\uti \in N_\infty$. 
  \end{remark}

\begin{lemma}\label{lem_4}
	 Consider $\uti\in N_\infty$ and let $\h \in \mathbb{N}_0$ be such that 
	$a_i(S_{\h})=1 \text{ for all } i\in I(S_{\h}) \text{ and } a_i(S_{\h})\leqslant \tau(i) \text{ for all } i\notin I(S_{\h})$. Then, $I(S_{\h})=I(S_\infty).$
\end{lemma}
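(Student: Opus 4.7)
My plan is to prove by induction on $t \geq \hat{t}$ that the following three invariants are maintained, from which the conclusion $I(S_\infty) = I(S_{\hat{t}})$ follows immediately from the first (since $I(S_{t})$ is stationary for all $t\geq \hat{t}$):
\begin{enumerate}[(I1)]
\item $I(S_t) = I(S_{\hat{t}})$;
\item $a_i(S_t) = 1$ for every $i \in I(S_{\hat{t}})$;
\item $a_i(S_t)\, r_i(S_t) \leq \tau(i)$ for every $i \notin I(S_{\hat{t}})$.
\end{enumerate}
At the base case $t = \hat{t}$, (I1) and (I2) are precisely the hypotheses, while (I3) follows from $a_i(S_{\hat{t}}) \leq \tau(i)$ together with the trivial bound $r_i \leq 1$ (obvious from the definition of $r_i$).

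For the inductive step, let $\uti_t = i$. If $i \in I(S_t) = I(S_{\hat{t}})$, then (I2) and Lemma~\ref{rem_0} give $a_i(S_t) = b_i(S_t) = 1$, so Observation~\ref{rem_0.25}(ii) yields $S_t = \hat{S}_t = S_{t+1}$, and all three invariants are preserved trivially. Suppose instead that $i \notin I(S_t)$. I first claim $b_i(S_t) \geq a_i(S_t)$: when $r_i(S_t) > 0$, (I3) gives $a_i(S_t) \leq \tau(i)/r_i(S_t)$, hence $a_i(S_t) \leq \min\{1, \tau(i)/r_i(S_t)\} = b_i(S_t)$; when $r_i(S_t) = 0$ the claim is immediate. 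Since only agent $i$'s action changes from $S_t$ to $\hat{S}_t$, we have $r_i(\hat{S}_t) = r_i(S_t)$, so $a_i(\hat{S}_t)\, r_i(\hat{S}_t) = b_i(S_t)\, r_i(S_t) \leq \tau(i)$, meaning $i$ is not newly infected. For any other uninfected agent $j \neq i$, observe that because $i \notin I(S_t)$, agent $i$ does not appear in the numerator of $r_j$, so the numerator is unchanged between $S_t$ and $\hat{S}_t$, whereas the denominator weakly increases by $g_{ji}(a_i(\hat{S}_t) - a_i(S_t)) \geq 0$. Hence $r_j(\hat{S}_t) \leq r_j(S_t)$, and combined with $a_j(\hat{S}_t) = a_j(S_t)$ this gives $a_j(\hat{S}_t)\, r_j(\hat{S}_t) \leq a_j(S_t)\, r_j(S_t) \leq \tau(j)$ by (I3). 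Thus no agent becomes newly infected, establishing (I1) at $t+1$; (I2) persists since no infected agent's action was altered in this case; and (I3) at $t+1$ follows from the same monotonicity estimates applied at $S_{t+1}$, whose action profile coincides with that of $\hat{S}_t$.

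The main obstacle is identifying the right inductive invariant and then executing the monotonicity step. The critical observation is that when an uninfected agent $i$ updates, $a_i$ can only weakly \emph{increase}, which in turn weakly \emph{decreases} $r_j$ for every other uninfected $j$. This monotonicity crucially uses $i \notin I(S_t)$: it is precisely this that removes $a_i$ from the numerator of $r_j$ while leaving it in the denominator. Without this asymmetry, (I3) could fail and a cascade of indirect infections could in principle occur; once this piece is in place, the remainder is straightforward bookkeeping.
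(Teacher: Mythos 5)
Your proof is correct and rests on the same two pillars as the paper's argument: the one-step non-infection bound $a_j(\hat S_t)r_j(\hat S_t)\leqslant \tau(j)$, and the monotonicity observation that an uninfected updater only raises her action, which weakly lowers every other uninfected agent's exposure — this is exactly the content of the paper's Lemma~\ref{lem_2}, which you re-prove inline. The only difference is organizational: you run a direct induction carrying the invariant (I3) explicitly, whereas the paper proves the first step separately and then argues by contradiction, recovering (I3) from the stationarity of the infected set at the previous epoch.
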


\begin{proof}
	We first show that $I(S_{\h+1})=I(S_{\h})$. Let $\uti_{\h}=i$. Suppose $i\in I(S_{\h})$.  Thus by Lemma  \ref{rem_0}. $a_i(\hat{S}_{\h})=b_i(S_{\h})=1$. This implies $S_{\h}=S_{\h+1}$  (see Observation \ref{rem_0.25}) and hence, $I(S_{\h+1})=I(S_{\h})$. Now suppose  $i\notin I(S_{\h})$. Agent $i$ will choose her action as

	  \[
	   b_i(S_{\h})= 
	   \begin{cases}
	    1 & \text{if }  r_i(S_{\h})=0,\\
	    	\min \left\{1,\frac{\tau(i)}{r_i(S_{\h})}\right\} & \text{if }  r_i(S{\h}) \neq 0.
	   \end{cases}
	  \]

		

	If $r_i(S_{\h})=0$ then $r_i(\hat{S}_{\h})=r_i(S_{\h})=0$, and agent $i$ will not get infected as $a_i(\hat{S}_{\h})\times r_i(\hat{S}_{\h})=0\leqslant \tau(i)$. Suppose $r_i(S_{\h})>0$. Since $a_i(\hat{S}_{\h})=b_i(S_{\h})$ and $r_i(S_{\h})=r_i(\hat{S}_{\h})$, this means agent $i$ will not get infected at $\h+1$. Take $j\notin I(S_{\h})$ and $j\neq i$. Note that by the assumption of the lemma, $a_j(S_{\h})\leqslant \tau(j)$. Since $j\neq i$, we have $a_j(S_{\h})=a_j(\hat{S}_{\h})$. Combining these two, we have $a_j(\hat{S}_{\h})\leqslant \tau(j)$. As $r_j(\hat{S}_{\h})\leqslant 1$ this implies $a_j(\hat{S}_{\h})r_j(\hat{S}_{\h})\leqslant \tau(j)$. Thus, agent $j$ will not get infected at $\h+1$. Hence, $I(S_{\h+1})=I(S_{\h})$

	We now show that for any $t\in \mathbb{N}_0$ with $t>\h+1$, $I(S_{\h})=I(S_t)$ holds. Assume for contradiction there exists $\bt\in \mathbb{N}_0$ with $\bt>\h+1$ such that $I(S_{\h})\subsetneq I(S_{\bt})$. Without loss of generality we can assume that $I(S_{\h})=I(S_{\h+1})=\cdots=I(S_{\bt-1})$. Let $\uti_{\bt-1}=i$. Suppose $i\in I(S_{\bt-1})$.   We first show 
	\begin{equation}\label{eq_1}
		a_i(S_{\bt-1})=1.
	\end{equation}
	As $i\in I(S_{\bt-1})$ and $I(S_{\bt-1})=I(S_{\h})$, we have $i\in I(S_{\h})$. Thus, by the assumption of the lemma, $a_i(S_{\h})=1$. Since $\h\leqslant \bt-1$, this implies $a_i(S_{\bt-1})=1$; see Observation \ref{obs_3}.

	
	By (\ref{eq_1}), we have $a_i(S_{\bt-1})=1$. Since $\uti_{\bt-1}=i$ and $i\in I(S_{\bt-1})$, this implies $a_i(\hat{S}_{\bt-1})=1$. Thus, $S_{\bt-1}=\hat{S}_{\bt-1}$, and hence, $I(S_{\bt-1})=I(S_{\bt})$ (see Observation \ref{rem_0.25}), a contradiction to $I(S_{\h})\subsetneq I(S_{\bt})$. Hence, $I(S_{\bt})=I(S_{\h})$.
	
	Now suppose $i\notin I(S_{\bt-1})$. As $\bt>\h+1$, we have $I(S_{\bt-1})=I(S_{\bt-2})$. This together with Lemma \ref{lem_2} implies $I(S_{\bt-1})=I(S_{\bt})$, a contradiction to $I(S_{\h})\subsetneq I(S_{\bt})$. Hence, $I(S_{\bt})=I(S_{\h})$. This completes the proof of the lemma.
\end{proof}

\section{Proof of Theorem \ref{theo_1}, Theorem \ref{theo_1.5}, Theorem \ref{theo_2}, and Theorem \ref{theo_3}}  \label{sec:appendix_C}

\subsection{Proof of Theorem \ref{theo_1}}\label{subsec:proof_theo_1}
\begin{proof} 
	We complete the proof in two steps. In Step 1, we explore how the infection spreads when agents update their actions according to a fixed agent sequence, and in Step 2 we use this to explore how infection spreads when agents update their actions randomly.
	
	\noindent \textbf{Step 1.}    Fix an agent sequence  $\uti\in N_\infty$ and let $S$ be the DVSP induced by $\undertilde{v}$. To shorten notation, for all $i \in N$, let us denote $t_i(\uti)$ by $k_i$.  The following  claim demonstrates how an agent $i$ with $k_i<k_1$ will update her action. 
	
	\vspace{2mm}
	\noindent\textbf{Claim 1}: Suppose $k_i<k_1$ for some $i\in N$. Then, $a_i(S_t)=1$ for all $t=k_i+1,\ldots, k_1$.
	
	\noindent \textbf{Proof of the claim.}	By Lemma \ref{lem_3}, $I(S_t)=\{1\}$ for all $t\leqslant k_1$. Since $k_1 < \infty$ and $k_i<k_1$, we have  $k_i<\infty$. Consider any time point $l$ such that $k_i\leqslant l< k_1$. By the definition  of the process, we need to show that the Claim holds for $l$ such that $\uti_{l}=i$ (see Observation \ref{rem_0.5}). Since $l< k_1$, we have $a_1(S_{l})=a_1(S_0)=0$. This together with $I(S_{l})=\{1\}$ implies   $r_i(S_{l})=0$. Hence, by Remark \ref{rem_0.25}, agent $i$ will update her action to $1$ at $\hat{S}_{l}$. Since $a_i(\hat{S}_l)=a_i(S_{l+1})$, this means, $a_i(S_{l+1})=1$. This completes the proof of the Claim. \hfill$\square$

	\noindent\textbf{Case 1:} $|N_1(\uti)|\geqslant \alpha$. \\
	As $|N_1(\uti)|\leqslant n-1$, the assumption of the case implies $\alpha=\left \lceil\frac{1}{\tau}\right \rceil$. Hence, $\alpha\tau\geqslant 1$. By Claim 1,  $a_i(S_{k_1})=1$ for all  $i\in N_1(\uti)$. Also, by the definition of the process, $a_i(S_{k_1})=0$ for all  $i\notin N_1(\uti)\cup \{1\}$ as they have not updated their actions till the time point $k_1$. Recall that $\hat{S}_{k_1}$ denotes the intermediate state where the only change from $S_{k_1}$ is that agent $\uti_{k_1}$ has updated her action to $b_{\uti_{k_1}}(S_{k_1})$. Since $\uti_{k_1}=1$, we have  $a_i(S_{k_1})=a_i(\hat{S}_{k_1})$ for all $i\neq 1$. Thus, $a_i(\hat{S}_{k_1})=1$ for all $i\in N_1(\uti)$  and $a_i(\hat{S}_{k_1})=0$ for all $i\notin N_1(\uti)\cup \{1\}$.
	
	By Remark \ref{rem_0} and the definition of the process,  $a_1(\hat{S}_{k_1})=1$. Consider the time point $k_1+1$. By the definition of the process, an agent $i\neq 1$ will be in $I(S_{k_1+1})$ if $a_i(\hat{S}_{k_1})r_i(\hat{S}_{k_1})>\tau$. Since $I(S_{k_1})=\{1\}$, $a_i(\hat{S}_{k_1})=1$ for all $i\in N_1(\uti) \cup \{1\}$, and $a_i(\hat{S}_{k_1})=0$ for all $i\notin N_1(\uti)\cup \{1\}$, it follows that $r_i(\hat{S}_{k_1})\leqslant \frac{1}{\alpha}$ for all $i\in N_1(\uti)$. Because $\alpha\tau\geqslant 1$, this implies that no agent in $N_1(\uti)$  gets infected at the time point $k_1+1$. Moreover, since  $a_i(\hat{S}_{k_1})=0$ for each agent $i\notin N_1(\uti)\cup \{1\}$, we have  $a_i(\hat{S}_{k_1})r_i(\hat{S}_{k_1})=0\leqslant \tau$.  Thus, no new agent gets infected at the time point $k_1+1$, and hence, $I(S_{k_1+1})=\{1\}$.
	
	We show that no new agent would get infected after this. We first show that $I(S_{k_1+2})=\{1\}$. Let $\uti_{k_1+1}=i$. If $i\notin I(S_{k_1+1})$ then as $I(S_{k_1})=I(S_{k_1+1})$ by Lemma \ref{lem_2}, we have $I(S_{k_1+1})=I(S_{k_1+2})$. If $i\in I(S_{k_1+1})$ then $i=1$. Moreover, $a_1(S_{k_1+1})=a_1(\hat{S}_{k_1})=1$. Hence, by Lemma \ref{lem_2}, $I(S_{k_1+1})=I(S_{k_1+2})$. Therefore, $I(S_{k_1+2})=\{1\}$. Using the same arguments repeatedly, it follows that $I(S_t)=\{1\}$ for all $t\geqslant k_1+2$. Thus,  $I(S_\infty)=\{1\}$.

	\noindent\textbf{Case 2:} $|N_1(\uti)|\leqslant \alpha-1$. \\
	Using similar arguments as in Case 1, we have $a_i(\hat{S}_{k_1})=1$ for all $i\in N_1(\uti)$ and $a_i(\hat{S}_{k_1})=0$ for all $i\notin N_1(\uti)\cup \{1\}$. This implies  $r_i(\hat{S}_{k_1})\geqslant\frac{1}{\alpha-1}$ for all $i\in N_1(\uti)$. As $\alpha=\min\left\{\left \lceil\frac{1}{\tau}\right \rceil,n\right\}$, we have $(\alpha-1)\tau<1$. Hence, all agents in $N_1(\uti)$ will get infected at time point $k_1+1$. Moreover, as $a_i(\hat{S}_{k_1})=0$ for all $i\notin N_1(\uti)\cup \{1\}$, the agents outside $N_1(\uti) \cup \{1\}$ will not get infected at time point $k_1+1$. Thus, we have  $I(S_{k_1+1})=N_1(\uti)\cup \{1\}$. Because,  $a_i(S_{k_1+1})=a_i(\hat{S}_{k_1})=1$  for all $i\in I(S_{k_1+1})$ and $a_i(S_{k_1+1})=0\leqslant \tau$ for all $i\notin I(S_{k_1+1})$,  by Lemma \ref{lem_4} it follows that  $I(S_{k_1+1})=I(S_\infty)$. Hence, $I(S_\infty)=N_1(\uti)\cup \{1\}$.

	\noindent \textbf{Step 2.} Consider the probability space $(N_\infty, \mathcal{F}, \mathbb{P})$ and random variables $S$ and $t_1,\ldots, t_n$. Let $m \in \{2,\ldots, n\}$ be such that $m \leqslant \alpha$.  In view Case 1 and Case 2 of the current proof, we have  (i) $|I(S_\infty)|\leqslant \alpha$, and (ii) $|I(S_\infty)| = m$ with $1\in I(S_\infty)$ if and only if  $| \{i \in N \mid t_i < t_1\}| = m-1$. Also, $I(S_\infty) = \{1\}$ if and only if $ \{i \in N \mid t_i < t_1\}=\emptyset$. Moreover, as $\mathbb{P}$ is uniform, any two subsets of $N$ with same cardinality have the same probability.
	These observations together yield  
	
\[
	\mathbb{P}(I(S_\infty)=J)= 
	\begin{cases} 
		1-\frac{\alpha-1}{n} & \text{if } J=\{1\} , \\\\
		\frac{1}{n \times \Mycomb[n-1]{m-1}} & \text{if } 1\in J \text{ and } |J|=m \text{ where } m\in[2,\alpha], 
		\\\\ 
		0 & \text{ otherwise }.
	\end{cases}
	\]
	
		This completes the proof of the theorem. 
\end{proof}

\subsection{Proof of Theorem \ref{theo_1.5}}

\begin{proof} 
	We follow the same structure that we used in the proof of Theorem \ref{theo_1}. 
	
	\noindent \textbf{Step 1.}    Fix an agent sequence  $\uti\in N_\infty$ and let $S$ be the DVSP induced by $\undertilde{v}$. To shorten notation, for all $i \in N$, let us denote $t_i(\uti)$ by $k_i$.  The following  claim demonstrates how an agent $i$ with $k_i<k_1$ will update her action. Recall the set $N_1(\uti)$. We distinguish two cases based on the value of $|N_1(\uti)|$. \\

	

	\noindent\textbf{Case 1:} $|N_1(\uti)|=0$. \\
First assume $\tau\geqslant \frac{1}{n-1}$. We show that no agent will get infected under this assumption, i.e., $I(S_\infty)=\{1\}$. Note that by the assumption of the case, $\uti_0=1$. Also, as $a=1$,  $a_i(S_{0})=1$ for all  $i\in N$.  Recall that $\hat{S}_{0}$ denotes the intermediate state where the only change from $S_{0}$ is that agent $\uti_{0}$ has updated her action to $b_{\uti_{0}}(S_{0})$. Since $\uti_{0}=1$, we have  $a_i(S_{0})=a_i(\hat{S}_{0})$ for all $i\neq 1$. Thus, $a_i(\hat{S}_{0})=1$ for all $i\in N\setminus \{1\}$. Moreover, by Remark \ref{rem_0} and the definition of the process,  $a_1(\hat{S}_{0})=1$. Consider the time point $1$. By the definition of the process, an agent $i\neq 1$ will be in $I(S_{1})$ if $a_i(\hat{S}_{0})r_i(\hat{S}_{0})>\tau$. Since $I(S_{0})=\{1\}$, $a_i(\hat{S}_{0})=1$ for all $i\in N$, it follows that $r_i(\hat{S}_{0})= \frac{1}{n-1}$ for all $i\in N\setminus \{1\}$. Because $\tau\geqslant \frac{1}{n-1}$, this implies that no agent in $N\setminus \{1\}$  gets infected at the time point $1$. Hence, $I(S_{1})=\{1\}$.
	
	We now show that no new agent would get infected after this. We first show that $I(S_{2})=\{1\}$. Let $\uti_{1}=i$. If $i\notin I(S_{1})$ then as $I(S_{0})=I(S_{1})$ by Lemma \ref{lem_2}, we have $I(S_{1})=I(S_{2})$. If $i\in I(S_{1})$ then $i=1$. Moreover, $a_1(S_{1})=a_1(\hat{S}_{0})=1$. Hence, by Lemma \ref{lem_2}, $I(S_{1})=I(S_{2})$. Therefore, $I(S_{2})=\{1\}$. Using the same arguments repeatedly, it follows that $I(S_t)=\{1\}$ for all $t\geqslant 2$. Thus,  $I(S_\infty)=\{1\}$.

 Now assume $\tau< \frac{1}{n-1}$. We show that all the agent gets infected under this assumption. Using similar arguments as before, we get $r_i(\hat{S}_{0})= \frac{1}{n-1}$ for all $i\in N\setminus \{1\}$. As $\tau< \frac{1}{n-1}$, this means each $i\in N\setminus \{1\}$ will get infected  at time point $1$. Therefore,  $I(S_\infty)=N$. \\

	\noindent\textbf{Case 2:} $|N_1(\uti)|\geqslant  1$.\\ 
This means $\uti_0\neq 1$. Let $\uti_0=i\notin \{1\}$. Hence, by the definition of the process, agent $i$ will choose her action as $b_i(S_0)$ at the intermediate state $\hat{S}_0$. As $a_j(S_0)=1$ for all $j\in N$ and $I(S_0)=\{1\}$, it follows that $r_i(S_0)\neq 0$. Therefore,
 \begin{equation}\label{e_1}
 b_i(S_0)=\min \left\{1,\frac{\tau}{r_i(S_0)}\right \}=\min \left \{1,(n-1)\tau\right \}.    
 \end{equation}

\noindent  Since by our assumption $\uti_{0}=i$ and $i\notin I(S_{0})$, by Observation \ref{obs_4}, $i\notin I(S_{1})$. For any other uninfected agent $j$, $$r_j(\hat{S}_0)=\frac{1}{(n-2)+b_i(S_0)}.$$ This together with the fact that $a_j(\hat{S}_0)=1$ implies 
 \begin{enumerate}
     \item if $\tau\geqslant \frac{1}{n-1}$ then $b_i(S_0)=1$ and hence, $a_j(\hat{S}_0)r_j(\hat{S}_0)=\frac{1}{n-1}\leqslant \tau$, and
     \item if $\tau< \frac{1}{n-1}$ then $b_i(S_0)<1$ and hence, $a_j(\hat{S}_0)r_j(\hat{S}_0)>\frac{1}{n-1}>\tau$.
 \end{enumerate}
 Combining the above observations, we may write if $\tau\geqslant \frac{1}{n-1}$ then agent $j$ will not get infected at time point $1$ and if $\tau< \frac{1}{n-1}$ then agent $j$ will get infected at time point $1$.  Hence, we have
 $$\tau\geqslant \frac{1}{n-1} \implies I(S_1)=\{1\} \mbox{ and } \tau< \frac{1}{n-1} \implies I(S_1)=N\setminus \{i\}.$$

 If $\tau\geqslant \frac{1}{n-1}$ then using similar arguments as in Case 1, we can show that $I(S_\infty)=\{1\}$. If $\tau< \frac{1}{n-1}$ then to decide the final infected set we distinguish two subcases. \\

 \noindent \textbf{Case 2.1.}  $\uti_1=i$.\\
 We show that the final infected set will be $N\setminus i$. Since by our assumption $\uti_{1}=i$ and $i\notin I(S_{1})$, by Observation \ref{obs_4}, $i\notin I(S_{2})$. Hence, $I(S_{2})=N\setminus \{i\}$. We now show that $i$ will not get infected after this. At time point $2$,
	$$r_i(\hat{S}_{2})=\frac{(n-1)}{(n-1)}=1.$$ Therefore, $a_i(\hat{S}_{2})=\tau$ (see Observation \ref{obs_4}). At time point $3$, if $\uti_{3}=i$, then agent $i$ would not get infected at time point $4$ (Observation \ref{obs_4}). On the other hand, if $\uti_{3}\neq i$ then as $a_i(\hat{S}_{3})=a_i(\hat{S}_{2})=\tau$, it follows that $a_i(\hat{S}_{3})r_i(\hat{S}_3)\leqslant \tau$. Hence, agent $i$ would remain uninfected at time point $4$. Continuing in this manner, we may show that $i$ will  not get infected after this. Thus, $I(S_\infty)=N\setminus \{i\}$.

	\vspace{2mm}
	\noindent\textbf{Case 2.2.:} $\uti_{1}\neq i$ \\
	We show that the final infected set will be $N$. Since  $I(S_{1})=N\setminus \{i\}$, $r_i(\hat{S}_{1})=1$. Moreover, as $a_i(S_{1})=a_i(\hat{S}_{0})=b_i(S_0)=(n-1)\tau>\tau$ (see \ref{e_1}) and $\uti_{1}\neq i$, it follows that $a_i(\hat{S}_{1})>\tau$. Combining this two we have $a_i(\hat{S}_{1})r_i(\hat{S}_{1})>\tau$. Thus, agent $i$ will get infected at time point $2$. Hence, $I(S_{2})=N$ and  $I(S_\infty)=N$. \\
	
	\noindent \textbf{Step 2.} First assume $\tau\geqslant \frac{1}{n-1}$. Therefore, in view Case 1 and Case 2 of the current proof, we have $I(S_\infty)=\{1\}$.

 Now assume $\tau<\frac{1}{n-1}$. By Case 1 and Case 2 above, we have 
	\begin{enumerate}[(i)]
		\item $|I(S_\infty)|=n-1$ with $1\in I(S_\infty)$  if $|N_1(\uti)|\geqslant 1$ and there is $i\in N\setminus \{1\}$ such that $k_i=0$ and $\uti_{1}=i$, and
		\item $I(S_\infty)=N$  if either $|N_1(\uti)|=0$ or $|N_1(\uti)|\geqslant 1$ and there is no $i\in N\setminus \{1\}$ such that $k_i=0$ and $\uti_{1}=i$. 
	\end{enumerate}

 	We calculate the probability of $|I(S_\infty)|=n-1$. By (i) we have
	\begin{align*}
		&P(\uti\mid |N_1(\uti)|\geqslant 1 \text{ and } \exists i\neq 1 \text{ such that } k_i=0 \text{ and } \uti_{1}=i)\\
          =&P(\uti\mid  \exists i\neq 1 \text{ such that } k_i=0 \text{ and } \uti_{1}=i)\\
		=&\Mycomb[n-1]{1}\times \frac{1}{n^2}\\
           =&\frac{n-1}{n^2}.
	\end{align*}
Note that by (i) and (ii), 
$$P(|I(S_\infty)|=n-1)+P(I(S_\infty)=N)=1.$$ Therefore,
	\begin{align*}
		P(I(S_\infty)=N)&=1-P(|I(S_\infty)|=n-1)\\
		&=1-\frac{n-1}{n^2}. 
	\end{align*}
Since any two subsets of $N$ with the cardinality $n-1$ have the same probability,	combining all the above observations, we have the following distribution of the infected set. 

\[
 \mathbb{P}(I(S_\infty)=J)= 
  \begin{cases} 
   \frac{1}{n^2} & \text{if } 1\in J \text{ and } |J|=n-1,
   \\ \\
    1-\frac{n-1}{n^2} & \text{if } |J|=n, \text{ i.e., } J=N, \\ \\
   0 & \text{ otherwise. } 
  \end{cases}
\]    
This completes the proof of the theorem. 
\end{proof}

\subsection{ Proof of Theorem \ref{theo_2}}
\begin{proof}
	Note that 
	\begin{align}\label{eq_10}
		\left[\halpha\geqslant \left\lceil \frac{\frac{1}{\tau}-(n-1)a}{1-a}\right\rceil\right]
		\iff \left[\tau\geqslant \frac{1}{\halpha+(n-1-\halpha)a} \right], 
	\end{align}
	and 
	\begin{align}\label{eq_11}
		\left[\halpha= \left\lceil \frac{\frac{1}{\tau}-(n-1)a}{1-a}\right\rceil\right]
		\iff \left[\frac{1}{\halpha-1+(n-\halpha)a}>\tau\geqslant \frac{1}{\halpha+(n-1-\halpha)a} \right]. 
	\end{align}
	Also, as $\tau\geqslant \frac{1}{n-1}$, we have $\halpha\leqslant n-1$. We follow the same structure that we used in the proof of Theorem \ref{theo_1}. 
 
 \noindent \textbf{Step 1} Fix an agent sequence  $\uti\in N_\infty$ and let $S$ be the virus spread process induced by $\undertilde{v}$. To shorten notation, for all $i \in N$, let us denote $t_i(\uti)$ by $k_i$. We first prove a claim similar to Claim 1 as in Step 1 of the proof of Theorem \ref{theo_1}.
	
	\vspace{2mm}
	\noindent\textbf{Claim 1}: For all $0\leqslant t<k_1$, $a_i(S_{t+1})=1$ where $\uti_t=i$.
	
	\noindent \textbf{Proof of the claim.}  Let $\uti_0=i$. As $k_1>0$, $i\neq 1$. Since $a_j(S_0)=a>0$ for all $j\in N$ and $I(S_0)=\{1\}$, we have $r_i(S_0)=\frac{1}{(n-1)}$. This means $$b_i(S_0)=\min\left\{1,\frac{\tau}{\frac{1}{(n-1)}}\right\}=\min\{1,(n-1)\tau\}=1$$ as by the assumption of the lemma $\tau\geqslant \frac{1}{(n-1)}$. Thus, $a_i(S_1)=a_i(\hat{S}_0)=1$. Next we introduce an induction hypothesis.
	
	\noindent\textit{Induction Hypothesis:} Given $\bar{t}\in \mathbb{N}_0$ with $\bt<k_1$, we have for all $t<\bt$, $a_j(S_{t+1})=1$ where $\uti_t=j$.

	Let $\uti_{\bt}=i'$ and we show that $a_{i'}(S_{\bt+1})=1$. Note that by Lemma \ref{lem_3}, $I(S_{\bt})=\{1\}$. Moreover, by the induction hypothesis, $a_j(S_{\bt})\geqslant a$ for all $j\in N\setminus \{1\}$. Also, as $\bt<k_1$, we have $a_1(S_{\bt})=a$. Combining all these observations we have, 
	\begin{equation}\label{eq_0}
		\frac{1}{(n-1)}\geqslant r_{i'}(S_{\bt})\geqslant \frac{a}{(n-1)}.
	\end{equation}
	Since $r_{i'}(S_{\bt})>0$, $b_{i'}(S_{\bt})=\min\left\{1,\frac{\tau}{r_{i'}(S_{\bt})}\right\}$; see Remark \ref{rem_0}. Therefore, using (\ref{eq_0}) and the fact $\tau\geqslant \frac{1}{(n-1)}$, we have  $b_{i'}(S_{\bt})=1$. Thus, $a_{i'}(S_{\bt+1})=a_{i'}(\hat{S}_{\bt})=1$. This completes the proof of the claim. \hfill$\square$
	
	We now determine the final infection set. Note that by Claim 1,  $a_i(S_{k_1})=1$ for all  $i\in N_1(\uti)$. Also, by the definition of the process, $a_i(S_{k_1})=a$ for all  $i\notin N_1(\uti)\cup \{1\}$ as they have not updated their actions till the time point $k_1$. Recall that $\hat{S}_{k_1}$ denotes the intermediate state where the only change from $S_{k_1}$ is that agent $\uti_{k_1}$ has updated her action to $b_{\uti_{k_1}}(S_{k_1})$. Since $\uti_{k_1}=1$, we have  $a_i(S_{k_1})=a_i(\hat{S}_{k_1})$ for all $i\neq 1$. Thus, $a_i(\hat{S}_{k_1})=1$ for all $i\in N_1(\uti)$  and $a_i(\hat{S}_{k_1})=a$ for all $i\notin N_1(\uti)\cup \{1\}$.
	
	Moreover, by Remark \ref{rem_0} and the definition of the process,  $a_1(\hat{S}_{k_1})=1$. Consider the time point $k_1+1$. By the definition of the process, an agent $i\neq 1$ will be in $I(S_{k_1+1})$ if $a_i(\hat{S}_{k_1})r_i(\hat{S}_{k_1})>\tau$. For any $i\notin N_1(\uti)\cup 1$, $a_i(\hat{S}_{k_1})=a\leqslant \tau$. Thus, $a_i(\hat{S}_{k_1})r_i(\hat{S}_{k_1})\leqslant \tau$ and any agent in 
	$ N_1(\uti)\cup \{1\}$ will not get infected at $k_1+1$. For agents in $N_1(\uti)$, we distinguish two cases.
	
	\noindent\textbf{Case 1:} $|N_1(\uti)|\geqslant \halpha$. \\
	Since $I(S_{k_1})=\{1\}$, $a_i(\hat{S}_{k_1})=1$ for all $i\in N_1(\uti) \cup \{1\}$, and $a_i(\hat{S}_{k_1})=a$ for all $i\notin N_1(\uti)\cup \{1\}$, it follows that 
	\begin{align*}
		r_i(\hat{S}_{k_1})&=\frac{1}{|N_1(\uti)|+(n-1-|N_1(\uti)|)a} \\
		&=\frac{1}{|N_1(\uti)|(1-a)+(n-1)a} \\
		&\leqslant \frac{1}{\halpha(1-a)+(n-1)a} \hspace{20mm}\text{ (since } |N_1(\uti)|\geqslant \halpha)\\
		&\leqslant \tau \hspace{53mm}\text{ (by (\ref{eq_10}))}
	\end{align*}
	for all $i\in N_1(\uti)$. This implies that no agent in $N_1(\uti)$  gets infected at the time point $k_1+1$. 

	We show that no new agent would get infected after this. We first show that $I(S_{k_1+2})=\{1\}$. Let $\uti_{k_1+1}=i$. If $i\notin I(S_{k_1+1})$ then as $I(S_{k_1})=I(S_{k_1+1})$ by Lemma \ref{lem_2}, we have $I(S_{k_1+1})=I(S_{k_1+2})$. If $i\in I(S_{k_1+1})$ then $i=1$. Moreover, $a_1(S_{k_1+1})=a_1(\hat{S}_{k_1})=1$. Hence, by Lemma \ref{lem_2}, $I(S_{k_1+1})=I(S_{k_1+2})$. Therefore, $I(S_{k_1+2})=\{1\}$. Using the same arguments repeatedly, it follows that $I(S_t)=\{1\}$ for all $t\geqslant k_1+2$. Thus,  $I(S_\infty)=\{1\}$.

	\noindent\textbf{Case 2:} $|N_1(\uti)|\leqslant \halpha-1$. \\
By the assumption of the case, $\halpha\geqslant 1$. First assume $\halpha=1$. This, together with $|N_1(\uti)|\leqslant \halpha-1$, implies $|N_1(\uti)|=0$. Therefore, $k_1=1$. We show that $I(S_\infty)=\{1\}$. Note that by the definition of the process,  $a_i(\hat{S}_{0})=a$ for all $i\neq 1$. As $a\leqslant \tau$, this means no agent in the set $\{2,\ldots,n\}$ will get infected at the time point $1$. Hence, $I(S_1)=\{1\}$. Moreover, as $I(S_1)=\{1\}$ with $a_1(S_1)=1$ and $a_i(S_1)=a\leqslant \tau$ for all $i\neq 1$,  by Lemma \ref{lem_4} it follows that  $I(S_{1})=I(S_\infty)$. Hence, $I(S_\infty)=\{1\}$.

	Now assume $\halpha\geqslant 2$. Thus by the definition of $\halpha$, we have  $\halpha= \left\lceil \frac{\frac{1}{\tau}-(n-1)a}{1-a}\right\rceil$. As $I(S_{k_1})=1$, $a_i(\hat{S}_{k_1})=1$ for all $i\in N_1(\uti)\cup \{1\}$, and $a_i(\hat{S}_{k_1})=a$ for all $i\notin N_1(\uti)\cup \{1\}$, we have for all $i\in N_1(\uti)$
	\begin{align*}
		r_i(\hat{S}_{k_1})&=\frac{1}{|N_1(\uti)|+(n-1-|N_1(\uti)|)a} \\
		&=\frac{1}{|N_1(\uti)|(1-a)+(n-1)a} \\
		&\geqslant \frac{1}{(\halpha-1)(1-a)+(n-1)a} \hspace{20mm}\text{ (since } |N_1(\uti)|\leqslant (\halpha-1))\\
		&=  \frac{1}{(\halpha-1)+(n-\halpha)a}\\
		&>\tau. \hspace{63mm}\text{ (by (\ref{eq_11}))}
	\end{align*}
	This implies all agents in $N_1(\uti)$ will get infected at time point $k_1+1$. Thus, we have  $I(S_{k_1+1})=N_1(\uti)\cup \{1\}$. Further, as,  $a_i(S_{k_1+1})=a_i(\hat{S}_{k_1})=1$  for all $i\in I(S_{k_1+1})$ and $a_i(S_{k_1+1})=a\leqslant \tau$ for all $i\notin I(S_{k_1+1})$,  by Lemma \ref{lem_4} it follows that  $I(S_{k_1+1})=I(S_\infty)$. Hence, $I(S_\infty)=N_1(\uti)\cup \{1\}$.

	\noindent \textbf{Step 2.} Consider the probability space $(N_\infty, \mathcal{F}, \mathbb{P})$ and random variables $S$ and $t_1,\ldots, t_n$. Let $m \in \{2,\ldots, n\}$ be such that $m \leqslant \halpha$.  In view of Case 1 and Case 2 of the current proof, we have  (i) $|I(S_\infty)|\leqslant \halpha$, and (ii) $|I(S_\infty)| = m$  with $1\in I(S_\infty)$ if and only if  $| \{i \in N \mid t_i < t_1\}| = m-1$. Also, $|I(S_\infty)| = 1$ if and only if either $\{i \in N \mid t_i < t_1\}=\emptyset$ or $| \{i \in N \mid t_i < t_1\}| \geqslant \halpha$. Moreover, as $\mathbb{P}$ is uniform, any two subsets of $N$ with same cardinality have the same probability.
	These observations together yield  

	\[
	\mathbb{P}(I(S_\infty)=J)= 
	\begin{cases} 
		1-\frac{\hat{\alpha}-1}{n} & \text{if } J=\{1\}  , \\ \\
		\frac{1}{n \times \Mycomb[n-1]{m-1}} & \text{if }  1\in J \text{ and } |J|=m \text{ where } m\in[2, \halpha],
		\\ \\
		0 & \text{ otherwise.}
	\end{cases}
	\]
This completes the proof of the theorem. \end{proof}

\subsection{Proof of Theorem \ref{theo_3}}

\begin{proof} We start with a lemma that shows for an agent sequence, the infected set remains the same till agent 1 appears for the first time.
	\begin{lemma}\label{lem_6}
		Let $\uti\in N_\infty$ and $\hat{t}\in \mathbb{N}_0$ be such that $\uti_t\neq 1$ for all $t<\hat{t}$. Then, $I(S_t)=\{1\}$ for all $t\leqslant \hat{t}$.
	\end{lemma}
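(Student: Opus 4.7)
The plan is to prove Lemma~\ref{lem_6} by induction on $t$, maintaining the stronger invariant that for every $t \leqslant \hat{t}$ we have $I(S_t) = \{1\}$, $a_1(S_t) = a$, and $a_k(S_t) \in \{a, 1\}$ for every $k \in N$. The base case $t = 0$ is immediate from the initial conditions imposed in \S\ref{sec:results}.

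For the inductive step I would assume the invariant at time $t < \hat{t}$, let $\uti_t = i$ (so $i \neq 1$ by the hypothesis on $\uti$, and hence $i \notin I(S_t)$), and exploit the following simple observation: because every agent's action at $S_t$ is at least $a$, for any $j \neq 1$ one has
\begin{equation*}
r_j(S_t) = \frac{a_1(S_t)}{\sum_{k \neq j} a_k(S_t)} \leqslant \frac{a}{(n-1)a} = \frac{1}{n-1} \leqslant \tau,
\end{equation*}
where the last step uses the standing assumption $\tau \geqslant (n-1)^{-1}$. Applied to $j = i$, Lemma~\ref{rem_0} then forces $b_i(S_t) = 1$, so in the intermediate state $\hat{S}_t$ agent $i$'s action rises to $1$ while every other action is preserved. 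This keeps each $a_k(\hat{S}_t)$ in $\{a, 1\}$ and keeps $a_1(\hat{S}_t) = a$, so the same bound $r_j(\hat{S}_t) \leqslant 1/(n-1) \leqslant \tau$ applies for all $j \neq 1$.

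To close the inductive step, I would combine this bound with $a_j(\hat{S}_t) \leqslant 1$ to conclude $a_j(\hat{S}_t) r_j(\hat{S}_t) \leqslant \tau$ for every $j \neq 1$, which rules out any fresh infection at epoch $t+1$. Observation~\ref{rem_0.25}(i) then yields $I(S_{t+1}) = I(\hat{S}_t) = I(S_t) = \{1\}$, and the invariants on the action profile pass to $S_{t+1}$ because $\uti_t \neq 1$ leaves $a_1$ unchanged while the only updated coordinate is now $1$.

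The only place where something could go wrong is the ``actions stay in $\{a,1\}$'' part of the invariant; but the best-response computation above shows that whenever any agent $\neq 1$ is picked, their action is set to exactly $1$, so monotonicity of the action profile is automatic. The real engine of the proof is the uniform exposure bound $r_j \leqslant 1/(n-1) \leqslant \tau$, which holds precisely because the unique infected agent, agent~$1$, has not been chosen yet and therefore continues to socialise at the relatively modest level $a$ rather than $1$.
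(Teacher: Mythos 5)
Your proof is correct, and it closes the induction by a different mechanism than the paper. Both arguments hinge on the same exposure bound --- since agent $1$ is the only infected agent and has not yet been selected, the numerator of $r_j$ is $a_1 = a$ while the denominator is at least $(n-1)a$, giving $r_j \leqslant \frac{1}{n-1} \leqslant \tau$ --- and the paper carries out exactly this computation, but only to establish the base case $I(S_1)=\{1\}$. For every later epoch the paper instead invokes its general monotonicity result (Lemma~\ref{lem_2}): once the infected set has been constant for one step and the chosen agent is uninfected, the infected set cannot grow. You avoid Lemma~\ref{lem_2} entirely by strengthening the induction hypothesis to record that the action profile stays in $\{a,1\}^n$ with $a_1(S_t) = a$, which lets you rerun the explicit exposure computation at every epoch; the fact that any selected agent $i \neq 1$ plays exactly $1$ (because $r_i(S_t) \leqslant \tau$ forces $\min\{1,\tau/r_i(S_t)\}=1$ in Lemma~\ref{rem_0}) is what keeps the invariant alive. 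The trade-off: your argument is self-contained and more elementary for this particular lemma, and it makes transparent that agent $1$'s inertia is the engine of the result, whereas the paper's route reuses machinery it needs elsewhere and never has to track the exact values of the action profile. One small point worth stating explicitly in your write-up is that the denominator $\sum_{k\neq j}a_k(S_t)$ is strictly positive because $a > \tau \geqslant \frac{1}{n-1} > 0$ in the regime of Theorem~\ref{theo_3}, so $r_i(S_t) \neq 0$ and it is indeed the third case of Lemma~\ref{rem_0} that applies.
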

	\begin{proof}
		Note that if $\hat{t}=0$ then there is nothing to show. So, assume $\hat{t}\geqslant 1$. We use induction to prove this. As the base case, we show that $I(S_1)=\{1\}$. Let $\uti_0=i$. Since $\hat{t}\geqslant 1$, $i\neq 1$. Moreover, as $\tau <a <1$, $r_i(S_0)=\frac{1}{(n-1)}$. Hence, 
		$$b_i(S_0)=\min\left\{1,\frac{\tau}{\frac{1}{(n-1)}}\right\}=\min\{1,(n-1)\tau\}=1$$
		as by our assumption $\tau\geqslant \frac{1}{(n-1)}$. This means agent $i$ will not get infected. For any $j\notin \{1,i\}$, $a_j(\hat{S}_0)=a_j(S_0)=a$ and $r_j(\hat{S}_0)=\frac{a}{(n-2)a+1}\leqslant \frac{1}{(n-1)}$. Thus, $$a_j(\hat{S}_0)r_j(\hat{S}_0)=a\frac{a}{(n-2)a+1}\leqslant \frac{a}{(n-1)}\leqslant \frac{1}{(n-1)}\leqslant \tau.$$ So, agent $j$ will also not get infected at $t=1$. Thus, $I(S_1)=\{1\}$. Next we introduce an induction hypothesis.

			%
		
		\noindent\textit{Induction Hypothesis:} Given $\bar{t}\in \mathbb{N}_0$ with $\hat{t}\geqslant \bar{t}>1$, we have $I(S_1)=\cdots=I(S_{\bar{t}-1})=\{1\}$.
		
		We show that $I(S_{\bar{t}})=\{1\}$. Let $\uti_{\bar{t}-1}=i$. Since $\hat{t}\geqslant \bar{t}$, this means $i\neq 1$. Hence, $i\notin I(S_{\bt-1})$. As $\bt>1$, we have $I(S_{\bar{t}-2})=I(S_{\bt-1})$. This together with Lemma \ref{lem_2}, implies $I(S_{\bt})=I(S_{\bt-1})=\{1\}$. Thus, by induction, we have $I(S_{\hat{t}})=\{1\}$. This completes the proof of the lemma.  \end{proof}
	
	We complete the proof in two steps. In Step 1, we explore how the infection spreads when agents update their actions according to a fixed agent sequence, and in Step 2 we use this to explore how infection spreads when agents update their actions randomly.

	\noindent\textbf{Step 1:} Fix an agent sequence  $\uti\in N_\infty$ and let $S$ be the DVSP induced by $\undertilde{v}$. To shorten notation, for all $i \in N$, let us denote $t_i(\uti)$ by $k_i$. 
	
	\vspace{2mm}
	\noindent\textbf{Claim 1}: For all $0\leqslant t<k_1$, $a_i(S_{t+1})=1$ where $\uti_t=i$.
	
	\noindent \textbf{Proof of the claim.}  Let $\uti_0=i$. As $k_1>0$, $i\neq 1$. Since $a_j(S_0)=a>0$ for all $j\in N$ and $I(S_0)=\{1\}$, we have $r_i(S_0)=\frac{1}{(n-1)}$. This means $$b_i(S_0)=\min\left\{1,\frac{\tau}{\frac{1}{(n-1)}}\right\}=\min\{1,(n-1)\tau\}=1,$$ as by our assumption $\tau\geqslant \frac{1}{(n-1)}$. Thus, $a_i(S_1)=a_i(\hat{S}_0)=1$. Next we introduce an induction hypothesis.
	
	\noindent\textit{Induction Hypothesis:} Given $\bar{t}\in \mathbb{N}_0$ with $\bt<k_1$, we have for all $t<\bt$, $a_j(S_{t+1})=1$ where $\uti_t=j$.

	Let $\uti_{\bt}=i'$ and we show that $a_{i'}(S_{\bt+1})=1$. Note that by Lemma \ref{lem_6}, $I(S_{\bt})=\{1\}$. Moreover, by the induction hypothesis, $a_j(S_{\bt})\geqslant a$ for all $j\in N\setminus \{1\}$. Also, as $\bt<k_1$, we have $a_1(S_{\bt})=a$. Combining all these observations we have, 
	\begin{equation}\label{eq_2}
		\frac{1}{(n-1)}\geqslant r_{i'}(S_{\bt})\geqslant \frac{a}{(n-1)}.
	\end{equation}
	Since $r_{i'}(S_{\bt})>0$, $b_{i'}(S_{\bt})=\min\left\{1,\frac{\tau}{r_{i'}(S_{\bt})}\right\}$; see Remark \ref{rem_0}. Therefore, using (\ref{eq_2}) and the fact $\tau\geqslant \frac{1}{(n-1)}$, we have  $b_{i'}(S_{\bt})=1$. Thus, $a_{i'}(S_{\bt+1})=a_{i'}(\hat{S}_{\bt})=1$. This completes the proof of the claim. \hfill$\square$
	
	We distinguish some cases based on $|N_1(\uti)|$.
	
	\vspace{2mm}
	\noindent\textbf{Case 1:} $|N_1(\uti)|\geqslant \talpha$.\\
	We show that no new agent will get infected and $I(S_\infty)=\{1\}$. By Claim 1,  $a_i(S_{k_1})=1$ for all  $i\in N_1(\uti)$. By the definition of the process, $a_i(S_{k_1})=a$ for all  $i\notin N_1(\uti)\cup \{1\}$ as they have not updated their actions till the time point $k_1$. Recall that $\hat{S}_{k_1}$ denotes the intermediate state where the only change from $S_{k_1}$ is that agent $\uti_{k_1}$ has updated her action to $b_{\uti_{k_1}}(S_{k_1})$. Since $\uti_{k_1}=1$, we have  $a_i(S_{k_1})=a_i(\hat{S}_{k_1})$ for all $i\neq 1$. Thus, $a_i(\hat{S}_{k_1})=1$ for all $i\in |N_1(\uti)|$  and $a_i(\hat{S}_{k_1})=1$ for all $i\notin N_1(\uti)\cup \{1\}$.
	
	By Remark \ref{rem_0} and the definition of the process,  $a_1(\hat{S}_{k_1})=1$. Consider the time point $k_1+1$. By the definition of the process, an agent $i\neq 1$ will be in $I(S_{k_1+1})$ if $a_i(\hat{S}_{k_1})r_i(\hat{S}_{k_1})>\tau$. Since $I(S_{k_1})=\{1\}$, $a_i(\hat{S}_{k_1})=1$ for all $i\in N_1(\uti) \cup \{1\}$, and $a_i(\hat{S}_{k_1})=a$ for all $i\notin N_1(\uti)\cup \{1\}$, it follows that for all $i\in N_1(\uti)$ 
	$$r_i(\hat{S}_{k_1})= \frac{1}{|N_1(\uti)|+(n-1-|N_1(\uti)|)a}$$ and 
	\begin{align}
		a_i(\hat{S}_{k_1})r_i(\hat{S}_{k_1})&= \frac{1}{|N_1(\uti)|+(n-1-|N_1(\uti)|)a}\nonumber\\
		&=\frac{1}{|N_1(\uti)|(1-a)+(n-1)a}. \label{eq_3}
	\end{align}
	Recall that by the assumption of the case, $|N_1(\uti)|\geqslant \talpha$. This together with $\talpha=\max\left\{1, \left\lceil \frac{1-(n-1)a\tau}{\tau(1-a)}\right\rceil\right\}$ implies
	\begin{align}
		|N_1(\uti)|\geqslant  \frac{1-(n-1)a\tau}{\tau(1-a)}
		\implies \tau \geqslant \frac{1}{|N_1(\uti)|(1-a)+(n-1)a}. \label{eq_4}
	\end{align}
	Combining (\ref{eq_3}) and (\ref{eq_4}), we may conclude that agent $i$ will not be infected at the time point $t+1$. Similar arguments show that any agent $j\notin N\cup \{1\}$ will not be infected at the time point $t+1$. Hence, $I(S_{k_1+1})=\{1\}$. 
	
	We show that no new agent would get infected after this. We first show that $I(S_{k_1+2})=\{1\}$. Let $\uti_{k_1+1}=i$. If $i\notin I(S_{k_1+1})$ then as $I(S_{k_1})=I(S_{k_1+1})$ by Lemma \ref{lem_2}, we have $I(S_{k_1+1})=I(S_{k_1+2})$. If $i\in I(S_{k_1+1})$ then $i=1$. Moreover, $a_1(S_{k_1+1})=a_1(\hat{S}_{k_1})=1$. Hence, by Lemma \ref{lem_2}, $I(S_{k_1+1})=I(S_{k_1+2})$. Therefore, $I(S_{k_1+2})=\{1\}$. Using the same arguments repeatedly, it follows that $I(S_t)=\{1\}$ for all $t\geqslant k_1+2$. Thus,  $I(S_\infty)=\{1\}$.
	
	\vspace{2mm}
	\noindent\textbf{Case 2:} $|N_1(\uti)|\leqslant \talpha-1$.\\
In the following claim, we show that at time point $k_1+1$, the infected set is $N_1(\uti)\cup \{1\}$.

\begin{claim}
		$I(S_{k_1+1})=N_1(\uti)\cup \{1\}$.
\end{claim}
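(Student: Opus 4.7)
The plan is to compute, at the intermediate state $\hat{S}_{k_1}$, the action profile of all agents and then check, for every $j \in N$, whether the infection condition $a_j(\hat{S}_{k_1}) r_j(\hat{S}_{k_1}) > \tau$ holds. By Lemma~\ref{lem_6} we have $I(S_{k_1}) = \{1\}$, and since $\uti_{k_1} = 1 \in I(S_{k_1})$, Lemma~\ref{rem_0} yields $b_1(S_{k_1}) = 1$. Combining this with Claim~1 (which gives $a_i(S_{t_i+1}) = 1$ for each $i \in N_1(\uti)$) and Observation~\ref{rem_0.5} (which fixes the actions of agents not chosen between $t_i+1$ and $k_1$), I obtain
$$a_i(\hat{S}_{k_1}) = 1 \text{ for } i \in N_1(\uti) \cup \{1\}, \qquad a_i(\hat{S}_{k_1}) = a \text{ for } i \notin N_1(\uti) \cup \{1\}.$$

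For every $i \in N_1(\uti)$, a direct computation (noting $I(\hat{S}_{k_1}) = I(S_{k_1}) = \{1\}$) yields $a_i(\hat{S}_{k_1}) r_i(\hat{S}_{k_1}) = [|N_1(\uti)|(1-a) + (n-1)a]^{-1}$. When $\talpha \geqslant 2$, the ceiling definition of $\talpha$ combined with $|N_1(\uti)| \leqslant \talpha - 1$ gives $|N_1(\uti)| < \frac{1-(n-1)a\tau}{\tau(1-a)}$, which rearranges to $\tau < [|N_1(\uti)|(1-a)+(n-1)a]^{-1}$; hence every such $i$ becomes infected at time $k_1+1$. When $\talpha = 1$, we have $N_1(\uti) = \emptyset$ and this step is vacuous. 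For each $j \notin N_1(\uti) \cup \{1\}$, a similar computation gives $a_j(\hat{S}_{k_1}) r_j(\hat{S}_{k_1}) = \frac{a}{1 + |N_1(\uti)|(1-a) + (n-2)a} \leqslant \frac{a}{1 + (n-2)a} \leqslant \frac{1}{n-1} \leqslant \tau$, where the first inequality uses $|N_1(\uti)|(1-a) \geqslant 0$, the second uses $a \leqslant 1$ (a one-line algebra check: $a(n-1) \leqslant 1 + (n-2)a \iff a \leqslant 1$), and the third is the standing hypothesis $\tau \geqslant (n-1)^{-1}$. So no such $j$ becomes infected. Combining both conclusions with $1 \in I(S_{k_1}) \subseteq I(S_{k_1+1})$ yields $I(S_{k_1+1}) = N_1(\uti) \cup \{1\}$.

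The main nuance is the estimate for agents outside $N_1(\uti) \cup \{1\}$: although their exposure ratio has the possibly large numerator $1$ over a denominator that shrinks as $|N_1(\uti)|$ decreases, the extra multiplicative factor of $a$ coming from their own unchanged action, combined with the standing hypothesis $\tau \geqslant (n-1)^{-1}$, keeps the product below $\tau$ regardless of the precise value of $|N_1(\uti)|$. The rest is routine algebraic manipulation of the definitions of $r_i$ and $\talpha$.
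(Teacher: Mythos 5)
Your proposal is correct and follows essentially the same route as the paper: compute the action profile at the intermediate state $\hat{S}_{k_1}$, evaluate $a_j(\hat{S}_{k_1})r_j(\hat{S}_{k_1})$ separately for agents in $N_1(\uti)$ and outside $N_1(\uti)\cup\{1\}$, and compare with $\tau$ using the ceiling definition of $\talpha$. The only cosmetic differences are that you dispose of the degenerate case via $\talpha=1\Rightarrow N_1(\uti)=\emptyset$ rather than the paper's separate sub-case on whether $\talpha$ equals the ceiling expression, and you bound the outsiders' exposure directly by $\frac{a}{1+(n-2)a}\leqslant\frac{1}{n-1}\leqslant\tau$ instead of rearranging the infection condition into an impossible inequality on $|N_1(\uti)|$; both are equivalent.
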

\noindent \textbf{Proof of the claim:} 
Recall that $\talpha=\max\left\{1, \left\lceil \frac{1-(n-1)a\tau}{\tau(1-a)}\right\rceil\right\}$. First assume $\talpha\neq \left\lceil \frac{1-(n-1)a\tau}{\tau(1-a)}\right\rceil$, i.e.,  $\talpha=1$ and $1-(n-1)a\tau\leqslant 0$. This, together with the assumption of the case, implies $|N_1(\uti)|=0$.  Therefore, $k_1=1$. Hence, to prove the claim, it is enough to show that $I(S_1)=\{1\}$. Note that by the definition of the process, $a_1(\hat{S}_0)=1$ and  $a_i(\hat{S}_{0})=a$ for all $i\neq 1$. Thus,

 	\begin{align*}
		r_i(\hat{S}_{0})&=\frac{1}{1+(n-2)a} \\
		&\leqslant\frac{1}{(n-1)a} \\
		&\leqslant \tau. \hspace{20mm}\text{ (since } 1-(n-1)a\tau\leqslant 0)\\
	\end{align*}
 This implies $I(S_{1})=\{1\}$. Now assume $\talpha=\left\lceil \frac{1-(n-1)a\tau}{\tau(1-a)}\right\rceil$. Consider an agent $i\in N_1(\uti)$. Using similar arguments as in (\ref{eq_3}), we may show that $$a_i(\hat{S}_{k_1})r_i(\hat{S}_{k_1})=\frac{1}{|N_1(\uti)|(1-a)+(n-1)a}.$$ This, together with  $\talpha=\left\lceil \frac{1-(n-1)a\tau}{\tau(1-a)}\right\rceil$ and $|N_1(\uti)|\leqslant \talpha-1$, implies  $a_i(\hat{S}_{k_1})r_i(\hat{S}_{k_1})>\tau$ and hence, agent $i$ will get infected at the time point $t+1$. For any $j\notin N_1(\uti)\cup \{1\}$,
	$$r_j(\hat{S}_{k_1})= \frac{1}{|N_1(\uti)|+1+(n-1-|N_1(\uti)|-1)a}$$ and 
	\begin{align}
		a_j(\hat{S}_{k_1})r_j(\hat{S}_{k_1})&= \frac{a}{|N_1(\uti)|+1+(n-1-|N_1(\uti)|-1)a}\nonumber\\
		&=\frac{a}{|N_1(\uti)|(1-a)+(n-2)a+1}. \nonumber
	\end{align}
	Hence, $j$ gets infected at $t+1$ if 
	\begin{align}
		\frac{a}{|N_1(\uti)|(1-a)+(n-2)a+1}>\tau 
		\implies  \frac{a-\tau(n-1)a}{\tau(1-a)}>|N_1(\uti)|. \nonumber
	\end{align}
	But this does not hold as $\frac{a-\tau(n-1)a}{\tau(1-a)}\leqslant 0$ and $|N_1(\uti)|\geqslant 0$. So, agent $j$ does not get infected at $t+1$. Thus, $I(S_{k_1+1})=N_1(\uti)\cup 1$. This completes the proof of the claim. \hfill$\square$
	
	We now determine the final infected set. To do so we consider two sub-cases based on the value of $|N_1(\uti)|$.
	
	\vspace{2mm}
	\noindent\textbf{Case 2.1:} $|N_1(\uti)|+1< \balpha$.\\
	We show that no new agent would get infected after $k_1+1$. We first show that $I(S_{k_1+2})=N_1(\uti)\cup \{1\}$. Let $\uti_{k_1+1}=i$. If $i\in I(S_{k_1+1})$ then $i\in N_1(\uti)\cup \{1\}$. Moreover, $a_i(S_{k_1+1})=a_i(\hat{S}_{k_1})=1$. Hence, by Lemma \ref{lem_2}, $I(S_{k_1+1})=I(S_{k_1+2})$. If $i\notin I(S_{k_1+1})$ then since $r_i(S_{k_1+1})=\frac{|N_1(\uti)|+1}{|N_1(\uti)|+1+(n-1-|N_1(\uti)|-1)a}\neq 0$, agent $i$ will choose $\min\left\{1,\frac{\tau}{r_i(S_{k_1+1})}\right\}$ as her action $a_i(\hat{S}_{k_1+1})$ at $\hat{S}_{k_1+1}$. This means $a_i(\hat{S}_{k_1+1})r_i(\hat{S}_{k_1+1})\leqslant \tau$ and agent $i$ will not get infected at $k_1+2$. To show that any agent $j\in I(S_{k_1+1})\setminus \{i\}$ will not get infected at $k_1+2$, we first prove a claim. 
	
	\begin{claim}\label{cl_1}
		$a_i(\hat{S}_{k_1+1})\geqslant a$.
	\end{claim}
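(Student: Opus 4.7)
The plan is to unpack Claim \ref{cl_1} by directly computing $a_i(\hat{S}_{k_1+1})$ via the best response formula from Lemma~\ref{rem_0} and then invoking the hypothesis $|N_1(\uti)|+1<\balpha$ to bound the viral exposure $r_i(S_{k_1+1})$. Since $\uti_{k_1+1}=i$ and $i\notin I(S_{k_1+1})=N_1(\uti)\cup\{1\}$, we have $a_i(\hat{S}_{k_1+1})=b_i(S_{k_1+1})$. If $r_i(S_{k_1+1})=0$, then $b_i(S_{k_1+1})=1\geqslant a$ and we are done. Otherwise $b_i(S_{k_1+1})=\min\{1,\tau/r_i(S_{k_1+1})\}$, and it suffices to show $\tau/r_i(S_{k_1+1})\geqslant a$, i.e.\ $r_i(S_{k_1+1})\leqslant \tau/a$.

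Next I would compute $r_i(S_{k_1+1})$ explicitly. From the description of $S_{k_1+1}$ (all agents in $N_1(\uti)\cup\{1\}$ have action $1$ and all agents outside it have action $a$), writing $m=|N_1(\uti)|+1$ I get
\[
r_i(S_{k_1+1})=\frac{m}{m+(n-m-1)a}=\frac{m}{m(1-a)+(n-1)a}.
\]
Then the desired inequality $r_i(S_{k_1+1})\leqslant \tau/a$ is algebraically equivalent to $m(a-\tau(1-a))\leqslant \tau(n-1)a$. Note $a-\tau(1-a)>0$, since $\tau<a<1$ gives $\tau<a/(1-a)$ immediately, so one may divide to obtain
\[
m \leqslant \frac{\tau(n-1)a}{a-\tau(1-a)}.
\]

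Finally I would invoke the definition $\balpha=\lfloor \tau(n-1)a/(a-\tau(1-a))\rfloor+1$ and the case assumption $m=|N_1(\uti)|+1<\balpha$. Since $m$ is a positive integer, $m<\balpha$ is the same as $m\leqslant \balpha-1=\lfloor \tau(n-1)a/(a-\tau(1-a))\rfloor$, which in turn yields $m\leqslant \tau(n-1)a/(a-\tau(1-a))$, the bound we need. Hence $\tau/r_i(S_{k_1+1})\geqslant a$, and combining with $a\leqslant 1$, $a_i(\hat{S}_{k_1+1})=\min\{1,\tau/r_i(S_{k_1+1})\}\geqslant a$.

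There is no real obstacle here; the argument is a straightforward substitution and monotonicity check. The only mildly delicate point is ensuring the sign $a-\tau(1-a)>0$ before cross-multiplying, which is where the standing hypothesis $\tau<a$ of the subsection enters in an essential way.
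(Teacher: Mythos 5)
Your proposal is correct and follows essentially the same route as the paper: both compute $r_i(S_{k_1+1})=\frac{m}{m(1-a)+(n-1)a}$ with $m=|N_1(\uti)|+1$, reduce the claim to $m\leqslant \frac{(n-1)a\tau}{a-\tau(1-a)}$, and deduce this from $m<\balpha$ via the definition of $\balpha$ as a floor plus one. The only difference is cosmetic — the paper rewrites $\tau/r_i(S_{k_1+1})$ as $\tau(1-a)+\frac{\tau a(n-1)}{m}$ and bounds it below, while you cross-multiply after checking $a-\tau(1-a)>0$; these are the same algebra.
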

	\noindent \textbf{Proof of the claim:} Note that if	$a_i(\hat{S}_{k_1+1})=1$ then the claim holds as $a\leqslant 1$. If $a_i(\hat{S}_{k_1+1})=\frac{\tau}{r_i(S_{k_1+1})}$ then 
	\begin{align}
		a_i(\hat{S}_{k_1+1})&=\frac{\tau}{r_i(S_{k_1+1})} \nonumber\\
		&=\tau(1-a)+\frac{\tau a(n-1)}{|N_1(\uti)|+1}. \label{eq_5}
	\end{align}
	Moreover, by the assumption of the case $|N_1(\uti)|+1<\balpha$. This together with $\balpha =\left\lfloor \frac{(n-1)a\tau}{a-\tau(1-a)}\right\rfloor +1$ and (\ref{eq_5}) implies
	\begin{align}
		a_i(\hat{S}_{k_1+1})&\geqslant \tau(1-a)+\frac{[\tau a(n-1)](a-\tau(1-a))}{(n-1)a\tau}. \nonumber \\
		&=a. \nonumber
	\end{align}
	This completes the proof of the claim. \hfill$\square$
	
	For any $j\in I(S_{k_1+1})\setminus \{i\}$, $a_j(\hat{S}_{k_1+1})=a$ and $$r_j(\hat{S}_{k_1+1})=\frac{|N_1(\uti)|+1}{|N_1(\uti)|+1+(n-1-|N_1(\uti)|-2)a+a_i(\hat{S}_{k_1+1})}.$$ Thus,
	\begin{align}
		a_j(\hat{S}_{k_1+1})r_j(\hat{S}_{k_1+1})&=\frac{a(|N_1(\uti)|+1)}{|N_1(\uti)|+1+(n-1-|N_1(\uti)|-2)a+a_i(\hat{S}_{k_1+1})} \nonumber \\
		&\leqslant \frac{a(|N_1(\uti)|+1)}{|N_1(\uti)|+1+(n-1-|N_1(\uti)|-1)a}  \hspace{10mm} \mbox{ (as by Claim } \ref{cl_1}, a_i(\hat{S}_{k_1+1})\geqslant a) \nonumber \\
		&=ar_i(\hat{S}_{k_1+1}) \nonumber \\
		&\leqslant \tau \hspace{10mm} \mbox{ (as } a_i(\hat{S}_{k_1+1})r_i(\hat{S}_{k_1+1})\leqslant \tau \text{ and } a_i(\hat{S}_{k_1+1})\geqslant a). \nonumber 
	\end{align}
	Hence, agent $j$ will not get infected at $k_1+2$. This concludes that $I(S_{k_2+1})=N_1(\uti)\cup \{1\}$. Now using similar logic as in Case 1, we may show that no agent would get infected after this and $I(S_\infty)=N_1(\uti)\cup \{1\}$. 
	
	\vspace{2mm}
	\noindent\textbf{Case 2.2:} $|N_1(\uti)|+1\geqslant \balpha$.\\
	First assume that $\uti_{k_1+1}=i$ where $i\in N_1(\uti)\cup \{1\}$. We show that $I(S_{\infty})=N$. Note that as $i\in N_1(\uti)\cup \{1\}$, $a_i(\hat{S}_{k_1+1})=1$. Thus, for any $j\notin N_1(\uti)\cup \{1\}$,
	$$r_j(\hat{S}_{k_1+1})=\frac{|N_1(\uti)|+1}{|N_1(\uti)|+1+(n-1-|N_1(\uti)|-1)a}$$ and hence, 
	\begin{align}
		a_j(\hat{S}_{k_1+1})r_j(\hat{S}_{k_1+1})&= \frac{a(|N_1(\uti)|+1)}{|N_1(\uti)|+1+(n-1-|N_1(\uti)|-1)a}   \nonumber \\
		&= \frac{a(|N_1(\uti)|+1)}{(|N_1(\uti)|+1)(1-a)+(n-1)a}\nonumber \\
		&\geqslant  \frac{a\balpha}{\balpha(1-a)+(n-1)a}  \hspace{10mm} \mbox{ (as } |N_1(\uti)|+1\geqslant \balpha) \nonumber \\
		&> \tau.  \hspace{35mm} \mbox{ (as } \balpha>\frac{(n-1)a\tau}{a-\tau(1-a)}) \nonumber
	\end{align}
	
	Therefore, $I(S_{k_1+2})=N$ and  $I(S_\infty)=N$. 
	
	Now assume that $\uti_{k_1+1}=i$ where $i\notin N_1(\uti)\cup \{1\}$. We show that $I(S_{k_1+2})=N\setminus i$.  
	%
	%
	%
	Since $i\notin I(S_{k_1+1})$ and $\uti_{k_1+1}=i$, agent $i$ will not get infected at $k_1+2$ (Observation \ref{obs_4}). Consider $j\notin N_1(\uti)\cup 1$ with $j\neq i$.  We first prove a claim.
	
	\begin{claim}\label{cl_2}
		$\tau<a_i(\hat{S}_{k_1+1})< a$.
	\end{claim}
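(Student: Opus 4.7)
The plan is to exploit the explicit form of $r_i(S_{k_1+1})$ together with the defining inequality of $\balpha$. Since $i\notin I(S_{k_1+1})=N_1(\uti)\cup\{1\}$ and $\uti_{k_1+1}=i$, Lemma~\ref{rem_0} yields $a_i(\hat{S}_{k_1+1})=b_i(S_{k_1+1})=\min\{1,\tau/r_i(S_{k_1+1})\}$. I would first compute $r_i(S_{k_1+1})$ explicitly: by Claim~1 of the current proof every agent in $N_1(\uti)$ carries action $1$, agent $1$ plays $1$ upon being chosen at time $k_1$, and every uninfected agent other than $i$ still carries the initial action $a$. Thus
\[
r_i(S_{k_1+1})=\frac{|N_1(\uti)|+1}{(|N_1(\uti)|+1)+(n-|N_1(\uti)|-2)\,a}.
\]

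For the upper bound $a_i(\hat{S}_{k_1+1})<a$, the key is to show $\tau<a\,r_i(S_{k_1+1})$, since once this is known the best response becomes $\tau/r_i(S_{k_1+1})$ and lies strictly below $a\leqslant 1$. The case hypothesis $|N_1(\uti)|+1\geqslant\balpha$, together with $\balpha=\lfloor (n-1)a\tau/(a-\tau(1-a))\rfloor+1$ and the fact that $a-\tau(1-a)>0$ because $a>\tau$, gives $(|N_1(\uti)|+1)[a-\tau(1-a)]>(n-1)a\tau$. Regrouping the $a$-terms and $\tau$-terms exactly as in Claim~\ref{cl_1} produces $a(|N_1(\uti)|+1)>\tau\,[(|N_1(\uti)|+1)(1-a)+(n-1)a]$, which is precisely $a\,r_i(S_{k_1+1})>\tau$.

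For the lower bound $\tau<a_i(\hat{S}_{k_1+1})$, I would reduce to the statement $r_i(S_{k_1+1})<1$, because then $\tau/r_i(S_{k_1+1})>\tau$. Strict inequality $r_i<1$ holds whenever some uninfected agent other than $i$ exists, since that agent's contribution $a<1$ makes the denominator strictly exceed the numerator. The main obstacle I anticipate is the algebraic rearrangement in the upper-bound step, where the floor-based definition of $\balpha$ must be converted into a clean multiplicative inequality among $\tau$, $a$, $n$ and $|N_1(\uti)|$. A minor subtlety is the boundary configuration $|N_1(\uti)|=n-2$, in which $i$ is the unique uninfected agent, $r_i=1$, and $a_i$ collapses to $\tau$; I would address this either by using the outer case assumption $|N_1(\uti)|+1\leqslant\talpha$ to exclude it, or by noting that the downstream use of Claim~\ref{cl_2} in the subsequent infection step only requires the weak inequality $\tau\leqslant a_i<a$.
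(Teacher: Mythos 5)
Your argument is correct and follows essentially the same route as the paper: both proofs compute $r_i(S_{k_1+1})=\frac{|N_1(\uti)|+1}{|N_1(\uti)|+1+(n-|N_1(\uti)|-2)a}$ explicitly, derive the upper bound from $|N_1(\uti)|+1\geqslant\balpha>\frac{(n-1)a\tau}{a-\tau(1-a)}$ (your direct rearrangement to $a\,r_i(S_{k_1+1})>\tau$ is algebraically equivalent to the paper's passage through the identity $a_i(\hat{S}_{k_1+1})=\tau(1-a)+\frac{\tau a(n-1)}{|N_1(\uti)|+1}$), and derive the lower bound from $r_i(S_{k_1+1})<1$. The boundary configuration $|N_1(\uti)|=n-2$ that you flag is a genuine issue, and it is present in the paper's own proof, which asserts $r_i(S_{k_1+1})<1$ without comment: when $\talpha=n-1$ (e.g.\ $\tau=\frac{1}{n-1}$) the case hypotheses $|N_1(\uti)|\leqslant\talpha-1$ and $|N_1(\uti)|+1\geqslant\balpha$ do not exclude $|N_1(\uti)|=n-2$, and there $r_i(S_{k_1+1})=1$, so $a_i(\hat{S}_{k_1+1})=\tau$ exactly and the strict lower bound fails. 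Be aware, though, that neither of your proposed repairs closes this: the Case~2 bound does not rule the configuration out, and the weak inequality $\tau\leqslant a_i$ does not suffice downstream, since Case~2.2.2 infects agent $i$ at epoch $k_1+3$ precisely by combining $a_i(\hat{S}_{k_1+2})>\tau$ with $r_i(\hat{S}_{k_1+2})=1$ in the strict condition $a_i(\hat{S}_{k_1+2})r_i(\hat{S}_{k_1+2})>\tau$.
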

	\noindent \textbf{Proof of the claim:}  We show that $\tau<\frac{\tau}{r_i(S_{k_1+1})}<a$. This together with $a<1$ proves the claim. As $r_i(S_{k_1+1})=\frac{|N_1(\uti)|+1}{|N_1(\uti)|+1+(n-1-|N_1(\uti)|-1)a}<1$, we have $\tau<\frac{\tau}{r_i(S_{k_1+1})}$. To see $\frac{\tau}{r_i(S_{k_1+1})}<a$,  recall that by (\ref{eq_5}) 
	$$a_i(\hat{S}_{k_1+1})=\tau(1-a)+\frac{\tau a(n-1)}{|N_1(\uti)|+1}.$$
	
	\noindent Moreover, by the assumption of the case $|N_1(\uti)|+1\geqslant \balpha$. This together with $\balpha > \frac{(n-1)a\tau}{a-\tau(1-a)}$ implies
	\begin{align}
		a_i(\hat{S}_{k_1+1})&< \tau(1-a)+\frac{[\tau a(n-1)](a-\tau(1-a))}{(n-1)a\tau}. \nonumber \\
		&=a. \nonumber
	\end{align}
	This completes the proof of the claim. \hfill$\square$
	
	For any $j\in I(S_{k_1+1})\setminus \{i\}$, $a_j(\hat{S}_{k_1+1})=a$ and $$r_j(\hat{S}_{k_1+1})=\frac{|N_1(\uti)|+1}{|N_1(\uti)|+1+(n-1-|N_1(\uti)|-2)a+a_i(\hat{S}_{k_1+1})}.$$ Thus,
	\begin{align}
		a_j(\hat{S}_{k_1+1})r_j(\hat{S}_{k_1+1})&=\frac{a(|N_1(\uti)|+1)}{|N_1(\uti)|+1+(n-1-|N_1(\uti)|-2)a+a_i(\hat{S}_{k_1+1})} \nonumber \\
		&> \frac{a(|N_1(\uti)|+1)}{|N_1(\uti)|+1+(n-1-|N_1(\uti)|-1)a}  \hspace{10mm} \mbox{ (as by Claim } \ref{cl_2}, a_i(\hat{S}_{k_1+1})< a) \nonumber \\
		&= \frac{a(|N_1(\uti)|+1)}{(|N_1(\uti)|+1)(1-a)+(n-1)a}\nonumber \\
		&\geqslant  \frac{a\balpha}{\balpha(1-a)+(n-1)a}  \hspace{10mm} \mbox{ (as } |N_1(\uti)|+1\geqslant \balpha) \nonumber \\
		&> \tau.  \hspace{10mm} \mbox{ (as } \balpha>\frac{(n-1)a\tau}{a-\tau(1-a)}) \nonumber
	\end{align}
	Hence, agent $j$ will get infected at $k_1+2$. This concludes that $I(S_{k_1+2})=N\setminus \{i\}$. 
	
	To determine the final infected set, we now distinguish two cases based on whether $\uti_{k_1+2}=i$ or not.
	
	\vspace{2mm}
	\noindent\textbf{Case 2.2.1:}  $\uti_{k_1+2}=i$ \\
	We show that the final infected set will be $N\setminus i$. Since by our assumption $\uti_{k_1+2}=i$ and $i\notin I(S_{k_1+2})$, by Observation \ref{obs_4}, $i\notin I(S_{k_1+3})$. Hence, $I(S_{k_1+3})=N\setminus \{i\}$. We now show that $i$ will not get infected after this. At time point $k_1+2$,
	$$r_i(\hat{S}_{k_1+2})=\frac{(|N_1(\uti)|+1)+a(n-1-|N_1(\uti)|-1)}{(|N_1(\uti)|+1)+(n-1-|N_1(\uti)|-1)a}=1.$$ Therefore, $a_i(\hat{S}_{k_1+2})=\tau$. At $k_1+3$, if $\uti_{k_1+3}=i$, then agent $i$ would not get infected at $k_1+4$ (Observation \ref{obs_4}). On the other hand, if $\uti_{k_1+3}\neq i$ then as $a_i(\hat{S}_{k_1+3})=a_i(\hat{S}_{k_1+2})=\tau$, agent $i$ would remain uninfected at $k_1+4$. Continuing in this manner, we may show that $i$ will  not get infected after this. Thus, $I(S_\infty)=N\setminus \{i\}$.
	
	\vspace{2mm}
	\noindent\textbf{Case 2.2.2:} $\uti_{k_1+2}\neq i$ \\
	We show that the final infected set will be $N$. Since  $I(S_{k_1+2})=N\setminus \{i\}$, $r_i(\hat{S}_{k_1+2})=1$. Moreover as $a_i(S_{k_1+2})=a_i(\hat{S}_{k_1+1})>\tau$ (by Claim \ref{cl_2}) and $\uti_{k_1+2}\neq i$, it follows that $a_i(\hat{S}_{k_1+2})>\tau$. Combining this two we have $a_i(\hat{S}_{k_1+2})r_i(\hat{S}_{k_1+2})>\tau$. Thus, agent $i$ will get infected at $k_1+3$. Hence, $I(S_{k_1+3})=N$ and  $I(S_\infty)=N$. 
	
	\noindent\textbf{Step 2:} We now find the distribution of $I(S_\infty)$. Note that by the definition, $\talpha\leqslant n-1$ and $\balpha\geqslant 2$. First assume that $\talpha+1\leqslant \balpha$. Therefore, by the above cases we have  
	\begin{itemize}
		\item $I(S_\infty)=\{1\}$ if $|N_1(\uti)|\in \{0,\talpha,\talpha+1,\ldots,n-1\}$,
		\item $I(S_\infty)=N_1(\uti)\cup \{1\}$ if $|N_1(\uti)|\in \{1,2,\ldots,\talpha-1\}$.
	\end{itemize}
	Moreover, as $\mathbb{P}$ is uniform, any two subsets of $N$ with same cardinality have the same probability. These observations together  with  Lemma \ref{lem_5} yield 
	
\[
		\mathbb{P}(I(S_\infty)=J)= 
		\begin{cases} 
			1-\frac{\tilde{\alpha}-1}{n} & \text{if } J=\{1\}  , \\ \\
			\frac{1}{n \times \Mycomb[n-1]{m-1}} & \text{if } 
			1\in J \text{ and } |J|=m \text{ where } m\in [2,\talpha], \\ \\
			0 &  \text{ otherwise. }
		\end{cases}
		\]
	
	Now assume that $\talpha+1> \balpha\geqslant 2$. By Case 1 and Case 2, we have
	\begin{enumerate}[(i)]
		\item $I(S_\infty)=\{1\}$ if $|N_1(\uti)|\in \{0,\talpha,\talpha+1,\ldots,n-1\},$
		\item $|I(S_\infty)|=|N_1(\uti)|+1$ with $1\in I(S_\infty)$ if $|N_1(\uti)|\in \{1,2,\ldots,\balpha-2\},$
		\item $|I(S_\infty)|=n$ if $|N_1(\uti)|\in \{\balpha-1,\ldots,\talpha-1\}$ and there is no $i\in N$ such that $k_i=k_1+1$ and $\uti_{k_1+2}=i$, and
		\item $|I(S_\infty)|=n-1$ with $1\in I(S_\infty)$ if $|N_1(\uti)|\in \{\balpha-1,\ldots,\talpha-1\}$ and there is $i\in N$ such that $k_i=k_1+1$ and $\uti_{k_1+2}=i$.
	\end{enumerate}
	
	Since $|N_1|$ follows uniform distribution on $\{0,1,\ldots,n-1\}$ and any two subsets of $N$ with the same cardinality have the same probability, by (i) and (ii) we have 
	
		\[
	\mathbb{P}(I(S_\infty)=J)=
	   \begin{cases}
	    1-\frac{\talpha-1}{n} & \text{if }  J=\{1\},\\ \\
\frac{1}{n \times \Mycomb[n-1]{m-1}} & \text{if } 1\in J \text{ and } |J|=m \text{ where } m\in[2,\balpha-1].	   \end{cases}
	  \]


	We calculate the probability of $|I(S_\infty)|=n-1$. By (iv) we have
	\begin{align*}
		&\mathbb{P}(\uti\mid |N_1(\uti)|\in \{\balpha-1,\ldots,\talpha\} \text{ and } \exists i\neq 1 \text{ such that } k_i=k_1+1 \text{ and } \uti_{k_1+2}=i)\\
		=&\sum_{w=\balpha-1}^{\talpha-1}P(\uti\mid |N_1(\uti)|=w \text{ and } \exists i\neq 1 \text{ such that } k_i=k_1+1 \text{ and } \uti_{k_1+2}=i)\\
		=&\sum_{w=\balpha-1}^{\talpha-1}\sum_{t= w+1}^{\infty}P(\uti\mid |N_1(\uti)|=w \text{ and } k_1=t \text{ and } \exists i\neq 1 \text{ such that } k_i=t+1 \text{ and } \uti_{t+2}=i)\\
		=&\sum_{w=\balpha-1}^{\talpha-1}\sum_{t= w+1}^{\infty} \Mycomb[n-1]{w}\times \left(\frac{w!\stirling{t-1}{w}}{n^{t-1}}\right)\times \frac{1}{n} \times \Mycomb[(n-w-1)]{1}\times \frac{1}{n^2}\\
		=&\frac{(n-1)!}{n^3}\sum_{w=\balpha-1}^{\talpha-1}\frac{1}{(n-w-2)!}\sum_{t= w+1}^{\infty}  \left(\frac{\stirling{t-1}{w}}{n^{t-1}}\right)\\
		=&\eta(\talpha,\balpha,n).        
	\end{align*}
	Note that by (i)-(iv), $$\sum_{m=1}^{\balpha-1}P(|I(S_\infty)|=m)+P(|I(S_\infty)|=n-1)+P(|I(S_\infty)|=n)=1.$$ Therefore,
	\begin{align*}
		P(|I(S_\infty)|=n)&=1-\sum_{m=1}^{\balpha-1}P(|I(S_\infty)|=m)-P(|I(S_\infty)|=n-1)\\
		&=\frac{\talpha-(\balpha-1)}{n}-\eta(\talpha,\balpha,n). 
	\end{align*}
Since any two subsets of $N$ with the same cardinality have the same probability,	combining all the above observations, we have the following distribution of the infected set.

	\[
	\mathbb{P}(I(S_\infty)=J)= 
	\begin{cases} 
		1-\frac{\tilde{\alpha}-1}{n} & \text{if } J=\{1\}  , \\ \\
		\frac{1}{n \times \Mycomb[n-1]{m-1}} & \text{if } 
		1\in J \text{ and } |J|=m \text{ where } m\in[2, \balpha-1], \\ \\
		\frac{\eta(\talpha,\balpha,n)}{n-1} & \text{if } 1\in J \text{ and } |J|=n-1,\\ \\
		\frac{\talpha-(\balpha-1)}{n}-\eta(\talpha,\balpha,n) & \text{if } |J|=n, \text{ i.e., } J=N,\\ \\
		0 &  \text{ otherwise.}
	\end{cases}
	\]


This completes the proof of the theorem.
\end{proof}

\subsection{Proof of Theorem \ref{theo_4}}

\begin{proof} 
	We follow the same structure that we used in the proof of Theorem \ref{theo_1}. 
	
	\noindent \textbf{Step 1.}    Fix an agent sequence  $\uti\in N_\infty$ and let $S$ be the DVSP induced by $\undertilde{v}$. To shorten notation, for all $i \in N$, let us denote $t_i(\uti)$ by $k_i$.  The following  claim demonstrates how an agent $i$ with $k_i<k_1$ will update her action. Recall the set $N_1(\uti)$. We distinguish two cases based on the value of $|N_1(\uti)|$. \\

	\noindent\textbf{Case 1:} $|N_1(\uti)|=0$. \\
	We show that, for $\tau\leqslant \frac{a}{n-1}$, all the agents will get infected under this assumption, i.e., $I(S_\infty)=N$. Note that by the assumption of the case, $\uti_0=1$.   Recall that $\hat{S}_{0}$ denotes the intermediate state where the only change from $S_{0}$ is that agent $\uti_{0}$ has updated her action to $b_{\uti_{0}}(S_{0})$. Since $\uti_{0}=1$, we have  $a_i(S_{0})=a_i(\hat{S}_{0})=a$ for all $i\neq 1$. Moreover, by Remark \ref{rem_0} and the definition of the process,  $a_1(\hat{S}_{0})=1$. Consider the time point $1$. By the definition of the process, an agent $i\neq 1$ will be in $I(S_{1})$ if $a_i(\hat{S}_{0})r_i(\hat{S}_{0})>\tau$. Since $I(S_{0})=\{1\}$, $a_i(\hat{S}_{0})=a$ for all $i\in N$, it follows that for all $i\in N\setminus \{1\}$
	\begin{align*}
		ar_i(\hat{S}_{0})&= \frac{a}{(n-2)a+1}> \frac{a}{(n-1)}.
	\end{align*}
	  Because $\tau\leqslant  \frac{a}{n-1}$, this implies that all the agents in $N\setminus \{1\}$  gets infected at the time point $1$. Hence, $I(S_{1})=N$. Therefore, by the definition of the process $I(S_\infty)=N$.\\

	\noindent\textbf{Case 2:} $|N_1(\uti)|\geqslant  1$.\\ 
	This means $\uti_0\neq 1$. Let $\uti_0=i\in N\setminus 1$. Hence, by the definition of the process, agent $i$ will choose her action as $b_i(S_0)$ at the intermediate state $\hat{S}_0$. As $a_j(S_0)=a> 0$ for all $j\in N$ and $I(S_0)=\{1\}$, it follows that $r_i(S_0)\neq 0$. Therefore,
	\begin{equation}\label{e_1}
		b_i(S_0)=\min \left\{1,\frac{\tau}{r_i(S_0)}\right \}=\min \left \{1,(n-1)\tau\right \}=(n-1)\tau.    
	\end{equation}

	Since by our assumption $\uti_{0}=i$ and $i\notin I(S_{0})$, by Observation \ref{obs_4}, $i\notin I(S_{1})$. For any other uninfected agent $j$, $$r_j(\hat{S}_0)=\frac{a}{(n-2)a+b_i(S_0)}=\frac{a}{(n-2)a+(n-1)\tau}.$$ This together with the fact that $a_j(\hat{S}_0)=a$ implies 
	\begin{enumerate}
		\item if $\tau= \frac{a}{n-1}$ then  $a_j(\hat{S}_0)r_j(\hat{S}_0)=\frac{a}{n-1}= \tau$, and
		\item if $\tau< \frac{a}{n-1}$ then  $a_j(\hat{S}_0)r_j(\hat{S}_0)>\frac{a}{n-1}>\tau$.
	\end{enumerate}
	Combining the above observations, we may write if $\tau= \frac{1}{n-1}$ then agent $j$ will not get infected at time point $1$ and if $\tau< \frac{a}{n-1}$ then agent $j$ will get infected at time point $1$.  Hence, we have
	$$\tau= \frac{a}{n-1} \implies I(S_1)=\{1\} \mbox{ and } \tau< \frac{a}{n-1} \implies I(S_1)=N\setminus \{i\}.$$
	To decide the final outcome, we first assume $\tau= \frac{a}{n-1}$. Note that by (\ref{e_1}), $b_i(S_0)=a$. This means $a_i(S_1)=a$. Moreover, as $\uti_0=i$, we have $a_j(S_1)=a$ for all $j\neq i$. Using similar arguments, we can show that $a_k(S_{k_1})=a$ for all $k\in N$ and $I(S_{k_1})=\{1\}$. By the definition of the process, $a_1(\hat{S}_{k_1})=1$ and $a_k(\hat{S}_{k_1})=a$ for all $k\neq 1$. Therefore, for any $k\neq 1$
	 \begin{align*}
	 	 a_k(\hat{S}_{k_1})r_k(\hat{S}_{k_1})&=\frac{a}{(n-2)a+1} >\frac{a}{(n-1)}=\tau.
	\end{align*}
	 Thus, all the agents other than agent 1 will get infected at $k_1+1$. Hence, $I(S_\infty)=N$.
	 
	Now assume $\tau<\frac{a}{n-1}$. We distinguish two subcases.

	\noindent \textbf{Case 2.1.}  $\uti_1=i$.\\
	We show that the final infected set will be $N\setminus i$. Since by our assumption $\uti_{1}=i$ and $i\notin I(S_{1})$, by Observation \ref{obs_4}, $i\notin I(S_{2})$. Hence, $I(S_{2})=N\setminus \{i\}$. We now show that $i$ will not get infected after this. At time point $2$,
	$$r_i(\hat{S}_{2})=\frac{(n-1)}{(n-1)}=1.$$ Therefore, $a_i(\hat{S}_{2})=\tau$ (see Observation \ref{obs_4}). At time point $3$, if $\uti_{3}=i$, then agent $i$ would not get infected at time point $4$ (Observation \ref{obs_4}). On the other hand, if $\uti_{3}\neq i$ then as $a_i(\hat{S}_{3})=a_i(\hat{S}_{2})=\tau$, it follows that $a_i(\hat{S}_{3})r_i(\hat{S}_3)\leqslant \tau$. Hence, agent $i$ would remain uninfected at time point $4$. Continuing in this manner, we may show that $i$ will  not get infected after this. Thus, $I(S_\infty)=N\setminus \{i\}$.

	\vspace{2mm}
	\noindent\textbf{Case 2.2.:} $\uti_{1}\neq i$ \\
	We show that the final infected set will be $N$. Since  $I(S_{1})=N\setminus \{i\}$, $r_i(\hat{S}_{1})=1$. Moreover, as $a_i(S_{1})=a_i(\hat{S}_{0})=b_i(S_0)=(n-1)\tau>\tau$ (see \ref{e_1}) and $\uti_{1}\neq i$, it follows that $a_i(\hat{S}_{1})>\tau$. Combining this two we have $a_i(\hat{S}_{1})r_i(\hat{S}_{1})>\tau$. Thus, agent $i$ will get infected at time point $2$. Hence, $I(S_{2})=N$ and  $I(S_\infty)=N$.

	\noindent \textbf{Step 2.} First assume $\tau= \frac{a}{n-1}$. Therefore, in view Case 1 and Case 2 of the current proof, we have $I(S_\infty)=N$.

	Now assume $\tau<\frac{a}{n-1}$. By Case 1 and Case 2 above, we have 
	\begin{enumerate}[(i)]
		\item $I(S_\infty)=N\setminus i$ with $1\in I(S_\infty)$  if $|N_1(\uti)|\geqslant 1$ and there is $i\in N\setminus \{1\}$ such that $k_i=0$ and $\uti_{1}=i$, and
		\item $I(S_\infty)=N$  if either $|N_1(\uti)|=0$ or $|N_1(\uti)|\geqslant 1$ and there is no $i\in N\setminus \{1\}$ such that $k_i=0$ and $\uti_{1}=i$. 
	\end{enumerate}
	
	We calculate the probability of $|I(S_\infty)|=n-1$. By (i) we have
	\begin{align*}
		&P(\uti\mid |N_1(\uti)|\geqslant 1 \text{ and } \exists i\neq 1 \text{ such that } k_i=0 \text{ and } \uti_{1}=i)\\
		=&P(\uti\mid  \exists i\neq 1 \text{ such that } k_i=0 \text{ and } \uti_{1}=i)\\
		=&\Mycomb[n-1]{1}\times \frac{1}{n^2}\\
		=&\frac{n-1}{n^2}.
	\end{align*}
	Note that by (i) and (ii), 
	$$P(|I(S_\infty)|=n-1)+P(I(S_\infty)=N)=1.$$ Therefore,
	\begin{align*}
		P(I(S_\infty)=N)&=1-P(|I(S_\infty)|=n-1)\\
		&=1-\frac{n-1}{n^2}. 
	\end{align*}
	Since any two subsets of $N$ with the cardinality $n-1$ have the same probability,	combining all the above observations, we have the following distribution of the infected set. 
	
	\[
	\mathbb{P}(I(S_\infty)=J)= 
	\begin{cases} 
		\frac{1}{n^2} & \text{if } 1\in J \text{ and } |J|=n-1,
		\\ \\
		1-\frac{n-1}{n^2} & \text{if } |J|=n, \text{ i.e., } J=N, \\ \\
		0 & \text{ otherwise. } 
	\end{cases}
	\]    
	This completes the proof of the theorem. 
\end{proof}

\section{A few important lemmas}\label{sec:appendix_D}

\begin{lemma}\label{ac_1}
	Let $\uti\in N_\infty$ and let $S$ be the DVSP induced by $\uti$. Suppose $t_0$ is such that $I(S_{t_0})=I(S_t)$ for all $t\geqslant t_0$ and $a_k(S_{t_0})=1$ for all $k\in I(S_{t_0})$. Then for $i\notin I(S_{t_0})$ and $\bt>t_0$   with $\uti_{\bt}=i$ implies $$a_i(\hat{S}_{\bt})\geqslant a_j(\hat{S}_{\bt}) \text{ for all } j\notin I(S_{t_0}) \mbox{ with } \uti_t=j \text{ for some } t\in (t_0,\bt].$$
\end{lemma}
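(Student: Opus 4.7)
The plan is to fix an arbitrary $j\notin I(S_{t_0})$ that satisfies $\uti_t=j$ for some $t\in(t_0,\bt]$ and to show $a_i(\hat{S}_{\bt})\geqslant a_j(\hat{S}_{\bt})$. The case $j=i$ is immediate, so I assume $j\neq i$ and let $t^{*}$ denote the \emph{last} time in $(t_0,\bt]$ with $\uti_{t^{*}}=j$; since $\uti_{\bt}=i\neq j$, this gives $t^{*}\in(t_0,\bt)$. By Observation~\ref{rem_0.5}, agent $j$'s action is frozen at $b_j(S_{t^{*}})$ from epoch $t^{*}+1$ through $\bt$, hence $a_j(\hat{S}_{\bt})=a_j(S_{\bt})=b_j(S_{t^{*}})$; by Lemma~\ref{rem_0} I also have $a_i(\hat{S}_{\bt})=b_i(S_{\bt})$, so the goal reduces to proving $b_i(S_{\bt})\geqslant b_j(S_{t^{*}})$.

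Combining the hypothesis $a_k(S_{t_0})=1$ for $k\in I(S_{t_0})$ with the action-monotonicity established in Claim~3 of the proof of Lemma~\ref{lem_1} (and the fact that every action lies in $[0,1]$), I obtain $a_k(S_t)=1$ for every $t\geqslant t_0$ and every $k\in I(S_{t_0})$. Writing $I=I(S_{t_0})$, $U=N\setminus I$, and using $g_{lk}=1$, the exposure of any uninfected agent simplifies to $r_k(S)=|I|\big/\bigl(|I|+\sum_{l\in U\setminus\{k\}}a_l(S)\bigr)$. Since $x\mapsto\min\{1,\tau/x\}$ is non-increasing in $x>0$, the desired inequality $b_i(S_{\bt})\geqslant b_j(S_{t^{*}})$ will follow from $r_i(S_{\bt})\leqslant r_j(S_{t^{*}})$, equivalently
\[
\sum_{l\in U\setminus\{i\}}a_l(S_{\bt})\;\geqslant\;\sum_{l\in U\setminus\{j\}}a_l(S_{t^{*}}).
\]
Splitting off the $i$- and $j$-terms on the two sides and using $a_l(S_{\bt})\geqslant a_l(S_{t^{*}})$ for every $l\in U\setminus\{i,j\}$ (by monotonicity, valid since $\bt>t^{*}\geqslant t_0+1$), this reduces to the single algebraic claim $b_j(S_{t^{*}})\geqslant a_i(S_{t^{*}})$.

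The hard part will be precisely this key inequality. If $b_j(S_{t^{*}})=1$ it is vacuous, so I assume $b_j(S_{t^{*}})=\tau/r_j(S_{t^{*}})<1$. Because the infected set is stable from $t_0$ onwards and $i\notin I$, agent $i$ is not newly infected at epoch $t^{*}+1$; combined with $\uti_{t^{*}}=j\neq i$ (so $a_i(\hat{S}_{t^{*}})=a_i(S_{t^{*}})$), this forces $a_i(S_{t^{*}})\,r_i(\hat{S}_{t^{*}})\leqslant\tau$, i.e.\ $a_i(S_{t^{*}})\leqslant\tau/r_i(\hat{S}_{t^{*}})$. Since $\hat{S}_{t^{*}}$ differs from $S_{t^{*}}$ only in $j$'s coordinate (now equal to $b_j(S_{t^{*}})$ instead of $a_j(S_{t^{*}})$), a direct computation using the simplified form of $r_k$ yields
\[
\frac{\tau}{r_i(\hat{S}_{t^{*}})}-\frac{\tau}{r_j(S_{t^{*}})}\;=\;\frac{\tau}{|I|}\bigl(b_j(S_{t^{*}})-a_i(S_{t^{*}})\bigr).
\]
Writing $B=a_i(S_{t^{*}})$ and $C=b_j(S_{t^{*}})=\tau/r_j(S_{t^{*}})$, the infection-avoidance bound becomes $B\leqslant C+(\tau/|I|)(C-B)$, i.e.\ $(B-C)\bigl(1+\tau/|I|\bigr)\leqslant 0$, forcing $B\leqslant C$. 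That is the desired $a_i(S_{t^{*}})\leqslant b_j(S_{t^{*}})$, which closes the chain of inequalities laid out above and completes the proof.
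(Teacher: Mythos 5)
Your proof is correct, but it takes a genuinely different route from the paper's. The paper argues by induction on $\bar{t}$: it isolates the last epoch $\hat{t}<\bar{t}$ at which \emph{some} uninfected agent $j$ updated, invokes the induction hypothesis to obtain $a_j(\hat{S}_{\hat{t}})\geqslant a_i(\hat{S}_{\hat{t}})$, and then pushes this through a chain of equalities to show $\tau/r_j(\hat{S}_{\hat{t}})\leqslant \tau/r_i(\hat{S}_{\bar{t}})$. You dispense with induction entirely: for each competitor $j$ separately you return to $j$'s own last update time $t^{*}$, reduce the claim to $b_i(S_{\bar{t}})\geqslant b_j(S_{t^{*}})$ using the action-monotonicity of Claim~\ref{cl_3}, and establish the residual inequality $a_i(S_{t^{*}})\leqslant b_j(S_{t^{*}})$ from the fact that $i$ is \emph{not} newly infected at epoch $t^{*}+1$, via the exact identity $\tau/r_i(\hat{S}_{t^{*}})-\tau/r_j(S_{t^{*}})=(\tau/|I|)\bigl(b_j(S_{t^{*}})-a_i(S_{t^{*}})\bigr)$. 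This buys you something concrete: the paper's inductive step needs the comparison $a_j(\hat{S}_{\hat{t}})\geqslant a_i(\hat{S}_{\hat{t}})$, which the induction hypothesis only supplies when $i$ itself has already updated during $(t_0,\hat{t}\,]$, whereas your infection-avoidance argument yields the needed bound unconditionally, so it also covers the case where $\bar{t}$ is $i$'s first update after $t_0$. Both arguments rely equally on the simplification $r_k=|I|/\bigl(|I|+\sum_{l\notin I,\,l\neq k}a_l\bigr)$ coming from $g_{lk}=1$ and the frozen actions of the infected set, so nothing in generality is lost; your version is self-contained where the paper's is recursive.
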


\begin{proof}
	We use induction on $\bt$ to prove the lemma. Note that for the base case, that is, for $\bt=t_0+1$, the lemma holds vacuously. Next we introduce an introduction hypothesis.\\
	\noindent\textit{Induction Hypothesis:} Given $\bt\in \mathbb{N}_0$ with $\bt>t_0+1$, the lemma holds for all $t$ with $t_0+1\leqslant t<\bt$.
	
	  We show that the lemma holds for $\bt$. Suppose $\uti_{\bt}=i$ where $i\notin I(S_{t_0})$. If there is no $t\in (t_0,\bt)$ such that  $\uti_t\notin I(S_{t_0})$, the lemma holds vacuously. So, assume that $\hat{t}$ is the last time point before $\bt$ such that $\uti_{\hat{t}}=j$ for some $j\notin I(S_{t_0})$. This, together with the induction hypothesis, implies $a_j(\hat{S}_{\hat{t}})\geqslant a_k(\hat{S}_{\hat{t}})$ for all $k\notin I(S_0)$ with $\uti_t=k$  for some  $t\in (t_0,\hat{t})$. Also, by the definition of the process, $a_l(\hat{S}_{\hat{t}})=a_l(\hat{S}_{\bt})$ for all $l\notin I(S_0)\setminus i$. Therefore, to prove the lemma it is enough to show that $a_i(\hat{S}_{\bt})\geqslant a_j(\hat{S}_{\bt})$. Additionally, as $a_j(\hat{S}_{\bt})\leqslant 1$, we may assume that $a_i(\hat{S}_{\bt})=\frac{\tau}{r_i(\hat{S}_{\bt})}$. Moreover, as $j\notin I(S_0)$, $a_j(\hat{S}_{\hat{t}})\leqslant\frac{\tau}{r_j(\hat{S}_{\hat{t}})}$. Now
	\begin{align}
		\frac{\tau}{r_j(\hat{S}_{\hat{t}})}&=\frac{\tau}{\frac{|I(S_{t_0})|}{|I(S_{t_0})|+\sum_{k\notin I(S_{t_0})\setminus j}a_k(\hat{S}_{\hat{t}})}}\nonumber\\
		&=\frac{\tau[{|I(S_{t_0})|+\sum_{k\notin I(S_{t_0})\setminus j}a_k(\hat{S}_{\hat{t}})}]}{|I(S_{t_0})|} \text{ (as } I(S_{t_0})=I(S_{\hat{t}}) \text{ and for } k\in I(S_{t_0}),\; a_k(S_{\hat{t}})=1 )\nonumber\\
		&=\frac{\tau[{|I(S_{t_0})|+\sum_{k\notin I(S_{t_0})\setminus \{i,j\}}a_k(\hat{S}_{\hat{t}})+a_i(\hat{S}_{\hat{t}})}]}{|I(S_{t_0})|} \nonumber\\
		&\leqslant \frac{\tau[{|I(S_{t_0})|+\sum_{k\notin I(S_{t_0})\setminus \{i,j\}}a_k(\hat{S}_{\hat{t}})+a_j(\hat{S}_{\hat{t}})}]}{|I(S_{t_0})|} \text{ (as } a_j(\hat{S}_{\hat{t}})\geqslant a_i(\hat{S}_{\hat{t}})) \nonumber\\
		&=\frac{\tau[{|I(S_{t_0})|+\sum_{k\notin I(S_{t_0})\setminus \{i,j\}}a_k(\hat{S}_{\bt})+a_j(\hat{S}_{\bt})}]}{|I(S_{t_0})|}  \text{ (as } a_k(\hat{S}_{\hat{t}})=a_k(\hat{S}_{\bt}) \text{ for all } k\notin I(S_0)\setminus i)\nonumber\\
		&=\frac{\tau[{|I(S_{t_0})|+\sum_{k\notin I(S_{t_0})\setminus i}a_k(\hat{S}_{\bt})}]}{|I(S_{t_0})|} \nonumber\\
		&=\frac{\tau}{\frac{|I(S_{t_0})|}{|I(S_{t_0})|+\sum_{k\notin I(S_{t_0})\setminus i}a_k(\hat{S}_{\bt})}} \text{ (as } I(S_{t_0})=I(S_{\bt}) \text{ and for } k\in I(S_{t_0}),\; a_k(S_{\bt})=1 ) \nonumber\\
		&=\frac{\tau}{r_i(\hat{S}_{\bt})}.\label{1}
	\end{align}
(\ref{1}) together with $a_i(\hat{S}_{\bt})=\frac{\tau}{r_i(\hat{S}_{\bt})}$ and $a_j(\hat{S}_{\hat{t}})\leqslant \frac{\tau}{r_j(\hat{S}_{\hat{t}})}$ implies $a_i(\hat{S}_{\bt})\geqslant a_j(\hat{S}_{\hat{t}})$. Hence, $a_i(\hat{S}_{\bt})\geqslant a_j(\hat{S}_{\bt})$.   This completes the proof of the lemma.
\end{proof}

The following lemma provides an important property of the final action limit for both infected and uninfected agents. It shows that an infected agent will have the action limit 1 whereas any two uninfected agents will have the same action limit, that is, for $i,j\notin I(S_\infty)$, $a_i(S_\infty)=a_j(S_\infty)$.  


\begin{lemma}\label{ac_2}
    Let $\uti\in N_\infty$  and let $S$ be the DVSP induced by $\uti$. Then, for $$[k\in I(S_\infty)] \implies [a_k(S_\infty)=1]$$
    and $$[i,j\notin I(S_\infty)]\implies [a_i(S_\infty)=a_j(S_\infty)].$$
\end{lemma}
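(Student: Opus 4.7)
The plan is to treat the two claims separately. For the first claim, fix $k \in I(S_\infty)$. Since $I(S_t)$ is non-decreasing in $t$ and stabilizes at $I(S_\infty)$ (Lemma~\ref{lem_1}), there is some finite $t^*$ with $k \in I(S_{t^*})$. Because $\uti \in N_\infty$, agent $k$ is picked infinitely often, so there exists $t \geqslant t^*$ with $\uti_t = k$; at such a time, Observation~\ref{obs_3} gives $a_k(S_{t'}) = 1$ for all $t' > t$, and hence $a_k(S_\infty)=1$.

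For the second claim, first use the first part together with the finiteness of $N$ to pick $t_0$ large enough that (i) $I(S_{t_0}) = I(S_\infty)$ and (ii) $a_k(S_{t_0}) = 1$ for every $k \in I(S_\infty)$. Both conditions stabilize after finitely many epochs: (i) by Lemma~\ref{lem_1} and finiteness of $N$, and (ii) because for each infected $k$, once $k$ is first picked after becoming infected, Observation~\ref{obs_3} locks its action at $1$ thereafter. Now fix $i,j \notin I(S_\infty) = I(S_{t_0})$. From Claim~3 in the proof of Lemma~\ref{lem_1}, the sequences $(a_i(S_t))_{t \geqslant t_0}$ and $(a_j(S_t))_{t \geqslant t_0}$ are non-decreasing and bounded by $1$, hence converge to $a_i(S_\infty)$ and $a_j(S_\infty)$ respectively.

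To compare the two limits, I will invoke Lemma~\ref{ac_1}, whose hypotheses are exactly (i) and (ii) above. Suppose, for contradiction, that $a_i(S_\infty) > a_j(S_\infty)$. Since $\uti \in N_\infty$ both $i$ and $j$ are picked infinitely often after $t_0$, so we may choose times $s < t$ with $s,t > t_0$, $\uti_s = i$, and $\uti_t = j$. Then Lemma~\ref{ac_1} yields $a_j(\hat{S}_t) \geqslant a_i(\hat{S}_t)$. Since $\uti_t = j \neq i$, Observation~\ref{rem_0.5} gives $a_i(\hat{S}_t) = a_i(S_t) \geqslant a_i(S_{s+1})$ by monotonicity, while $a_j(\hat{S}_t) = a_j(S_{t+1}) \leqslant a_j(S_\infty)$. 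Hence $a_j(S_\infty) \geqslant a_i(S_{s+1})$. Letting $s \to \infty$ along the (infinite) subsequence of times when $i$ is picked, the right-hand side tends to $a_i(S_\infty)$, giving $a_j(S_\infty) \geqslant a_i(S_\infty)$, a contradiction. Swapping the roles of $i$ and $j$ rules out the opposite strict inequality, so $a_i(S_\infty) = a_j(S_\infty)$.

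The main obstacle is really just the second part: one must be careful that Lemma~\ref{ac_1} requires \emph{both} endpoints of the comparison to be uninfected agents that have been picked at some time in $(t_0, \bt]$, so the argument must alternate picks of $i$ and $j$ infinitely many times and exploit monotonicity to pass to the limit. Once the setup step (choosing $t_0$ with $a_k(S_{t_0})=1$ for all $k\in I(S_\infty)$) is in place, the contradiction argument above is routine.
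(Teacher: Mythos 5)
Your proposal is correct and follows essentially the same route as the paper: Observation~\ref{obs_3} for the infected agents, and for the uninfected pair a comparison via Lemma~\ref{ac_1} at times when $j$ is picked after $i$, combined with the monotonicity from Claim~\ref{cl_3} of Lemma~\ref{lem_1} to pass to the limit (the paper phrases this as an $\epsilon$-argument rather than a contradiction, but the content is identical). If anything, you are slightly more careful than the paper in explicitly arranging that $a_k(S_{t_0})=1$ for all $k\in I(S_{t_0})$ before invoking Lemma~\ref{ac_1}, a hypothesis the paper's own proof leaves implicit.
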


\begin{proof}
Let $\uti\in N_\infty$  and let $S$ be the DVSP induced by $\uti$. Consider $k\in I(S_\infty)$. As $\uti \in N_\infty$, agent $k$ appears infinitely many times in $\uti$. And, after getting infected whenever she updates her action, she will choose her action as 1. Thus, $a_k(S_\infty)=1$.
Now consider $i,j \notin I(S_\infty)$. Let $b=a_i(S_\infty)$ and consider $\epsilon>0$. This means there exists $t_0$ such that $a_i(S_t)\geqslant b-\epsilon$ for all $t\geqslant t_0$. Note that as $N$ is a finite set and $I(S_\infty)$ exists, there exists $\tilde{t}\in \mathbb{N}_0$ such that $I(S_{\tilde{t}})=I(S_\infty)$. In view of this, we may assume that $I(S_{t_0})=I(S_\infty)$. Consider a time point $\hat{t}$ such that 
\begin{enumerate}
    \item $\hat{t}> t_0$ and $\uti_{\hat{t}}=j$ and
    \item there exists $\bar{t}\in [t_o,\hat{t}]$ such that $\uti_{\bt}=i$.
\end{enumerate}
 Such a time point $\hat{t}$ exists as $\uti \in N_\infty$. Therefore, by Lemma \ref{ac_1}, $a_j(S_{\hat{t}})\geqslant a_i(S_{\hat{t}})$. As $\hat{t}> t_0$, this means  $a_j(S_{\hat{t}})\geqslant b-\epsilon$. Further, as $I(S_{t_0})=I(S_\infty)$ and $\hat{t}> t_0$, by Claim \ref{cl_3} in Lemma \ref{lem_1}, $a_j(S_t)\geqslant a_j(S_{\hat{t}})$ for all $t\geqslant \hat{t}$. Thus, $a_j(S_t)\geqslant b-\epsilon$ for all $t\geqslant \hat{t}$. Since $\epsilon$ is arbitrary, this gives $a_j(S_{\infty})\geqslant b$. Similarly, we can show that $a_i(S_{\infty})\geqslant a_j(S_{\infty})$. Hence, $a_i(S_{\infty})= a_j(S_{\infty})$. 
\end{proof}

The next lemma determines the common action limit of the uninfected agents. 

 \begin{lemma}\label{2}
 Let $\uti\in N_\infty$  and let $S$ be the DVSP induced by $\uti$. Further, let $\gamma$ be the common action limit of the uninfected agents. Then, $$[(n-1)\tau<|I(S_\infty)|] \implies \left[\gamma=\frac{\tau|I(S_\infty)|}{(1+\tau)|I(S_\infty)|-\tau(n-1)}<1\right],$$ and $$[(n-1)\tau\geqslant |I(S_\infty)|] \implies [\gamma=1].$$
\end{lemma}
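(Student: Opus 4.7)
Let me sketch the proof. Write $m = |I(S_\infty)|$. By Lemma \ref{lem_1}, there exists $t_0 \in \mathbb{N}_0$ with $I(S_t) = I(S_\infty)$ for every $t \geqslant t_0$. By Lemma \ref{ac_2}, $a_k(S_t) \to 1$ for each $k \in I(S_\infty)$ and $a_i(S_t) \to \gamma$ for each $i \notin I(S_\infty)$. Since agent $1$ is infected from the outset, we have $m \geqslant 1$, so for any uninfected $j$ and any $t \geqslant t_0$ the denominator in the definition of $r_j(S_t)$ is positive (the $g_{ij}$'s all equal $1$), and
\[
r_j(S_t) \;=\; \frac{\sum_{k \in I(S_\infty)} a_k(S_t)}{\sum_{k \in N\setminus\{j\}} a_k(S_t)} \;\longrightarrow\; r^* \;:=\; \frac{m}{m + (n-1-m)\gamma}.
\]

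Next I would exploit the best-response rule to get a fixed-point equation for $\gamma$. Fix any $j \notin I(S_\infty)$. Because $\uti \in N_\infty$, the set $T_j := \{t \geqslant t_0 : \uti_t = j\}$ is infinite. For every $t \in T_j$ we have $j \notin I(S_t)$ and $r_j(S_t) > 0$ (for $t$ large enough, by the convergence above, since $r^* > 0$), so Lemma \ref{rem_0} gives
\[
a_j(S_{t+1}) \;=\; a_j(\hat S_t) \;=\; b_j(S_t) \;=\; \min\!\Bigl\{1,\tfrac{\tau}{r_j(S_t)}\Bigr\}.
\]
Letting $t \to \infty$ along $T_j$, the left-hand side tends to $\gamma$ and the right-hand side tends to $\min\{1,\tau/r^*\}$, yielding
\[
\gamma \;=\; \min\!\left\{1, \; \frac{\tau\bigl(m + (n-1-m)\gamma\bigr)}{m}\right\}.
\]

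Finally I would solve this fixed-point equation by splitting into the two cases in the lemma. If $(n-1)\tau \geqslant m$, I would check that $\gamma = 1$ is consistent with the equation (indeed $\tau(n-1)/m \geqslant 1$ forces the $\min$ to equal $1$) and show uniqueness by ruling out $\gamma < 1$: a solution with $\gamma < 1$ would have to satisfy $\gamma = \tau m / [(1+\tau)m - \tau(n-1)]$, but an elementary manipulation shows this value is $\geqslant 1$ precisely when $(n-1)\tau \geqslant m$, a contradiction. If instead $(n-1)\tau < m$, then $\gamma = 1$ would force $\tau(n-1)/m \geqslant 1$, contradicting $(n-1)\tau < m$; hence $\gamma < 1$, the $\min$ is attained at the second argument, and solving the linear equation gives $\gamma = \tau m / [(1+\tau)m - \tau(n-1)]$, which is strictly less than $1$ iff $m > \tau(n-1)$, as required.

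The only delicate point is the subsequence limit argument in the second paragraph: one must argue that $a_j(S_{t+1}) \to \gamma$ holds in particular along $T_j$, and that $r_j(S_t) \to r^*$ along the same subsequence, so that the equality $a_j(S_{t+1}) = \min\{1,\tau/r_j(S_t)\}$ survives in the limit. Both facts follow from the convergence statements established in the first paragraph (which hold over all $t$, not just along $T_j$), together with continuity of the map $x \mapsto \min\{1,\tau/x\}$ on $(0,\infty)$. The remainder of the proof is just algebra on the resulting fixed-point equation.
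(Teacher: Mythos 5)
Your proof is correct and follows essentially the same route as the paper's: both extract the fixed-point equation $\gamma=\min\bigl\{1,\tau\bigl(m+(n-1-m)\gamma\bigr)/m\bigr\}$ from the best-response rule once the infected set has stabilized and all actions have converged, and then solve it by the same two-case analysis on whether $(n-1)\tau$ exceeds $m=|I(S_\infty)|$. The only (harmless) organizational difference is that you pass the full $\min$ through the limit and decide which branch is active afterwards, whereas the paper first pins down the active branch for all large $t$ (invoking the monotonicity of actions from Claim \ref{cl_3} when $(n-1)\tau\geqslant m$); note only that your Case-2 manipulation implicitly uses the fact that a solution $\gamma\in(0,1)$ of the linear equation forces the denominator $(1+\tau)m-\tau(n-1)$ to be positive (since $\gamma\bigl[(1+\tau)m-\tau(n-1)\bigr]=\tau m>0$), which is immediate but worth saying.
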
 
 \begin{proof}
	Let $t_0\in \mathbb{N}_0$ be such that $I(S_{t_0})=I(S_\infty)$ and $a_k(S_{t_0})=1$ for all $k\in I(S_{t_0})$. First assume that $(n-1)\tau<|I(S_\infty)|$. This implies $\frac{\tau}{|I(S_\infty)|}<\frac{1}{n-1}$.  We first show that for any time point $\bt\geqslant t_0$, if $\uti_{\bt}\notin I(S_\infty)$ then $a_{\uti_{\bt}}(\hat{S}_{\bt})<1$. Let $\uti_{\bt}=i$. Since $a_{i}(\hat{S}_{\bt})=\min\{\frac{\tau}{r_i(\hat{S}_{\bt})},1\}$, it is enough to show that $\frac{\tau}{r_i(\hat{S}_{\bt})}<1$.
	\begin{align*}
		\frac{\tau}{r_i(\hat{S}_{\bt})}&=\frac{\tau}{|I(\hat{S}_{\bt})|}  \left(|I(\hat{S}_{\bt})|+\sum_{j\notin I(\hat{S}_{\bt})\cup \{i\}}a_j(\hat{S}_{\bt})\right)\\
		&=\frac{\tau}{|I(S_{\infty})|}  \left(|I(S_{\infty})|+\sum_{j\notin I(S_{\infty})\cup \{i\}}a_j(\hat{S}_{\bt})\right) \text{ (as } I(S_{t_0})=I(S_\infty) \text{ and } \bt\geqslant t_0)\\
		&<\frac{1}{n-1}  \left(|I(S_{\infty})|+\sum_{j\notin I(S_{\infty})\cup \{i\}}a_j(\hat{S}_{\bt})\right) \text{ (as } \frac{\tau}{|I(S_\infty)|}<\frac{1}{n-1})\\
		&\leq1 \text{ (as } a_j(\hat{S}_{\bt})\leqslant 1. \text{ for all } j\notin I(S_\infty\cup \{i\}).
	\end{align*}
	Since $\bt$ is arbitrary, it follows that $a_{i}(\hat{S}_{t})=\frac{\tau}{r_i(\hat{S}_{t})}$ for all $t\geqslant t_0$ with $\uti_t=i$. Hence, 
	\begin{align}
		a_{i}(\hat{S}_{t})&=\frac{\tau}{r_i(\hat{S}_{t})}\nonumber\\
		&=\frac{\tau}{|I(\hat{S}_{t})|}  \left(|I(\hat{S}_{t})|+\sum_{j\notin I(\hat{S}_{t})\cup \{i\}}a_j(\hat{S}_{t})\right)\nonumber\\
		&=\frac{\tau}{|I(S_\infty)|}  \left(|I(S_\infty)|+\sum_{j\notin I(S_\infty)\cup \{i\}}a_j(\hat{S}_{t})\right).\label{1}
	\end{align}
	Taking limit on both the sides of \ref{1}, we have 
	\begin{align*}
		&\gamma=\frac{\tau}{|I(S_\infty)|}  \left(|I(S_\infty)|+\sum_{j\notin I(S_\infty)\cup \{i\}}\gamma\right)\\
		\implies& \gamma=\frac{\tau|I(S_\infty)|}{(1+\tau)|I(S_\infty)|-\tau(n-1)}\\
		\implies& \gamma< \frac{\tau|I(S_\infty)|}{(1+\tau)|I(S_\infty)|-|I(S_\infty)|}=1.\\
	\end{align*}
	
	Now assume $(n-1)\tau\geqslant |I(S_\infty)|$. We have to show that $\gamma=1$. Assume  $\gamma<1$. Consider $i\notin I(S_\infty)$. Since by Claim \ref{cl_3} in Lemma \ref{lem_1}, $a_i(S_t)$ is an increasing sequence for $t>t_0$, $\gamma<1$ implies $a_i(S_t)<1$ for all  $t>t_0$.  This means $a_i(\hat{S}_t)=\frac{\tau}{r_i(\hat{S}_t)}$ for $t>t_0$ with $\uti_t=i$. Therefore, using similar arguments as before we have
	\begin{align*}
		&\gamma=\frac{\tau|I(S_\infty)|}{(1+\tau)|I(S_\infty)|-\tau(n-1)}\\
		\implies& \gamma\geqslant  \frac{\tau|I(S_\infty)|}{(1+\tau)|I(S_\infty)|-|I(S_\infty)|}=1.\\
		\
	\end{align*}
	But this is a contradiction to $\gamma<1$. Therefore, $\gamma=1$.
\end{proof}

\section{Proof of Theorem \ref{thm_ac_1}, Theorem\ref{thm_ac_1.5}, Theorem \ref{thm_ac_3}, and Theorem \ref{thm_ac_4}}\label{sec:appendix_E}
  \subsection{Proof of Theorem \ref{thm_ac_1}}

\begin{proof}
First assume $\tau\geqslant \frac{1}{n-1}$.	We first explore the limiting actions for a fixed agent sequence, and then we use this to find the limiting probability distribution.  Let $\uti$ be an agent sequence and $S$ be the DVSP induced by $\uti$. Note that by Remark \ref{rem_3}, it is enough to assume $\uti\in N_\infty$. Therefore, by Lemma \ref{ac_2}, all the agents outside $I(S_\infty)$ have the same action limit, and all the agents in $I(S_\infty)$ have the action limit 1. Let us denote the common limit by $\gamma$. 
We distinguish two cases based on the value of $N_1(\uti)$ (as in the proof of Theorem \ref{theo_1}) to find $\gamma$. Note that by the assumption of the theorem $\alpha\leqslant n-1$.\\  
 \noindent\textbf{Case 1:} $|N_1(\uti)|\geqslant \alpha$.\\
Recall that for this case the final infected set is $\{1\}$. Moreover, by the assumption of the theorem, $\tau(n-1)\geqslant 1$. Therefore, by Lemma \ref{lem_2}, $\gamma=1$. Hence, $a_{N}(S_\infty)=\vec{1}$.

\noindent\textbf{Case 2:} $|N_1(\uti)|\leqslant \alpha-1$.\\
 Recall that for this case, the final infected set is $N_1(\uti)\cup \{1\}$.  Therefore, by Lemma \ref{lem_2}, if $(n-1)\tau\geqslant |N_1(\uti)|+1$ then $a_{N}(S_\infty)=\vec{1}$, and if $(n-1)\tau<|N_1(\uti)|+1$ then
 
\[
 a_i(S_\infty)=
\begin{cases}
1 & \text{if } i \in I(S_{\infty}),\\
\frac{\tau(|N_1(\uti)|+1)}{(1+\tau)(|N_1(\uti)|+1)-\tau(n-1)} & \text{if } i \notin I(S_{\infty}).
\end{cases}
\]


Recall that $\beta=\min \{\lfloor(n-1)\tau \rfloor+1, \alpha+1\}$. Hence, combining Cases 1 and 2, we have the following:
\begin{enumerate}
	\item    $|N_1(\uti)|+1\in [\beta,\alpha]$ implies  
	
	\[
 a_i(S_\infty)=
\begin{cases}
1 & \text{if } i \in I(S_{\infty}),\\
\frac{\tau(|N_1(\uti)|+1)}{(1+\tau)(|N_1(\uti)|+1)-\tau(n-1)} & \text{if } i \notin I(S_{\infty}).
\end{cases}
\]
	
\item $|N_1(\uti)|+1\in [1,\beta-1]\cup [\alpha+1,n]$ implies $a_{N}(S_\infty)=\vec{1}$. 

\end{enumerate}

Note that (i) implies $a_{N}(S_\infty)\in A_{(|N_1(\uti)|+1)}$ when $|N_1(\uti)|+1\in [\beta,\alpha]$. Also, as $\mathbb{P}$ is uniform, any two vectors in $A_m$, for  $m\in [\beta,\alpha]$, have the same probability. Thus, we have the following distribution
 
 		\[ 
  		\mathbb{P}(a_{N}(S_\infty)=\vec{x})=
  	\begin{cases}
  1-\frac{\alpha-\beta+1}{n}& \text{ if }  \vec{x}=\vec{1}, \\ \\
  	\frac{1}{n \times \Mycomb[n-1]{m-1}} &\text{ if } \vec{x}\in A_m \text{ for some } m\in [\beta,\alpha], \\ \\
    	0 & \text{ otherwise. }
  	  	\end{cases}
    	\] 

 



		Now assume $\tau<\frac{1}{n-1}$. We follow the same structure as in the previous case, that is, we first explore the limiting actions for a fixed agent sequence, and then we use this to find the limiting probability distribution.  Let $\uti$ be an agent sequence and $S$ be the DVSP induced by $\uti$. Note that by Remark \ref{rem_3}, it is enough to assume $\uti\in N_\infty$. Therefore, by Lemma \ref{ac_2}, all the agents outside $I(S_\infty)$ have the same action limit, and all the agents in $I(S_\infty)$ have the action limit 1. Let us denote the common limit by $\gamma$. As by the assumption of the theorem, $(n-1)\tau<1$, we have $\alpha=n$, and hence, $|N_1(\uti)|\leqslant \alpha-1$. Moreover, for $|N_1(\uti)|\leqslant \alpha-1$ (shown in the proof of Theorem \ref{theo_1}), the final infected set is $N_1(\uti)\cup \{1\}$. Thus, $|I(S_\infty)|>(n-1)\tau$. 
Hence, by Lemma \ref{lem_2} if $|N_1(\uti)|+1<n$

\[
 a_i(S_\infty)=
\begin{cases}
1 & \text{if } i \in I(S_{\infty}),\\
\frac{\tau(|N_1(\uti)|+1)}{(1+\tau)(|N_1(\uti)|+1)-\tau(n-1)} & \text{if } i \notin I(S_{\infty}).
\end{cases}
\]


 and if $|N_1(\uti)|+1=n$, $a_{N}(S_\infty)=1$.
	Recall the notation $A_m$. By the above arguments, we have $a_{N}(S_\infty)\in A_{[|N_1(\uti)|+1]}$ when $|N_1(\uti)|+1<n$. Moreover, as $\mathbb{P}$ is uniform, any two vectors in $A_m$, for  $m\in [1,(n-1)]$, have the same probability. Thus, by Theorem \ref{theo_1}, we have the following distribution
	
 		\[ 
  		\mathbb{P}(a_{N}(S_\infty)=\vec{x})=
  	\begin{cases}
 \frac{1}{n}& \text{ if }  \vec{x}=\vec{1}, \\ \\
  	\frac{1}{n \times \Mycomb[n-1]{m-1}} &\text{ if } \vec{x}\in A_m \text{ for some } m\in [1,n-1], \\ \\
    	0 & \text{ otherwise.}
  	
  	\end{cases}
\]	
	
\end{proof}

\subsection{Proof of Theorem \ref{thm_ac_1.5}}

\begin{proof}
	We first explore the limiting actions for a fixed agent sequence, and then we use this to find the limiting probability distribution.  Let $\uti$ be an agent sequence and $S$ be the DVSP induced by $\uti$. Note that by Remark \ref{rem_3}, it is enough to assume $\uti\in N_\infty$. Therefore, by Lemma \ref{ac_2}, all the agents outside $I(S_\infty)$ have the same action limit, and all the agents in $I(S_\infty)$ have the action limit 1. Let us denote the common limit by $\gamma$. First assume $\tau\geqslant \frac{1}{n-1}$. By Theorem \ref{theo_1.5}, $I(S_\infty)=\{1\}$. Therefore, $(n-1)\tau\geqslant |I(S_\infty)|$, and hence, by Lemma \ref{2}, $\gamma=1$. Thus, $a_{N}(S_\infty)=\vec{1}$.

 Now assume that $\tau<\frac{1}{n-1}$. We distinguish two cases based on the value of $N_1(\uti)$ (as in the proof of Theorem \ref{theo_1.5}) to find $\gamma$. \\  
 \noindent\textbf{Case 1:} $|N_1(\uti)|=0$.\\
Recall that for this case the final infected set is $N$. Hence, $a_{N}(S_\infty)=\vec{1}$.

\noindent\textbf{Case 2:} $|N_1(\uti)|\geqslant 1$.\\
 Recall that for this case, the final infected set has  cardinality either $n$ or $n-1$. If cardinality is $n$ then $a_{N}(S_\infty)=\vec{1}$. If cardinality is $n-1$, then as $(n-1)\tau <1$, by Lemma \ref{2}, $\gamma=\tau$. Hence,
 
\[
 a_i(S_\infty)=
\begin{cases}
1 & \text{if } i \in I(S_{\infty}),\\
\tau & \text{if } i \notin I(S_{\infty}).
\end{cases}
\]
Note that this implies $a_{N}(S_\infty)\in A_{n-1}$. Also, as $\mathbb{P}$ is uniform, any two vectors in $A_{n-1}$ have the same probability. Thus, by Theorem \ref{theo_1.5}, we have the following distribution
 		\[ 
  		\mathbb{P}(a_{N}(S_\infty)=\vec{x})=
  	\begin{cases}
   1-\frac{n-1}{n^2} &\text{ if } \vec{x}=\vec{1}, \\ \\
  \frac{1}{n^2}& \text{ if }  \vec{x}\in A_{n-1}, \\ \\
  	0 & \text{ otherwise. }
  	
  	\end{cases}
\] 
\end{proof}

 \subsection{Proof of Theorem \ref{thm_ac_3}}

\begin{proof}
	We first explore the limiting actions for a fixed agent sequence, and then we use this to find the limiting probability distribution.  Let $\uti$ be an agent sequence and $S$ be the DVSP induced by $\uti$. Note that by Remark \ref{rem_3}, it is enough to assume $\uti\in N_\infty$. Therefore, by Lemma \ref{ac_2}, all the agents outside $I(S_\infty)$ have the same action limit, and all the agents in $I(S_\infty)$ have the action limit 1. Let us denote the common limit by $\gamma$. 
	We distinguish two cases based on the value of $N_1(\uti)$ (as in the proof of Theorem \ref{theo_2}) to find $\gamma$.\\  
 \noindent\textbf{Case 1:} $|N_1(\uti)|\geqslant \halpha$.\\
Recall that for this case the final infected set is $\{1\}$. Moreover, by the assumption of the theorem, $(n-1)\tau\geqslant 1$. Therefore, by Lemma \ref{lem_2}, $\gamma=1$. Hence, $a_{N}(S_\infty)=\vec{1}$.

\noindent\textbf{Case 2:} $|N_1(\uti)|\leqslant \halpha-1$.\\
 Recall that for this case, the final infected set is $N_1(\uti)\cup \{1\}$. Note that as $\halpha\leqslant n-1$, $N_1(\uti)\cup \{1\}\leqslant n-1$. Therefore, by Lemma \ref{lem_2}, if $(n-1)\tau\geqslant |N_1(\uti)|+1$ then $a_{N}(S_\infty)=\vec{1}$, and if $(n-1)\tau<|N_1(\uti)|+1$ then
 
\[
 a_i(S_\infty)=
\begin{cases}
1 & \text{if } i \in I(S_{\infty}),\\
\frac{\tau(|N_1(\uti)|+1)}{(1+\tau)(|N_1(\uti)|+1)-\tau(n-1)} & \text{if } i \notin I(S_{\infty}).
\end{cases}
\]


Recall that $\hbeta=\min \{\lfloor(n-1)\tau \rfloor+1,\halpha+1\}$. Thus, combining Cases 1 and 2, we have the following:
\begin{enumerate}
	\item    $|N_1(\uti)|+1\in [\hbeta,\halpha]$ implies
	
	\[
 a_i(S_\infty)=
\begin{cases}
1 & \text{if } i \in I(S_{\infty}),\\
\frac{\tau(|N_1(\uti)|+1)}{(1+\tau)(|N_1(\uti)|+1)-\tau(n-1)} & \text{if } i \notin I(S_{\infty}).
\end{cases}
\]

\end{enumerate}

\noindent Note that (i) implies $a_{N}(S_\infty)\in A_{[|N_1(\uti)|+1]}$ when $|N_1(\uti)|+1\in [\hbeta,\halpha]$. Also, as $\mathbb{P}$ is uniform, any two vectors in $A_m$, for  $m\in [\hbeta,\halpha]$, have the same probability. Thus, we have the following distribution

 		\[ 
  		\mathbb{P}(a_{N}(S_\infty)=\vec{x})=
  	\begin{cases}
  1-\frac{\halpha-\hbeta+1}{n}& \text{ if }  \vec{x}=\vec{1}, \\ \\
  	\frac{1}{n \times \Mycomb[n-1]{m-1}} &\text{ if } \vec{x}\in A_m \text{ for some } m\in [\hbeta,\halpha], \\ \\
    	0 & \text{ otherwise. }
  	\end{cases}
  	  	\]

\end{proof}

 \subsection{Proof of Theorem \ref{thm_ac_4}}
 
 \begin{proof}
 	We first explore the limiting actions for a fixed agent sequence, and then we use this to find the limiting probability distribution.  Let $\uti$ be an agent sequence and $S$ be the DVSP induced by $\uti$. Note that by Remark \ref{rem_3}, it is enough to assume $\uti\in N_\infty$. Therefore, by Lemma \ref{ac_2}, all the agents outside $I(S_\infty)$ have the same action limit, and all the agents in $I(S_\infty)$ have the action limit 1. Let us denote the common limit by $\gamma$. First assume $\talpha+1 < \balpha$. As shown in the proof of Theorem \ref{theo_3}, the infected set is either $\{1\}$ or $|N_1(\uti)|+1$ where $N_1(\uti)\in [1,\talpha]$. Since by the assumption of the theorem, $(n-1)\tau\geqslant 1$, we have $(n-1)\tau\geqslant I(S_\infty)$ when the infected set is $\{1\}$. Therefore, by Lemma \ref{2}, $\gamma=1$ and hence, $a_{N}(S_\infty)=\vec{1}$. On the other hand, if the final infected set is $N_1(\uti)\cup \{1\}$, the limiting action depends on $|N_1(\uti)|$. By Lemma \ref{lem_2}, if $(n-1)\tau\geqslant |N_1(\uti)|+1$ then $a_{N}(S_\infty)=\vec{1}$, and if $(n-1)\tau<|N_1(\uti)|+1$ then
 	
 \[
 a_i(S_\infty)=
\begin{cases}
1 & \text{if } i \in I(S_{\infty}),\\
\frac{\tau(|N_1(\uti)|+1)}{(1+\tau)(|N_1(\uti)|+1)-\tau(n-1)} & \text{if } i \notin I(S_{\infty}).
\end{cases}
\]

    Recall that the following was shown in the proof of Theorem \ref{theo_3} when $\talpha+1 \leqslant \balpha$:
    \begin{itemize}
    	\item $|I(S_\infty)|=1$ if $|N_1(\uti)|\in \{0,\talpha,\talpha+1,\ldots,n-1\}$ and
    	\item $|I(S_\infty)|=|N_1(\uti)|+1$ if $|N_1(\uti)|\in \{1,2,\ldots,\talpha-1\}$.
    \end{itemize}

Recall that $\tbeta=\min \{\lfloor(n-1)\tau \rfloor+1,\talpha+1\}$. 	Therefore, we have the following:
 	\begin{enumerate}
 		\item    $|N_1(\uti)|+1\in [\tbeta,\talpha]$ implies  
 		
 	\[
 a_i(S_\infty)=
\begin{cases}
1 & \text{if } i \in I(S_{\infty}),\\
\frac{\tau(|N_1(\uti)|+1)}{(1+\tau)(|N_1(\uti)|+1)-\tau(n-1)} & \text{if } i \notin I(S_{\infty}).
\end{cases}
\] 
	
 		\item $|N_1(\uti)|+1\in [1,\tbeta-1]\cup [\talpha+1,n]$ implies $a_{N}(S_\infty)=\vec{1}$. 
 		
 	\end{enumerate}
 	
 	Note that (i) implies $a_{N}(S_\infty)\in A_{[|N_1(\uti)|+1]}$ when $|N_1(\uti)|+1\in [\tbeta,\talpha]$. Also, as $\mathbb{P}$ is uniform, any two vectors in $A_m$, for  $m\in [\tbeta,\talpha]$, have the same probability. Thus, we have the following distribution
 	
 		\[ 
  		\mathbb{P}(a_{N}(S_\infty)=\vec{x})=
  	\begin{cases}
  1-\frac{\talpha-\tbeta+1}{n}& \text{ if }  \vec{x}=\vec{1}, \\ \\
  	\frac{1}{n \times \Mycomb[n-1]{m-1}} &\text{ if } \vec{x}\in A_m \text{ for some } m\in [\bbeta,\balpha], \\ \\
    	0 & \text{ otherwise. }
  	  	\end{cases}
  	  	\]

 
  	Now assume $2\leqslant \balpha< \talpha+1$.  
  	Recall that the following was shown in the proof of Theorem \ref{theo_3} when $2\leqslant \balpha< \talpha+1$:
  		\begin{enumerate}[(i)]
  		\item $|I(S_\infty)|=1$ if $|N_1(\uti)|\in \{0,\talpha,\talpha+1,\ldots,n-1\}$,
  		\item $|I(S_\infty)|=|N_1(\uti)|+1$ if $|N_1(\uti)|\in \{1,2,\ldots,\balpha-2\}$,
  		\item $|I(S_\infty)|=n$ if $|N_1(\uti)|\in \{\balpha-1,\ldots,\talpha-1\}$ and there is no $i\in N$ such that $k_i=k_1+1$ and $\uti_{k_1+2}=i$, and
  		\item $|I(S_\infty)|=n-1$ if $|N_1(\uti)|\in  \{\balpha-1,\ldots,\talpha-1\}$ and there is $i\in N$ such that $k_i=k_1+1$ and $\uti_{k_1+2}=i$.
  	\end{enumerate}
  	By the assumption of the theorem, $(n-1)\tau\geqslant 1$ and $\tau<1$. Thus, if $|I(S_\infty)|=1$ we have $(n-1)\tau\geqslant |I(S_\infty)|$, and if $|I(S_\infty)|= (n-1)$ we have $(n-1)\tau< |I(S_\infty)|$. Recall that $\bbeta=\min \{\lfloor(n-1)\tau \rfloor+1,\balpha\}$. Combining all these observations, we may write the following
  	
  	\begin{enumerate}
  		\item    $|I(S_\infty)|\in [\bbeta,\balpha-1]\cup \{n-1\}$ implies  
  		
  		\[
 a_i(S_\infty)=
\begin{cases}
1 & \text{if } i \in I(S_{\infty}),\\
\frac{\tau(|I(S_{\infty})|)}{(1+\tau)(|I(S_{\infty})|)-\tau(n-1)} & \text{if } i \notin I(S_{\infty}).
\end{cases}
\] 
  		
  		
  		\item $|I(S_\infty)|\in [1,\bbeta-1]\cup \{n\}$ implies $a_{N}(S_\infty)=\vec{1}$. 
  \end{enumerate}
  	Note that (i) implies $a_{N}(S_\infty)\in A_{(|I(S_\infty)|)}$ when $|I(S_\infty)|\in [\bbeta,\balpha-1]\cup \{n-1\}$. Also, as $\mathbb{P}$ is uniform, any two vectors in $A_m$, for  $m\in [\bbeta,\balpha-1]\cup \{n-1\}$, have the same probability. Therefore, using Theorem \ref{theo_3}, we have the following distribution
  	
  	\[ 
  		\mathbb{P}(a_{N}(S_\infty)=\vec{x})=
  	\begin{cases}
  	1+\frac{\bbeta-\balpha}{n}-\eta(\talpha,\balpha,n) & \text{ if }  \vec{x}=\vec{1}, \\ \\
  	\frac{1}{n \times \Mycomb[n-1]{m-1}} &\text{ if } \vec{x}\in A_m \text{ for some } m\in [\bbeta,\balpha-1], \\ \\
  	\frac{\eta(\talpha,\balpha,n)}{n-1} &\text{if } \vec{x}\in A_{n-1}, \\ \\
  	0 & \text{ otherwise. }
  	  	\end{cases}
  	  	\]

  	\end{proof}

   \subsection{Proof of Theorem \ref{thm_ac_5}}

\begin{proof}
	We first explore the limiting actions for a fixed agent sequence, and then we use this to find the limiting probability distribution.  Let $\uti$ be an agent sequence and $S$ be the DVSP induced by $\uti$. Note that by Remark \ref{rem_3}, it is enough to assume $\uti\in N_\infty$. Therefore, by Lemma \ref{ac_2}, all the agents outside $I(S_\infty)$ have the same action limit, and all the agents in $I(S_\infty)$ have the action limit 1. Let us denote the common limit by $\gamma$. First assume $\tau= \frac{a}{n-1}$. By Theorem \ref{theo_4}, $I(S_\infty)=N$. Therefore,  by Lemma \ref{ac_2}, $a_{N}(S_\infty)=\vec{1}$.
	
	Now assume that $\tau<\frac{a}{n-1}$. We distinguish two cases based on the value of $N_1(\uti)$ (as in the proof of Theorem \ref{theo_4}) to find $\gamma$. \\  
	\noindent\textbf{Case 1:} $|N_1(\uti)|=0$.\\
	Recall that for this case the final infected set is $N$. Hence, $a_{N}(S_\infty)=\vec{1}$.
	
	\noindent\textbf{Case 2:} $|N_1(\uti)|\geqslant 1$.\\
	Recall that for this case, the final infected set has  cardinality either $n$ or $n-1$. If cardinality is $n$ then $a_{N}(S_\infty)=\vec{1}$. If cardinality is $n-1$, then as $(n-1)\tau <1$, by Lemma \ref{2}, $\gamma=\tau$. Hence,
	
	\[
	a_i(S_\infty)=
	\begin{cases}
		1 & \text{if } i \in I(S_{\infty}),\\
		\tau & \text{if } i \notin I(S_{\infty}).
	\end{cases}
	\]
	Note that this implies $a_{N}(S_\infty)\in A_{n-1}$. Also, as $\mathbb{P}$ is uniform, any two vectors in $A_{n-1}$ have the same probability. Thus, by Theorem \ref{theo_4}, we have the following distribution
	\[ 
	\mathbb{P}(a_{N}(S_\infty)=\vec{x})=
	\begin{cases}
		1-\frac{n-1}{n^2} &\text{ if } \vec{x}=\vec{1}, \\ \\
		\frac{1}{n^2}& \text{ if }  \vec{x}\in A_{n-1}, \\ \\
		0 & \text{ otherwise. }
		
	\end{cases}
	\] 
\end{proof}

\end{appendix}


\end{document}